 \def\hbeta{\hat{\beta}}                \def\hbbeta{\hat{\bfsym \beta}}
 \def\hxi{\hat{\xi}}                    \def\hbxi{\hat{\bfsym {\xi}}}
\newcommand \ti{\tilde}
\newcommand \mbu{\mathbf{u}}
\newcommand \blam{\boldsymbol{\lambda}}
\newcommand \btt{\boldsymbol{\beta}}
\newcommand \hbt{\hat{\btt}}
\newcommand \bttc{\boldsymbol{\beta}^*}
\newcommand \tbt{\tilde{\btt}}
\newcommand{\indep}{\perp \!\!\! \perp}
\def\T{{ \mathrm{\scriptscriptstyle T} }}
\def\##1\#{\begin{align}#1\end{align}}
\def\$#1\${\begin{align*}#1\end{align*}}
\def\T{{ \mathrm{\scriptscriptstyle T} }} 
\newcommand{\Rom}[1]{\text{\uppercase\expandafter{\romannumeral #1\relax}}}
\newcommand*{\rom}[1]{\expandafter\@slowromancap\romannumeral #1@}
\newcommand{\Rmnum}[1]{\expandafter\@slowromancap\romannumeral #1@}
\newcommand{\scolor}[1]{{\color{magenta}#1}}
\newcommand{\gcomment}[1]{\scolor{$\dagger$}\marginpar{\tiny\scolor{WY:\ #1}}\hspace{-3pt}}
\newcommand{\bfsym}[1]{\ensuremath{\boldsymbol{#1}}}
 \def\hbeta{\widehat{\beta}}
 \providecommand{\abs}[1]{\left\lvert#1\right\rvert}
\def\bDelta {\bfsym {\Delta}}
\definecolor{Gray}{gray}{0.9}
\newtheorem{lem}{Lemma}
\newtheorem{dfn}{Definition}
\newtheorem{thm}{Theorem}
\newtheorem{cond}{Condition}
\newtheorem{rmk}{Remark}
\newtheorem{prop}{Proposition}
\newcommand{\scad}{\textnormal{SCAD}}
\newcommand{\mcp}{\textnormal{MCP}}
\begin{document}

\title{ A provable two-stage algorithm for penalized hazards regression}     

\author{Jianqing Fan\thanks{Department of Operations Research and Financial Engineering, Princeton University, Princeton, New Jersey, 08544; E-mail: \texttt{jqfan@princeton.edu}.  Research supported by NSF grants DMS-1662139 and DMS-1712591, and NIH grant 5R01-GM072611-15.},~ Wenyan Gong\thanks{Department of Operations Research and Financial Engineering, Princeton University, Princeton, New Jersey, 08544; E-mail: \texttt{wenyang@princeton.edu}.  Research supported by NIH grant 5R01-GM072611-13.}, and Qiang Sun\thanks{Department of Statistical Sciences, University of Toronto, 100 St. George Street, Toronto, ON M5S 3G3, Canada; E-mail: \texttt{qsun@utstat.toronto.edu}.}}


\date{ }

\maketitle

\vspace{-0.25in}

\begin{abstract}
From an optimizer's perspective, achieving the global optimum for a general nonconvex problem is often provably NP-hard using the classical worst-case analysis. In the case of Cox's proportional hazards model, by taking  its statistical model structures  into account, we identify local strong convexity near  the global optimum, motivated  by which we propose to use two convex programs to optimize the folded-concave penalized Cox's proportional hazards regression. Theoretically, we investigate the statistical and computational tradeoffs of the proposed algorithm and establish the strong oracle property of the resulting estimators.  
Numerical studies and a real data analysis lend further support to our algorithm and theory.
\end{abstract}
\noindent
{\bf Keywords}:  Cox's proportional hazards model, counting process, NP-hardness, nonconvexity, sparsity,  survival analysis, variable selection.

\section{Introduction}\label{sec:1}


An important goal of survival analysis is to identify possible risk factors or to evaluate treatment effects in epidemiological studies and clinical trials. To mitigate possible confounding bias, often a large number of covariates, such as clinical variables, imaging phenotypes and genetic markers, are collected and modeled, making the number of covariates far larger than that of observations. 
For inferential tractability and interpretability, a popular approach is to consider the regularized Cox's proportional hazards regression 
\#\label{eq:main}
\hat{\bbeta}=\argmin_{\bbeta\in\mathbb{R}^p}\left\{\cL(\bbeta)+\cP_{\lambda}(\bbeta)\right\},
\#
where $\cL(\bbeta)$ is the negative log partial likelihood function depending on the observed data (to be introduced later) and $\cP_{\lambda}(\bbeta)$ is  a penalty function.


For linear models, there have been a surge of work on penalized regressions in the past two decades. When $\cP_{\lambda}(\cdot)$ is taken to be the $\ell_0$-pseudo norm,
\eqref{eq:main} 
corresponds to the best subset selection with different information criterions. For example, it includes the $C_p$-statistics,  Akaike information criterion,  Bayesian information criterion, minimum description length, and  risk inflation criterion as special cases. Though the $\ell_0$-regularized regression maybe preferred statistically,  it is discrete and thus is  NP-hard to solve \citep{huo2007when}. To alleviate the computational challenge,  some work has been conducted  during the last two decades focusing on convex relaxations. A popular choice is the Lasso  penalty \citep{tibshirani1996regression}. 
In spite of the computational efficiency convex procedures may bring, \cite{fan2001variable} observed that convex surrogates  introduce non-negligible estimation biases. Nonconvex penalties, such as the SCAD \citep{fan2001variable} penalty or MCP \citep{zhang2010nearly},  have been proposed to eliminate the estimation bias for large coefficients and to attain refined statistical rate of convergence under conditions on the minimal signal strength. Theoretical properties have been achieved for the hypothetical global optima
(or some local optima) \citep{fan2001variable, kim2008smoothly, zhang2012general} which is not guaranteed to be achieved by a practical algorithm, such as the coordinate descent algorithm developed by \cite{breheny2011coordinate}. 

A natural question to ask is: is it possible to design a polynomial-time algorithm that can achieve the global optimum? \cite{chen2017strong} give a negative answer by providing a worst-case complexity analysis and showing that problem \eqref{eq:main} with a general convex loss function  and a folded concave penalty is strongly NP-hard. In other words, there does not exist a fully polynomial-time approximation algorithm for solving \eqref{eq:main} with nonconvex penalties. Yet empirical studies have suggested that coordinate descent algorithms for nonconvex penalized regression work well, even better than those for the Lasso  problems. 
So practice seems to contradict theory.

\cite{fan2018lamm} takes a step towards understanding this paradox by adapting the statistical analysis to the algorithmic framework. They propose a sequence of  convex relaxation programs to approximate the original nonconvex optimization problem, and analyzing the statistical properties of the approximate solutions produced by these convex programs.  They show that their  obtained estimator can achieve the oracle rate as if the global optimum of the original nonconvex problem could be obtained. Numerical studies suggest that the proposed algorithm works more stable than the coordinate descent algorithm by \cite{breheny2011coordinate}.

A closer examination at the geometry around the true regression coefficient $\bbeta^*$ demonstrates that the original nonconvex regression problem becomes  (strictly) convex in a locally restricted neighborhood of $\bbeta^*$:
\begin{align}\label{eq:ncvx:2}
\big\{\bbeta~\textnormal{is sparse} ~\textnormal{and}~\|\bbeta-\bbeta^*\|_1\leq r\big\},
\end{align}
for $r$ sufficiently small, provided the loss function satisfies the local sparse strong convexity condition such that the convexity of the loss function can dominate the concavity of penalty function. Therefore, if we run an iterative algorithm starting from an initial estimator in the region of \eqref{eq:ncvx:2}, the algorithm will finally converge to the (unique) global optimum. This observation suggests a two-stage optimization algorithmic framework for any nonconvex problem sharing a similar landscape:
\begin{enumerate}
\item In the first stage, run a convex relaxation to find a good initial estimator in the locally restricted neighborhood of the underlying true regression parameter (we refer to this stage  as the burn-in stage);
\item In the second stage, run an iterative algorithm that can keep the solution sequence in the locally restricted neighborhood until convergence (to the global optimum).
\end{enumerate} 
\cite{fan2018lamm}'s algorithm happens to fall in this two-stage algorithmic framework as the first local linear approximation problem is used to burn in while all the remaining  $\ell\geq 2$ local linear approximation problems are used for global convergence. This encourages us to investigate the properties of this two-stage algorithmic framework for estimating the Cox's proportional hazards regression model.

Inspired by the above intuition, we propose to directly optimize the second stage using the local adaptive majorization principle instead of adopting a sequence of convex programs. This helps to further reduce the  iteration complexity of the algorithm developed in \cite{fan2018lamm}. Theoretically, we prove the oracle properties of obtained estimators. Numerical studies suggest that our proposed algorithms work more stable than directly running the coordinate descent, thanks to the burn-in stage of the algorithm.

The rest of paper proceeds as follows. In Section \ref{sec:method}, we introduce the Cox's proportional hazards model and a two-stage algorithm called TLAMM. In Section \ref{sec:eigen}, we discuss some conditions on the localized eigenvalues of the Hessian matrix and prove the localized property of Cox's Hessian.
Section \ref{sec:twostage} establishes the statistical and algorithmic property  of the estimator resulting from TLAMM. Numerical experiments are used to examine the finite-sample performance of  the proposed algorithm in Section \ref{sec:simu}.  In Section \ref{sec:data}, we apply TLAMM to The Cancer Genome Atlas (TCGA) skin cutaneous melanoma dataset to study the genes that are associated with the survival of melanoma patients. Section \ref{sec:dis} concludes the paper with a brief discussion. \\

\noindent{\bf Related Work}: \cite{tibshirani1997Lasso} proposed to use the Lasso  penalty in Cox's model for simultaneous parameter estimation and variable selection. \cite{gui2005penalized} used the least angle regression  \citep{efron2004least}  algorithm to compute the Lasso  estimator in Cox's model  and applied to a microarray gene expression dataset. To reduce the bias of  the Lasso  estimator, \cite{fan2002variable} applied the SCAD regularization to Cox's model which allows the addition of an frailty term and showed that the resulting estimator achieved the oracle performance when the dimensionality is fixed. Later, \cite{zhang2007adaptive} proposed to use adaptive Lasso  to estimate the Cox's model, which achieved a similar oracle rate without nonconvexity issues. \cite{zou2008note}  proposed a path-based variable selection method which is consistent for variable selection and efficient in estimation with a proper choice of shrinkage parameter. \cite{wang2009hierarchically} developed an effective and flexible method for group selection in Cox's model. In \cite{antoniadis2010dantzig}, the authors 
studied the statistical properties of the Dantzig selector in Cox's model. \cite{du2010penalized} studied the Cox's model with semiparametric relative risk and proposed a procedure where two penalties are sequentially applied to achieve the oracle property. In \cite{bradic2011regularization}, they focus on Cox's model with ultrahigh dimensionality and establish strong oracle property for  nonconcave penalized methods. \cite{huang2013oracle} and \cite{kong2014non} both studied the oracle inequalities of Lasso  estimators in Cox's model under different set of conditions. \\

\noindent{\bf Notation:} We summarize the notations that will be used throughout the paper. Bold font is used for all vectors and matrices. For any vector $\bu = (u_1, \ldots, u_d)^\T \in \RR^p$ and $q \geq 1$, $\norm{\bu}_q=\big(\sum_{j=1}^p |u_j|^q\big)^{1/q}$ is the $\ell_q$ norm. For any vectors $\bu , \bv\in \RR^p$, we write $\langle \bu, \bv \rangle = \bu^\T \bv$. Moreover, we let $\norm{\bu}_0 = \sum_{j=1}^p \mathds{1}_{\{u_j \neq 0 \}}$ to denote the number of nonzero entries of $\bu$, and set $\|\bu\|_\infty=\max_{1\leq j\leq p}|u_j|$. For two sequences of real numbers $\{ a_n \}_{n\geq 1}$ and $\{ b_n \}_{n\geq 1}$, $a_n \lesssim b_n$ denotes $a_n \leq C b_n$ for some constant $C>0$ independent of $n$, $a_n \gtrsim b_n$ if $b_n \lesssim a_n$, and $a_n \asymp b_n$ signifies that $a_n \lesssim b_n$ and $b_n \lesssim a_n$. If $\bA$ is an $m\times n$ matrix, we use $\| \bA \|_q$ to denote its order-$q$ operator norm, defined by $\| \bA \|_q = \max_{ \bu \in \RR^n} \norm{\bA \bu}_q/\norm{\bu}_q$. For a set $S$, we use $\abs{S}$ to denote its cardinality. 


\section{Methodology}\label{sec:method}

Let $T,\, C\in\mathbb{R}$ be the observed follow-up time and the censoring time respectively.  Denote by $Z=\min\{T,C\}$ the observed failure time and by $\delta=I(T\leq C)$ the censoring indicator. We assume the following censoring mechanism: $T$ and $C$ are independent given the covariates $\bx\in \RR^p$. Suppose we have collected $\left\{(\bX_i,Z_i,\delta_i): i=1,2,...,n\right\}$ such that they are i.i.d copies of $(\bX,Z,\delta)$.

Let $\lambda(t|\bx)$ be the conditional hazard rate function at time $t$ given the covariates $\bX=\bx$. The Cox's proportional hazards model assumes that 
\begin{equation}\label{cox:notime}
\lambda(t|\bx):=\lim\limits_{\Delta_t\rightarrow0} P\left(t<T<t+\Delta_t\vert T>t,\bx\right)/\Delta_t=\lambda_0(t)\exp(\bx^\top\bbeta),
\end{equation}
where  $\bbeta$ is the vector of log hazard ratios (HR) and $\lambda_0(\cdot)$ is the baseline hazard function. Since some covariates can be time-dependent such as age, weight and blood pressure, we consider  the time-varying version of \eqref{cox:notime} by assuming
\begin{equation}
\lambda(t|\bX(t))=\lambda_0(t)\exp(\bbeta^{\top}\bX(t)).
\end{equation}
In this paper, we consider  the general case where the covariates can be   left-continuous.

Following \cite{fleming1991counting}, the negative log-partial likelihood is
\begin{equation}\label{eqloss1}
\cL(\bbeta)=\frac{1}{n}\sum\limits_{i=1}^n\int_{0}^{\infty}\left\{\bbeta^{\top}\bX_i(t)-\log\left(\sum\limits_{i=1}^n Y_i(t)\exp\left(\bbeta^{\top}\bX_i(t)\right)\right)\right\}\mathrm{d}N_i(t),
\end{equation}
where $N_i(t)=\mathds{1}\{Z_i\leq t, \delta_i=1\}$ is the counting process corresponding to the observed failures 
 and $Y_i(t)=\mathds{1}\{Z_i\geq t\}$ is the at-risk process.



In high dimensions, we  consider the following  penalized partial-likelihood estimation problem 
\begin{equation}\label{eqloss}
\widehat\bbeta=\argmin_{\bbeta}\left\{ \cL(\bbeta)+\sum\limits_{k=1}^p p_{\lambda}\left(\abs{\beta_k}\right)\right\},
\end{equation}
where $p_{\lambda}(\cdot)$ is a  folded-concave penalty function such as SCAD and MCP, and  $\lambda$ is a non-zero regularization parameter.
We assume that the underlying regression coefficient vector $\bbeta^*$ is sparse  with support set $\cO$ such that $\abs{\cO}=s$.



%



From the computational perspective, minimizing the nonvex penalized loss function \eqref{eqloss} is challenging  due to its intrinsic nonconvex structure.  \cite{chen2017strong} exploited the worst-case analysis to show that solving \eqref{eqloss} with a general convex loss function and a nonconvex penalty such as the SCAD and MCP  is strongly NP-hard, indicating that, in general, there does not exist a polynomial-time algorithm for solving \eqref{eqloss}.
However, empirical  studies have suggested that various algorithms, such as the local linear approximation  \citep{zou2008one} and the coordinate descent  \citep{breheny2011coordinate}, perform favorably despite the nonconvexity issues.

We examine this paradox closely in this section by looking at the landscape of the loss function  around the true regression coefficient vector $\bbeta^*$.  Obviously, it is impossible to estimate $\bbeta^*$ without further conditions: the Hessian matrix $\nabla^2\cL(\bbeta)$ is singular in high dimensions, rendering non-identifiability issues.  A common remedy  is to assume some local invertibility condition of the Hessian matrix \citep{candes2007dantzig,bickel2009simultaneous}. Roughly speaking, we assume that, for some working sparsity $\bar s$ and some radius $r$, the Hessian matrix is sparsely invertible in the following local zone 
\$
\cC(\bar s, r):= \left\{  \bbeta:  \|\bbeta\|_0\leq \bar s, \, \|\bbeta-\bbeta^*\|_1\leq r \right\},
\$
or more precisely, there exists some $\rho_* > 0$ such that
\$
\min_{ \|\ub\|_0\leq \bar s,\bbeta \in \cC(\bar s,r)} \left\{\frac{\ub^\T \nabla^2 \cL(\bbeta)\ub}{\ub^\T\ub}\right\}\geq \rho_*.
\$

When the nonconvex penalty  is properly tuned such that $\rho_*$ is larger than the maximum concavity of $p_\lambda(\cdot)$, problem \eqref{eqloss} becomes convex in the local restricted region of $\cC(\bar s, r)$. For example, if $p_\lambda(\cdot)$ is the SCAD penalty, we have 
\$
\max_{\beta}\{ -p''_{\lambda}(\beta)\}=\frac{1}{a-1}.
\$
By taking $a>1+1/\rho_*$, problem \eqref{eqloss} becomes sparse strongly convex for any $\bbeta\in\cC(\bar{s}, r)$. We define a shifted loss function as in \cite{loh2017support} such that $\ti{\cL}(\bbeta)=\cL(\bbeta)+p_{\lambda}(\bbeta)-\lambda\norm{\bbeta}_1$. To proceed, we rewrite the objective function as
\begin{equation}
\cL(\bbeta)+p_{\lambda}(\bbeta)=\ti{\cL}(\bbeta)+\lambda\norm{\bbeta}_1,
\label{eq:shiftedloss}
\end{equation}
 which is  convex in $\cC(\bar s,r)$.  Therefore, if we can propose an algorithm that starts  from an initialization in $\cC(\bar s, r)$ and keeps the solution sequence in $\cC(\bar s, r)$, then optimizing  \eqref{eqloss} is equivalent to optimizing a convex problem. A natural question is: How shall we find a good initialization $\widehat\bbeta^1$ such that $\widehat\bbeta^1\in \cC(\bar s, r)$? This can be done by solving a convex relaxation of problem \eqref{eqloss} - the Lasso problem, which is the first stage of the algorithm:
\#\label{eq:Lasso }
\widehat\bbeta^1=\argmin_{\bbeta}\left\{ \cL(\bbeta)+\sum\limits_{k=1}^p \lambda\abs{\beta_k}\right\}.
\#
Note that this optimization is the same as the local linear approximation  \citep{zou2008one} to the problem \eqref{eqloss} starting at the initial value $0$.
The first stage is a convex problem and thus can be solved efficiently. Then starting from $\widehat\bbeta^1$ we can optimize \eqref{eqloss} directly. In both stages, we apply the Local Adaptive Majorize-Minimization (LAMM) algorithm \citep{fan2018lamm} to solve the corresponding optimization problem. The algorithm is thus referred to as the Two-stage LAMM (TLAMM) algorithm.   Figure \ref{fig:1} shows an illustration of the TLAMM algorithm.

\begin{figure}[t!]
\centering
\includegraphics[scale=0.35]{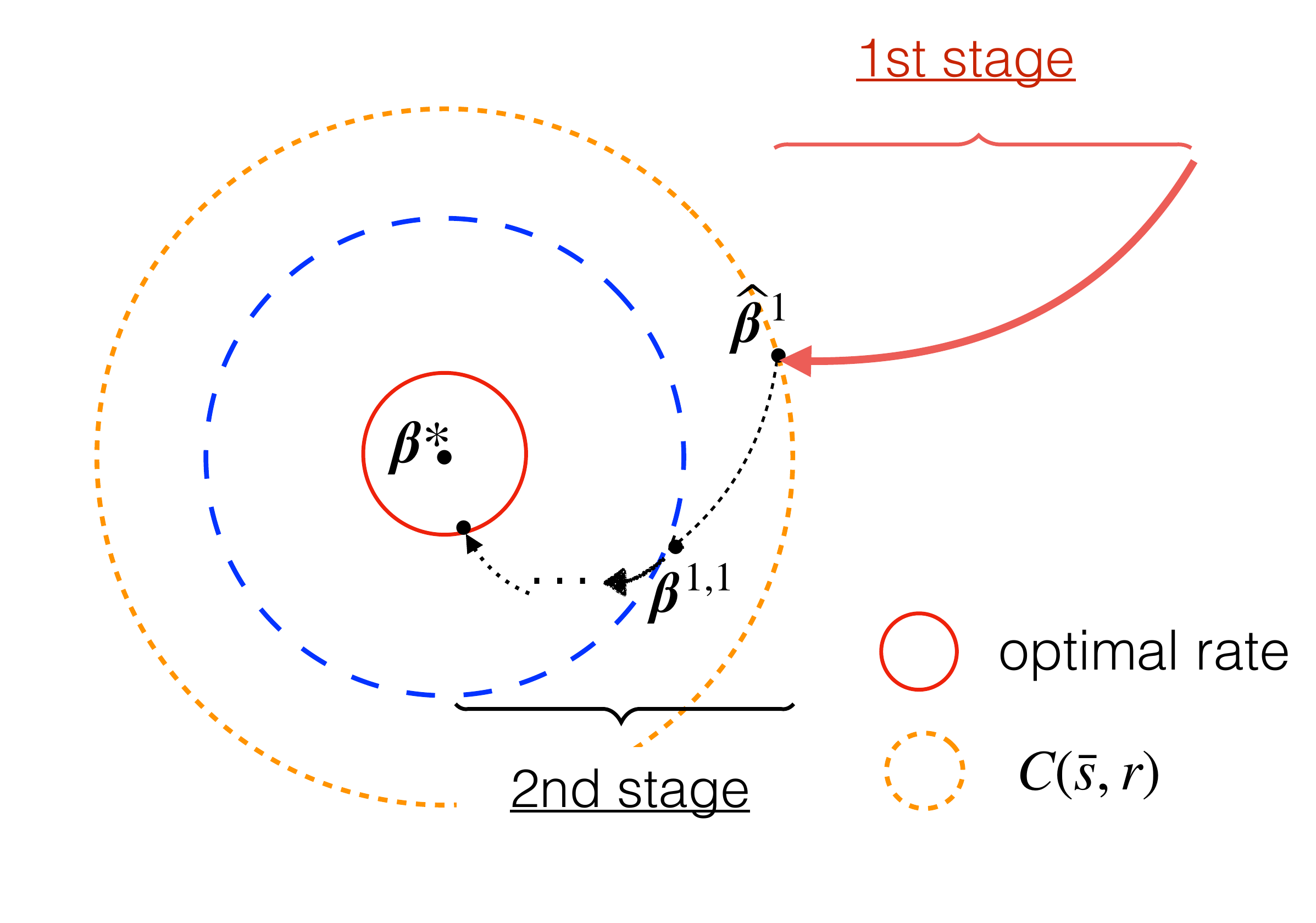}
\caption{An illustration of TLAMM.}
\label{fig:1}
\end{figure}

To fix idea, we describe the second stage in details. The idea also applies to the first stage.  At any working solution $\bbeta^{2,k}$, we locally majorize $\ti{\cL}(\bbeta)$ at $\bbeta^{2,k}$ by the isotropic quadratic function
\begin{equation}
\Psi_{\ti{\cL},\phi^{2,k+1}}(\bbeta,{\bbeta}^{2,k}):=\ti{\cL}({\bbeta}^{2,k})+\left\langle\nabla\ti{\cL}({\bbeta}^{2,k}),\bbeta-{\bbeta}^{2,k}\right\rangle+\frac{\phi^{2,k+1}}{2}\norm{\bbeta-{\bbeta}^{2,k}}_2^2,
\label{eq:psi}
\end{equation}
with $\phi^{2,k+1}$ chosen (to be discussed below) such that the next-step update $\bbeta^{2,k+1}$ satisfies
\#\label{eq:ls}
\Psi_{\ti{\cL},\phi^{2,k+1}}(\bbeta^{2,k+1},{\bbeta}^{2,k})\geq \ti{\cL}(\bbeta^{2,k+1}).
\#
With the majorization \eqref{eq:psi}, the next-step update $\bbeta^{2,k+1}$ is given by
\begin{equation}\label{eq:minimizer}
\bbeta^{2,k+1}=\argmin_{\bbeta\in \RR^d} \left\{\ti{\cL}({\bbeta}^{2,k})+  \left\langle\nabla\ti{\cL}({\bbeta}^{2,k}),\bbeta-{\bbeta}^{2,k}\right\rangle+\frac{\phi^{2,k+1}}{2}\norm{\bbeta-{\bbeta}^{2,k}}_2^2+ \lambda\norm{\bbeta}_1  \right\}.
\end{equation}
The solution to \eqref{eq:minimizer} has a closed-form updating rule:
\#\label{eq:st}
\bbeta^{2,k+1}=S\left({\bbeta}^{2,k}-\frac{1}{\phi^{2,k+1}}\nabla\ti{\cL}\big({\bbeta}^{2,k}\big),\lambda/\phi^{2,k+1}\right)=:T_{\ti{\cL},\lambda,\phi^{2,k+1}}(\bbeta^{2,k}),
\#
and $S(x,\lambda)=\text{sign}(x)\cdot\max\left(\abs{x}-\lambda,0\right)$ is the soft-thresholding operator.

The quadratic coefficient $\phi^{2,k+1}$ can be chosen by a line-search method \citep{beck2009fast}. We can start from a small factor $\phi_0=10^{-4}$, solve for  \eqref{eq:st} and plug it into \eqref{eq:ls} to check whether the local majorization condition hold. If yes, the algorithm outputs $\phi^{2,k+1}=\phi_0$ and the corresponding $\bbeta^{2,k+1}$ ; otherwise, we inflate its value by multiplying a fixed scale $\gamma_u>1$ and repeat the above steps until the local majorization condition hold.  Such a solution always exists since (2.7) with a sufficiently large $\phi$ will eventually majorize $\ti{\cL}(\bbeta)$. 
Following \cite{fan2018lamm}, we refer to this line search method as the LAMM algorithm, which is summarized in the box of Algorithm \ref{alg:ls}. 


By repeating the LAMM algorithm,  a sequence of solutions $\{{\bbeta}^{2,k}:k=0,1,2,...\}$ are generated.  To stop the algorithm, we make use of the first order optimality condition. According to \eqref{eqloss} and \eqref{eq:shiftedloss}, if $\hat{\bbeta}^{2}$ is the second-stage minimizer, it must satisfy
\begin{equation}
\nabla\ti{\cL}(\widehat{\bbeta}^{2})+\lambda\bxi={\bf 0}\quad\text{for}\quad\bxi\in\partial \norm{\hbbeta^2}_1.
\end{equation}
Inspired by this, we stop the optimization algorithm  when
\begin{equation}
\omega_{\lambda}(\bbeta^{2,k}):=\min\limits_{\bxi\in\partial \norm{\bbeta^{2,k}}_1}\left\{\norm{\nabla\ti{\cL}(\bbeta^{2,k})+\lambda\bxi}_{\infty}\right\}\leq\varepsilon,
\label{eq:stopping}
\end{equation}
where $\varepsilon$ is a prefixed optimization error. We call $\ti{\bbeta}^2:=\bbeta^{2,k}$ an $\varepsilon$-optimal solution. A detailed description of TLAMM could be found in the box of Algorithm \ref{alg:tlamm}.   
\begin{remark}
We prove  in Lemma \ref{lem:stopping2} that there exists constants $C>0$ such that
\$
	\omega_{\lambda} (\btt^{\ell,k})\leq C\big\|\btt^{\ell,k}-\btt^{\ell, k-1}\big\|_2.
	\$
for $\ell=1$ and $2$. Thus, in practice,  we stop the algorithm when consecutive solutions are close enough.
\end{remark}


\begin{algorithm}[!t]\label{alg:1}
    \caption{The LAMM algorithm  in the $k$-th iteration of the $\ell$-th stage ($\ell = 1, 2$). }\label{alg:ls}
    \begin{algorithmic}[1]
        \STATE{{\bf Algorithm}: $\{\btt^{\ell,k+1},\phi^{\ell,k+1} \} \leftarrow \mbox{LAMM}(\ell,\lambda,\btt^{\ell,k}, \phi_{0},\phi^{\ell,k}) $  }
        \STATE{\textbf{Input}: $\ell,\lambda, \btt^{\ell,k}, \phi_{0}, \phi^{\ell,k}$   }
        \STATE{\textbf{Initialize}: $\phi^{\ell,k+1}\leftarrow \max\{\phi_{0},\gamma_u^{-1}\phi^{\ell,k}\}$  }
        \STATE{\textbf{Repeat}}
        \STATE{$\quad{}$ \textbf{If} $\ell=1$\textbf{ then } $\btt^{\ell,k+1}\leftarrow T_{\cL,\lambda,\phi^{\ell,k+1}}(\btt^{\ell,k})$ }
        \STATE{$\quad\quad{}$ \textbf{If} $\cL(\bbeta^{\ell,k+1})> \Psi_{\cL,\phi^{\ell,k+1}}(\btt^{\ell,k+1};\btt^{\ell,k})$ \textbf{ then } $\phi^{\ell,k+1}\leftarrow \gamma_u\phi^{\ell,k+1}$ }
        \STATE{$\quad{}$ \textbf{If} $\ell=2$ \textbf{ then } $\btt^{\ell,k+1}\leftarrow T_{\ti{\cL},\lambda,\phi^{\ell,k+1}}(\btt^{\ell,k})$ }
        \STATE{$\quad\quad{}$ \textbf{If} $\ti{\cL}(\bbeta^{\ell,k+1})> \Psi_{\ti{\cL},\phi^{\ell,k+1}}(\btt^{\ell,k+1};\btt^{\ell,k})$ \textbf{ then } $\phi^{\ell,k+1}\leftarrow \gamma_u\phi^{\ell,k+1}$ }
        \STATE{ \textbf{Until}  $\cL(\btt^{\ell,k+1})\leq \Psi_{\cL,\lambda,\phi^{\ell,k+1}}(\btt^{\ell,k+1};\btt^{\ell,k})$ \textbf{If} $\ell=1$}
        \STATE{ $\quad\quad\text{ }\text{ }$$\ti{\cL}(\btt^{\ell,k+1})\leq \Psi_{\ti{\cL},\lambda,\phi^{\ell,k+1}}(\btt^{\ell,k+1};\btt^{\ell,k})$ \textbf{If} $\ell=2$}
        \STATE{\textbf{Return} $\{\btt^{\ell,k+1},\phi^{\ell,k+1}\}$}
    \end{algorithmic}
\end{algorithm}

\begin{algorithm}[ht]
	\caption{ The TLAMM algorithm.}\label{alg:tlamm}
	\begin{algorithmic}[1]
		\STATE{\textbf{Algorithm}: $\ti{\btt}^{2} \leftarrow \text{I-LAMM}(\lambda, \btt^{1,0})$ }
		\STATE{\textbf{Input}: $\phi_{0}>0$}
		\FOR {$k=0, 1, 2, \cdots ~\text{until}~ \norm{\bbeta^{1,k}-\bbeta^{1,k+1}}_2$ is sufficiently small}
		\STATE{$\{\btt^{1,k+1},\phi^{1,k+1} \} \leftarrow \mbox{LAMM}(1,\lambda,\btt^{1,k}, \phi_{0}, \phi^{1,k}) $ }
		\ENDFOR
		\STATE{\textbf{Set}: $\bbeta^{2,0}=\ti{\bbeta}^{1}=\bbeta^{1,k+1}$}
		\FOR {$k=0, 1, 2, \cdots ~\text{until}~ \norm{\bbeta^{2,k}-\bbeta^{2,k+1}}_2$ is sufficiently small}
		\STATE{$\{\btt^{2,k+1},\phi^{2,k+1} \} \leftarrow \mbox{LAMM}(2,\lambda,\btt^{2,k}, \phi_{0}, \phi^{2,k}) $ }
		\ENDFOR		
	     \STATE{\textbf{Output}: $\widetilde\bbeta^{2}=\bbeta^{2, k+1}$ }
	\end{algorithmic}
\end{algorithm}

\begin{section}{Localized Sparse Eigenvalues}\label{sec:eigen}

In this section, we study the local geometry of the Cox's loss function by introducing the localized sparse eigenvalue (LSE) and the corresponding condition. We also verify that the loss function of Cox's model satisfies an LSE condition which suggests the localized strong convexity around $\bbeta^*$. 
Recall that $\norm{\bbeta^*}_0=s$.

\begin{dfn}(Localized Sparse Eigenvalue (LSE))
The maximum and minimum localized sparse eigenvalues of $\cL(\cdot)$ are defined as
\begin{gather*}
\rho_+(m,r)=\sup\limits_{\bu,\bbeta}\left\{\bu_{\texttt{J}}^{\top}\nabla^2\cL(\bbeta)\bu_{\texttt{J}}:\norm{\bu}_2^2=1,\norm{\bu}_0\leq m,\norm{\bbeta-\bbeta^*}_1\leq r\right\};\\
\rho_-(m,r)=\inf\limits_{\bu,\bbeta}\left\{\bu_{\texttt{J}}^{\top}\nabla^2\cL(\bbeta)\bu_{\texttt{J}}:\norm{\bu}_2^2=1,\norm{\bu}_0\leq m,\norm{\bbeta-\bbeta^*}_1\leq r\right\}.
\end{gather*}
\end{dfn}

We define the maximum and minimum localized sparse eigenvalues of $\ti{\cL}(\cdot)$ in the same way and denote them as $\kappa_+(m,r)$ and $\kappa_-(m,r)$. Due to the concavity of the term $p_{\lambda}(\bbeta)-\lambda\norm{\bbeta}_1$, we have $\kappa_-(m,r)\leq \rho_-(m,r)\leq \kappa_+(m,r)\leq\rho_+(m,r)$.

\begin{cond}\label{cond:lse}
We say the LSE condition holds if, for some given constant $C_1$ and radius $r$,  there exists an integer $\tilde{s}\gtrsim s$   such that
\begin{gather*}
0<\rho_*\leq\kappa_-(2s+2\tilde{s},r)\leq\rho_+(2s+2\tilde{s},r)\leq\rho^*<+\infty,\\
\tilde{s}/s> C_1\rho^2_+({2s}+2\tilde{s},r)/\kappa^2_-(2s+2\tilde{s},r).
\end{gather*}
\end{cond}


Empirically $\ti{s}$ is of the same order as $s$. Our next theorem suggests that Condition \ref{cond:lse} holds with high probability when the regularization is properly parametrized.
\begin{thm} Suppose $\{\bX_i(t),Y_i(t),t\geq 0\}$ are i.i.d. processes from $\{\bX(t),Y(t),t\geq 0\}$ with  $\mathbb{P}\{\sup\limits_t\norm{\bX_i(t)}_{\infty}\leq M\}=1$ for a constant $M>0$. Assume that the maximum event time $t^*<+\infty$ and let $r_*=\mathbb{E}\left[Y(t^*)\exp(\bbeta^{*\top}\bX(t^*))\right]$. Then, for any $s'\leq p$, we have
\begin{gather*}
\rho_+(s',r)\leq \exp(4rM)\left\{C_+(s')+4s'M^2\left[\left(1+\Lambda_0(t^*)\right)L_n\left(\frac{p(p+1)}{\varepsilon}\right)+\frac{2}{r_*}\Lambda_0(t^*)t_{n,p,\varepsilon}^2\right]\right\},\\
\rho_-(s',r)\geq \exp(-4rM)\left\{C_-(s')-4s'M^2\left[\left(1+\Lambda_0(t^*)\right)L_n\left(\frac{p(p+1)}{\varepsilon}\right)+\frac{2}{r_*}\Lambda_0(t^*)t_{n,p,\varepsilon}^2\right]\right\}
\end{gather*}
hold with probability at least $1-\exp(-nr_*^2/(8M^2))-2\varepsilon$. 
 Here, $\Lambda_0(t)=\int_0^t \lambda_0(u)\mathrm{d}u$ is the cumulative baseline hazard function, $C^*\geq C_+(s')\geq C_-(s')\geq C_*>0$ where $C^*$ and $C_*$ are two constants depending on $\bbeta^*$, $L_n(t)=\sqrt{(2/n)\log(t)}$ and $t_{n,p,\varepsilon}$ is the solution to $p(p+1)\exp(-nt_{n,p,\varepsilon}^2/(2+2t_{n,p,\varepsilon}/3))=\varepsilon/3$.
\label{thm:minse}
\end{thm}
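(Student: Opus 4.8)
The plan is to write the Hessian of the Cox partial likelihood as a time-integral of a risk-weighted covariance matrix, reduce the perturbation $\bbeta$ to the anchor $\bbeta^*$ at the cost of the $e^{\pm 4rM}$ factors, and then control the empirical Hessian at $\bbeta^*$ entrywise against its population limit, whose sparse eigenvalues are $C_\pm(s')$. For Step~1, set $S^{(0)}(\bbeta,t)=n^{-1}\sum_j Y_j(t)e^{\bbeta^\top\bX_j(t)}$ and define $S^{(1)},S^{(2)}$ analogously with extra factors $\bX_j$ and $\bX_j\bX_j^\top$. Differentiating \eqref{eqloss1} twice gives $\nabla^2\cL(\bbeta)=\int_0^\infty V(\bbeta,t)\,d\bar N(t)$, where $\bar N=n^{-1}\sum_i N_i$ and $V(\bbeta,t)=S^{(2)}/S^{(0)}-(S^{(1)}/S^{(0)})(S^{(1)}/S^{(0)})^\top$ is the covariance of $\bX_j(t)$ under weights $w_j=Y_j(t)e^{\bbeta^\top\bX_j(t)}$. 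The identity I would exploit is $\bu^\top V(\bbeta,t)\bu=\tfrac12\big(\sum_j w_j\big)^{-2}\sum_{j,k} w_jw_k\big(\bu^\top(\bX_j(t)-\bX_k(t))\big)^2\ge 0$.

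For Step~2 (reduction to $\bbeta^*$), because $\|\bbeta-\bbeta^*\|_1\le r$ and $\|\bX_j(t)\|_\infty\le M$, each ratio $w_j(\bbeta,t)/w_j(\bbeta^*,t)=e^{(\bbeta-\bbeta^*)^\top\bX_j(t)}\in[e^{-rM},e^{rM}]$. Substituting into the pairwise-difference identity, the numerator changes by a factor in $[e^{-2rM},e^{2rM}]$ and the squared denominator by a factor in $[e^{-2rM},e^{2rM}]$, so pointwise in $t$ we obtain $e^{-4rM}\,\bu^\top V(\bbeta^*,t)\bu\le \bu^\top V(\bbeta,t)\bu\le e^{4rM}\,\bu^\top V(\bbeta^*,t)\bu$. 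Integrating against $d\bar N\ge 0$ eliminates $\bbeta$ and supplies the $e^{\pm 4rM}$ prefactors; it then remains to bound the $s'$-sparse quadratic form of $\nabla^2\cL(\bbeta^*)$.

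For Step~3 (population target and first-order concentration), I use the Doob--Meyer decomposition $dN_i=Y_ie^{\bbeta^{*\top}\bX_i}\lambda_0\,dt+dM_i$ to split $\bu^\top\nabla^2\cL(\bbeta^*)\bu=\int_0^{t^*}\bu^\top V(\bbeta^*,t)\bu\,S^{(0)}(\bbeta^*,t)\lambda_0(t)\,dt+\int_0^{t^*}\bu^\top V(\bbeta^*,t)\bu\,d\bar M(t)$. In the first (compensator) term $V\cdot S^{(0)}=S^{(2)}-S^{(1)}S^{(1)\top}/S^{(0)}$, and replacing $S^{(0)},S^{(2)}$ by their limits $s^{(0)},s^{(2)}$ produces the information matrix $\bar\Sigma=\int_0^{t^*}\big(s^{(2)}-s^{(1)}s^{(1)\top}/s^{(0)}\big)\lambda_0\,dt$, whose maximal and minimal $s'$-sparse eigenvalues are exactly $C_+(s')$ and $C_-(s')$. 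Since each entry of $V$ has modulus at most $M^2$ (a covariance of $[-M,M]$-valued coordinates), Hoeffding's inequality controls $S^{(2)}-s^{(2)}$ and the linear deviation of $S^{(0)}$; a union bound over the $\le p(p+1)/2$ entries gives the $L_n(p(p+1)/\varepsilon)$ contribution, while integrating against $\lambda_0\,dt$ on $[0,t^*]$ produces the $(1+\Lambda_0(t^*))$ prefactor. To invert $S^{(0)}$ I would work on $\{\inf_{t\le t^*}S^{(0)}(\bbeta^*,t)\ge r_*/2\}$; because $Y_i$ is non-increasing this reduces to a lower-tail bound for $S^{(0)}(\bbeta^*,t^*)$ about $r_*=s^{(0)}(\bbeta^*,t^*)$, which holds with probability $\ge 1-e^{-nr_*^2/(8M^2)}$ and also explains the factor $2/r_*=1/(r_*/2)$.

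Step~4 is where I expect the main obstacle. The delicate piece is the quadratic cross term $S^{(1)}S^{(1)\top}/S^{(0)}$ together with the martingale integral. For the cross term, each coordinate of $S^{(1)}-s^{(1)}$ concentrates at a Bernstein rate, and the \emph{product} of two such deviations is of order $t_{n,p,\varepsilon}^2$, while $1/S^{(0)}\le 2/r_*$ on the good event and the time integral contributes $\Lambda_0(t^*)$; this is precisely the $\tfrac{2}{r_*}\Lambda_0(t^*)t_{n,p,\varepsilon}^2$ term, with $t_{n,p,\varepsilon}$ defined by $p(p+1)\exp(-nt^2/(2+2t/3))=\varepsilon/3$ after a union bound over $p(p+1)$ entry-pairs. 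The martingale integral, handled entrywise as $\int_0^{t^*}V_{kl}(\bbeta^*,t)\,d\bar M(t)$, is mean-zero with predictable variation $n^{-1}\int_0^{t^*}V_{kl}^2\,S^{(0)}\lambda_0\,dt\lesssim n^{-1}\Lambda_0(t^*)$ (using $|V_{kl}|\le M^2$), to which I apply a Bernstein-type inequality for counting-process martingales; the non-predictability of $V(\bbeta^*,t)$ forces me first to pass to its left-continuous version and argue the modification is negligible. Finally, for any symmetric $D$ one has $\sup_{\|\bu\|_2=1,\|\bu\|_0\le s'}|\bu^\top D\bu|\le s'\max_{k,l}|D_{kl}|$, so multiplying the per-entry compensator and Bernstein deviations by $s'$ assembles the bracketed error absorbed in $4s'M^2[\cdots]$; reattaching the $e^{\pm 4rM}$ factors and collecting the failure probabilities ($e^{-nr_*^2/(8M^2)}$ from the $S^{(0)}$ event and $2\varepsilon$ from the two concentration union bounds) yields both inequalities. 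The crux will be the predictable-version argument for the martingale term and maintaining a $t$-uniform lower bound on $S^{(0)}$, which is what keeps the $1/S^{(0)}$ and quadratic-variation factors finite.
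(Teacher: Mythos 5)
Your proposal is correct in outline and would deliver a bound of the stated form, but the core concentration step takes a genuinely different route from the paper's. The paper works with the Hessian written as $\int_0^{t^*}\hat{\bG}_n(s,\bbeta)\,\mathrm{d}\Lambda_0(s)$ and exploits the purely algebraic identity $\bG_n=\hat{\bG}_n+\{\bar{\bX}_n-\bmu\}^{\otimes 2}$: the piece $\int_0^{t^*}\bG_n\,\mathrm{d}\Lambda_0$ is an average of i.i.d.\ bounded matrices handled by Hoeffding plus a union bound (the $L_n$ term), while the correction $\{\bar{\bX}_n-\bmu\}^{\otimes 2}$ is dominated by $\int_0^{t^*}\bDelta^{\otimes 2}\,\mathrm{d}\Lambda_0/R_n^2(t^*,\bbeta)$ and controlled entrywise as a degenerate V-statistic (Lemma \ref{lem:Vstat}), which is exactly where $t_{n,p,\varepsilon}^2$ and the factor $2/r_*$ originate; no martingale appears anywhere. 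You instead start from $\int V\,\mathrm{d}\bar N$ and apply Doob--Meyer, which forces you to (i) linearize the compensator's cross term $S^{(1)}S^{(1)\top}/S^{(0)}$ --- your ``product of two Bernstein deviations'' is the paper's V-statistic $\bDelta^{\otimes 2}=(S^{(1)}-\bmu S^{(0)})^{\otimes 2}$ in disguise, so that part matches --- and (ii) control an extra stochastic integral $\int_0^{t^*}V_{kl}(\bbeta^*,t)\,\mathrm{d}\bar M(t)$ that the paper's route never produces. Point (ii) is workable (left-continuity of the covariates and of $Y_i$ makes $V(\bbeta^*,\cdot)$ predictable, and a Bernstein inequality for counting-process martingales gives a per-entry deviation of order $\sqrt{\Lambda_0(t^*)\log(p/\varepsilon)/n}$), but it injects a third error term of the same order as $L_n$ and a third slice of failure probability, so you recover the theorem only up to constants rather than with the exact bracket $4s'M^2[(1+\Lambda_0(t^*))L_n+2\Lambda_0(t^*)t_{n,p,\varepsilon}^2/r_*]$ and probability $1-\exp(-nr_*^2/(8M^2))-2\varepsilon$ as stated. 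On the other hand, your Step 2 (the pairwise-difference identity yielding the $e^{\pm 4rM}$ comparison) is a clean self-contained proof of the paper's Lemma \ref{lem:neighborhood}, and your closing eigenvalue-perturbation step via $s'\max_{k,l}|D_{kl}|$ is precisely the paper's Lemma \ref{lem:se}; what your approach buys is that it applies to the actual Hessian $\int V\,\mathrm{d}\bar N$ without presupposing the compensated representation, at the cost of the additional martingale analysis.
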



For any $s'\leq p$, if $n\asymp s'^2\log p$, then $L_n\left(p(p+1)/\varepsilon\right)\asymp1/s'$ and $t_{n,p,\varepsilon}\lesssim1/s'$. Take $s'=2s+2\ti{s}$ and treat the cumulative baseline hazard $\Lambda_0(t^*)$ as a constant. Theorem \ref{thm:minse} implies that if $n\geq C(2s+2\tilde{s})^2\log p$ for a sufficiently large constant $C$, then with high probability, $\rho_-(2s+2\tilde{s},r)$ is lower bounded by $C_*\exp(-4rM)/2$ and $\rho_+(2s+2\tilde{s},r)$ is upper bounded by $2C^*\exp(4rM)$. As long as the second order derivative of the regularization $p'_{\lambda}(\cdot)$ is larger than $-C_*\exp(-4rM)/2$, $\kappa_-(2s+2\ti{s})$ is also bounded below by a positive value. Meanwhile, for large enough $\tilde{s}$, $\rho_+(2s+2\tilde{s},r)/\kappa_-(2s+2\tilde{s},r)$ remains constant as $\tilde{s}$ grows.  Thus there always exists an $\tilde{s}$ such that $\tilde{s}/s\geq C_1\rho^2_+({2s}+2\tilde{s},r)/\kappa^2_-(2s+2\tilde{s},r)$. Therefore, Condition \ref{cond:lse} holds with high probability.



\end{section}

\section{Theoretical Results}\label{sec:twostage}

To present the main theorem, we first need a condition on the folded concave penalty function.

\begin{cond}\label{cond:concave}
The penalty function satisfies 
\begin{enumerate}
\item $p'_{\lambda}(\cdot)$ is a non-increasing continuous function defined on $[0,\infty)$;
\item $\lim_{x\rightarrow 0+}p'_\lambda(x)=\lambda$;
\item There exists a constant $a_1>0$ such that $p'_{\lambda}(\beta)=0$ when $\beta>a_1\lambda$.
\end{enumerate}
\end{cond}

Condition \ref{cond:concave} holds for two mainstream folded concave penalties: SCAD and MCP. For SCAD penalty, $a_1=a$; for MCP, $a_1=\gamma$. Here, both $a$ and $\gamma$ are user-picked parameters associated with the penalty functions, e.g. $a=3.7$ for SCAD and $\gamma=3.0$ for MCP are suggested in the papers where they were first proposed. 


\subsection{Statistical Properties}

In TLAMM, we iteratively use LAMM to solve an optimization problem with localized linear approximation in (\ref{eq:Lasso }) in the first stage and then directly optimize the penalized loss function (\ref{eqloss}) in the second stage. In this section, we prove statistical theory for the first-stage $\varepsilon_1$-optimal estimator $\ti{\bbeta}^{1}$ and the second-stage $\varepsilon_2$-optimal estimator $\ti{\bbeta}^{2}$ in Cox's model. Indeed, when  $\varepsilon_1$ and $\varepsilon_2$ are chosen properly, we shall prove that $\ti{\bbeta}^{1}$ is within $\cC(s+\ti{s},r)$ with high probability, based on which the second-stage estimator $\ti{\bbeta}^2$ can achieve the oracle property.

\begin{prop}\label{prop:contraction}
Suppose that Condition \ref{cond:lse} holds. If $\lambda,\varepsilon_1$ and $r$ satisfy
\begin{equation} \label{fan1}
2(\norm{\nabla(\cL(\bbeta^*))}_{\infty})+\varepsilon_1\leq\lambda\leq s^{-1}r\rho_*/36,
\end{equation}
any $\varepsilon_1$-optimal solution $\tilde{\bbeta}^1$ satisfies
\begin{equation}
\norm{\tilde{\bbeta}^1-\bbeta^*}_2\leq 18\rho_*^{-1}\lambda\sqrt{s}\quad\text{and}\quad\norm{\ti{\bbeta}^1-\bbeta^*}_1\leq 36\rho_*^{-1}\lambda s.
\label{eq:stage1}
\end{equation}
\end{prop}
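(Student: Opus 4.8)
Write $\Delta = \ti{\bbeta}^1 - \bbeta^*$ and $\mu = \norm{\nabla\cL(\bbeta^*)}_\infty$. The plan is to run the classical restricted‑eigenvalue analysis of the Lasso, adapted on two fronts: the stationarity condition is only approximate (an $\varepsilon_1$‑optimal, not exact, minimizer), and the curvature input is only the \emph{localized} sparse eigenvalue of Condition~\ref{cond:lse}, valid on $\cC(2s+2\ti{s},r)$ rather than globally. First I would extract the approximate KKT condition: since $\ti{\bbeta}^1$ is $\varepsilon_1$‑optimal there is a subgradient $\bxi\in\partial\norm{\ti{\bbeta}^1}_1$ with $\norm{\nabla\cL(\ti{\bbeta}^1)+\lambda\bxi}_\infty\le\varepsilon_1$. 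Pairing this residual with $\Delta$, using the global convexity of the Cox negative log‑partial‑likelihood $\cL$ (so that $\langle\nabla\cL(\ti{\bbeta}^1)-\nabla\cL(\bbeta^*),\Delta\rangle\ge 0$), the Hölder bound $|\langle\nabla\cL(\bbeta^*),\Delta\rangle|\le\mu\norm{\Delta}_1$, and the decomposition of $\langle\bxi,\Delta\rangle$ over the support $\cO$ (which gives $\langle\bxi,\Delta\rangle\ge\norm{\Delta_{\cO^{c}}}_1-\norm{\Delta_{\cO}}_1$), I obtain the basic inequality $(\lambda-\mu-\varepsilon_1)\norm{\Delta_{\cO^{c}}}_1\le(\lambda+\mu+\varepsilon_1)\norm{\Delta_{\cO}}_1$. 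The lower bound $2\mu+\varepsilon_1\le\lambda$ in \eqref{fan1} makes the left coefficient nonnegative and confines $\Delta$ to a cone $\norm{\Delta_{\cO^{c}}}_1\le c_0\norm{\Delta_{\cO}}_1$ with $c_0=(\lambda+\mu+\varepsilon_1)/(\lambda-\mu-\varepsilon_1)$; crucially this step uses only global convexity, so it holds before $\Delta$ is known to be small.

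Next comes localization. The localized sparse eigenvalue only controls $\nabla^2\cL(\bbeta)$ for $\norm{\bbeta-\bbeta^*}_1\le r$, so I cannot yet invoke strong convexity at $\ti{\bbeta}^1$. I would run a rescaling/peeling argument: replace $\Delta$ by $\Delta_\eta=\eta\Delta$ with $\eta\in(0,1]$ chosen so that $\norm{\Delta_\eta}_1=\min(r,\norm{\Delta}_1)$. Because the cone is scale invariant, $\Delta_\eta$ obeys the same cone, and the segment $\{\bbeta^*+t\Delta_\eta:t\in[0,1]\}$ stays inside $\cC(2s+2\ti{s},r)$, so the eigenvalue bound applies along it. Running the $\ell_2$ estimate below on $\Delta_\eta$ and invoking the upper bound $\lambda\le s^{-1}r\rho_*/36$ shows $\norm{\Delta_\eta}_1<r$, which is incompatible with $\norm{\Delta_\eta}_1=r$; hence $\eta=1$ and $\norm{\Delta}_1\le r$, so all subsequent bounds hold at $\ti{\bbeta}^1$ itself.

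With the segment confined to the local zone, I would convert the sparse eigenvalue into a restricted eigenvalue over the cone. Writing $\langle\nabla\cL(\ti{\bbeta}^1)-\nabla\cL(\bbeta^*),\Delta\rangle=\int_0^1\Delta^\top\nabla^2\cL(\bbeta^*+t\Delta)\Delta\,\mathrm{d}t$ and partitioning $\Delta_{\cO^{c}}$ into blocks of size $\ti{s}$ ordered by magnitude, the standard block argument bounds the cross terms by $\rho_+(2s+2\ti{s},r)$ and yields $\Delta^\top\nabla^2\cL(\cdot)\Delta\gtrsim\big(\rho_-(2s+2\ti{s},r)-c\,\rho_+(2s+2\ti{s},r)\sqrt{s/\ti{s}}\big)\norm{\Delta}_2^2$; since $\rho_-\ge\kappa_-\ge\rho_*$, the sparsity‑inflation inequality $\ti{s}/s>C_1\rho_+^2/\kappa_-^2$ of Condition~\ref{cond:lse} guarantees the bracket is bounded below by a fixed multiple of $\rho_*$. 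Combining the resulting restricted strong convexity with the basic inequality and $\norm{\Delta_{\cO}}_1\le\sqrt{s}\,\norm{\Delta}_2$ gives $\rho_*\norm{\Delta}_2^2\lesssim(\lambda+\mu+\varepsilon_1)\sqrt{s}\,\norm{\Delta}_2\le 2\lambda\sqrt{s}\,\norm{\Delta}_2$, hence the $\ell_2$ bound; feeding it back through the cone, $\norm{\Delta}_1\le(1+c_0)\norm{\Delta_{\cO}}_1\le(1+c_0)\sqrt{s}\,\norm{\Delta}_2$, produces the $\ell_1$ bound. Carrying the explicit values of $c_0$ and of the restricted‑eigenvalue constant through these two lines yields the stated factors $18$ and $36$.

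The main obstacle is the restricted‑eigenvalue conversion together with its interaction with localization: Condition~\ref{cond:lse} supplies only sparse curvature on a bounded $\ell_1$‑ball, whereas $\Delta$ is dense and a priori of unknown size, so the sparsity inflation by $\ti{s}$ and the peeling argument must be orchestrated so that the $\ell_1$‑radius the bound certifies is itself below $r$. Everything else—the approximate‑KKT manipulation, the cone inequality, and the final Cauchy–Schwarz bookkeeping—is routine once this curvature‑on‑a‑cone estimate is in place.
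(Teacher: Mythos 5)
Your plan is correct in substance, but it is worth noting that the paper does not actually carry out the argument you describe: for the $\ell_2$ bound it simply cites Lemma~5.1 of \cite{fan2018lamm}, and the only displayed work is the short $\ell_1$ step, namely the cone inequality $\|(\ti{\bbeta}^1-\bbeta^*)_{S^c}\|_1\le\|(\ti{\bbeta}^1-\bbeta^*)_{S}\|_1$ from Lemma~\ref{lem:akkt1} followed by Cauchy--Schwarz. What you have written is essentially a self-contained reconstruction of the outsourced lemma: approximate KKT from $\omega_\lambda(\ti{\bbeta}^1)\le\varepsilon_1$, the basic cone inequality via global convexity of $\cL$, a rescaling/peeling step to bring the error into the $\ell_1$-ball of radius $r$ where Condition~\ref{cond:lse} applies (for which you implicitly need the symmetric-Bregman monotonicity $\cD(\bbeta^*+\eta\Delta)\le\eta\,\cD(\bbeta^*+\Delta)$, the analogue of the inequality the paper invokes in the proof of Proposition~\ref{prop:decomposition2}), and the block-decomposition transfer from sparse eigenvalues to a restricted eigenvalue over the cone, using the inflation inequality $\ti{s}/s>C_1\rho_+^2/\kappa_-^2$. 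All of these steps are sound, and note that the paper's Definition of LSE constrains only the direction $\bu$ to be sparse while the evaluation point $\bbeta$ need only satisfy $\|\bbeta-\bbeta^*\|_1\le r$, so your worry about the segment $\bbeta^*+t\Delta_\eta$ being dense is already accommodated by the definition; the block argument is needed only for the dense direction $\Delta$.

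The one place where your bookkeeping diverges from the paper's is the cone constant. You use the crude subgradient bound $\langle\bxi,\Delta\rangle\ge\|\Delta_{\cO^c}\|_1-\|\Delta_{\cO}\|_1$, which yields $c_0=(\lambda+\mu+\varepsilon_1)/(\lambda-\mu-\varepsilon_1)\approx 3$ and hence $\|\Delta\|_1\le(1+c_0)\sqrt{s}\,\|\Delta\|_2\approx 4\sqrt{s}\,\|\Delta\|_2$, which would give roughly $72\rho_*^{-1}\lambda s$ rather than $36\rho_*^{-1}\lambda s$. The paper reaches the factor $2\sqrt{s}\,\|\Delta\|_2$ by pairing the \emph{difference} of subgradients $\lambda(\tilde\bxi-\bxi^*)$ with $\Delta$ as in Lemma~\ref{lem:akkt1}, so that the $S^c$-contribution equals $\lambda\|\Delta_{S^c}\|_1$ exactly and the $S$-contribution is nonnegative, giving cone constant $1$ under $\mu+\varepsilon_1\le\lambda/2$. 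If you want the stated constants $18$ and $36$ to come out, replace your subgradient step with this sharper pairing; otherwise your argument proves the proposition only up to larger absolute constants.
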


This is a deterministic statement that bounds the estimation error after the first stage. With a properly selected $\lambda$, $\ti{\bbeta}^1$ is within an $\ell_1$ ball with radius $r$ around $\bbeta^*$.
The following proposition characterizes the sparsity of $\ti{\bbeta}^1$, which is also a deterministic result.

\begin{prop}
Suppose that Condition \ref{cond:lse} holds. If $\lambda,\varepsilon_1$ and $r$ satisfy \eqref{fan1}, then $\norm{\tilde{\bbeta}^1_{S^c}}_0\leq\tilde{s}$, i.e. $\tilde{\bbeta}^1$ is $s+\tilde{s}$ sparse. Here $\ti{s}$ is as described in Condition \ref{cond:lse}.
\label{prop:stage1sparsity}
\end{prop}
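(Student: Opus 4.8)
The plan is to read a lower bound on the gradient of $\cL$ over the off-support active set from the first-order optimality of $\tilde{\bbeta}^1$, convert it into a lower bound on the gradient \emph{increment} $\nabla\cL(\tilde{\bbeta}^1)-\nabla\cL(\bbeta^*)$, cap the size of that increment via the localized sparse eigenvalues together with the $\ell_1/\ell_2$ error bounds of Proposition \ref{prop:contraction}, and then run a counting argument. Write $A:=\{j\in S^c:\tilde{\beta}^1_j\neq 0\}$, so that $\norm{\tilde{\bbeta}^1_{S^c}}_0=\abs{A}$ and the goal is $\abs{A}\leq\tilde{s}$. Since $\tilde{\bbeta}^1$ is an $\varepsilon_1$-optimal solution of the first-stage Lasso program \eqref{eq:Lasso }, there is $\bxi\in\partial\norm{\tilde{\bbeta}^1}_1$ with $\norm{\nabla\cL(\tilde{\bbeta}^1)+\lambda\bxi}_\infty\leq\varepsilon_1$. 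For $j\in A$ the coordinate $\tilde{\beta}^1_j\neq 0$ forces $\xi_j=\pm 1$, whence $\abs{\nabla_j\cL(\tilde{\bbeta}^1)}\geq\lambda-\varepsilon_1$; combining with $\norm{\nabla\cL(\bbeta^*)}_\infty\leq(\lambda-\varepsilon_1)/2$, which is exactly what \eqref{fan1} provides, yields the uniform lower bound $\abs{g_j}\geq\tau:=(\lambda-\varepsilon_1)/2$ on $A$, where $\bfsym{g}:=\nabla\cL(\tilde{\bbeta}^1)-\nabla\cL(\bbeta^*)$.

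Next I would upper bound $\bfsym{g}$ on $A$ through the Hessian. By the fundamental theorem of calculus $\bfsym{g}=\bar{\bfsym{H}}\bDelta$ with $\bDelta:=\tilde{\bbeta}^1-\bbeta^*$ and $\bar{\bfsym{H}}:=\int_0^1\nabla^2\cL(\bbeta^*+t\bDelta)\,\mathrm{d}t$, a symmetric positive semidefinite matrix because the Cox loss is convex. The delicate point is that $\bDelta$ may a priori have large support, so I will \emph{not} treat it as sparse; instead I restrict only the test direction. Arguing by contradiction, if $\abs{A}>\tilde{s}$ then I pick $B\subseteq A$ with $\abs{B}=\tilde{s}+1$, so that $\sqrt{\tilde{s}+1}\,\tau\leq\norm{\bfsym{g}_B}_2=\sup_{\norm{\bu}_2=1,\,\supp(\bu)\subseteq B}\bu^\top\bar{\bfsym{H}}\bDelta$, and Cauchy--Schwarz for the semidefinite form $\bar{\bfsym{H}}$ bounds the right-hand side by $\sqrt{\bu^\top\bar{\bfsym{H}}\bu}\cdot\sqrt{\bDelta^\top\bar{\bfsym{H}}\bDelta}$. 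For the first factor, each $\bbeta^*+t\bDelta$ lies within $\ell_1$-radius $\norm{\bDelta}_1\leq 36\rho_*^{-1}\lambda s\leq r$ of $\bbeta^*$ by Proposition \ref{prop:contraction} and \eqref{fan1}, while $\bu$ is $(\tilde{s}+1)$-sparse with $\tilde{s}+1\leq 2s+2\tilde{s}$, so $\bu^\top\bar{\bfsym{H}}\bu\leq\rho_+(2s+2\tilde{s},r)\leq\rho^*$. For the second factor I would use the exact identity $\bDelta^\top\bar{\bfsym{H}}\bDelta=\langle\bDelta,\bfsym{g}\rangle\leq\norm{\bDelta}_1\norm{\bfsym{g}}_\infty$, which requires no sparsity of $\bDelta$; here $\norm{\bDelta}_1\leq 36\rho_*^{-1}\lambda s$ and $\norm{\bfsym{g}}_\infty\leq(\lambda+\varepsilon_1)+\norm{\nabla\cL(\bbeta^*)}_\infty\lesssim\lambda$.

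Combining the two factors gives $\sqrt{\tilde{s}+1}\,\tau\lesssim\sqrt{\rho^*}\,\lambda\sqrt{\rho_*^{-1}s}$, and since \eqref{fan1} makes $\tau=(\lambda-\varepsilon_1)/2\asymp\lambda$, this rearranges to $\tilde{s}\lesssim(\rho^*/\rho_*)\,s$. This contradicts Condition \ref{cond:lse}, which forces $\tilde{s}>C_1\rho_+^2(2s+2\tilde{s},r)/\kappa_-^2(2s+2\tilde{s},r)\,s$, once $C_1$ exceeds the absolute constant produced by the chain above (using $\rho_*\leq\kappa_-\leq\rho_+\leq\rho^*$, so that all eigenvalue ratios are comparable). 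Hence $\abs{A}\leq\tilde{s}$, i.e. $\tilde{\bbeta}^1$ is $(s+\tilde{s})$-sparse. I expect the main obstacle to be precisely the possibly large support of $\bDelta$: the estimate collapses if one tries to apply a sparse-eigenvalue bound to $\bDelta$ itself, since the relevant $\rho_+$ grows with the support size, as Theorem \ref{thm:minse} shows. The resolution is to confine the test vector to the size-$(\tilde{s}+1)$ set $B$, where the localized eigenvalue is a genuine constant, and to absorb $\bDelta$ through the dimension-free identity $\bDelta^\top\bar{\bfsym{H}}\bDelta=\langle\bDelta,\bfsym{g}\rangle$; matching the resulting bound against the eigenvalue-ratio requirement of Condition \ref{cond:lse} is what pins down the admissible range of $C_1$.
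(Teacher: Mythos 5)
Your argument is correct in substance, and it is genuinely different from what the paper does: the paper gives no proof at all for Proposition~\ref{prop:stage1sparsity}, deferring to \cite{fan2018lamm}, and its closest in-house analogue is the second-stage Lemma~\ref{lem:stepsparsity}, which controls the active set of a single soft-thresholding update by splitting the candidate indices into three sets $S^1,S^2,S^3$ (large coordinate, large gradient at $\bbeta^*$, large gradient increment) and bounding $|S^1|$ via Markov plus the $\ell_1$ cone and $|S^3|$ via a sparse test-vector/H{\"o}lder argument. Your route instead reads the lower bound $|\nabla_j\cL(\tilde\bbeta^1)|\geq\lambda-\varepsilon_1$ directly off the approximate KKT conditions of the Lasso on the off-support active set, and then absorbs the possibly dense error $\bDelta$ through the identity $\bDelta^\top\bar{\bfsym{H}}\bDelta=\langle\bDelta,\bfsym{g}\rangle\leq\|\bDelta\|_1\|\bfsym{g}\|_\infty$ while restricting the sparse-eigenvalue bound to the $(\tilde s+1)$-sparse test direction; this is cleaner and correctly avoids the trap of applying $\rho_+$ to $\bDelta$ itself. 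Two quantitative points should be made explicit. First, \eqref{fan1} as written ($2\|\nabla\cL(\bbeta^*)\|_\infty+\varepsilon_1\leq\lambda$) does not by itself force $\tau=(\lambda-\varepsilon_1)/2\asymp\lambda$: if $\varepsilon_1$ is allowed to approach $\lambda$ the lower bound degenerates, so you need the regime the paper actually works in ($\varepsilon_1\lesssim\sqrt{\log p/n}\ll\lambda$, or the stronger form $\|\nabla\cL(\bbeta^*)\|_\infty+\varepsilon_1\leq\lambda/2$ used in Lemmas~\ref{lem:basicineq} and \ref{lem:stepsparsity}) to conclude $\tau\geq\lambda/4$, say. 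Second, your final contradiction compares $\tilde s\lesssim(\rho_+/\rho_*)s$ against $\tilde s> C_1(\rho_+/\kappa_-)^2 s$; since Condition~\ref{cond:lse} only posits $\rho_*\leq\kappa_-$, the ratio $\rho_+/\rho_*$ is not automatically dominated by $(\rho_+/\kappa_-)^2$ unless one takes $\rho_*$ to be (comparable to) $\kappa_-$ itself, so you should either state that choice or carry $\kappa_-$ rather than $\rho_*$ through the $\ell_1$ bound from Proposition~\ref{prop:contraction}. Neither point undermines the structure of the proof.
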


To determine  $\lambda$, we prove a tail probabilistic bound for $\norm{\nabla\cL(\bbeta^*)}_{\infty}$ as well $\norm{\nabla\cL(\bbeta^*)_S}_{2}$.

\begin{prop}
Suppose that $\mathbb{P}\left\{\sup\limits_t\norm{\bX_i(t)}_{\infty}\leq M\right\}=1$,
 then
\[
\mathbb{P}\left(\norm{\nabla\cL(\bbeta^*)}_{\infty}\geq 2M\sqrt{\frac{2\log(2p/\varepsilon_0)}{n}}\right)\leq \varepsilon_0,\quad \mathbb{P}\left(\norm{\nabla\cL(\bbeta^*)_S}_{2}\geq 2M\sqrt{\frac{\left(\sqrt{-64\log \varepsilon_0}+1\right)s}{n}}\right)\leq \varepsilon_0.
\]
\label{prop:maxgrad}
\end{prop}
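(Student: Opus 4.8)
The plan is to exploit the counting-process martingale structure of the Cox score evaluated at the truth. First I would differentiate the negative log-partial likelihood \eqref{eqloss1}. Writing $\bar{\bX}(\bbeta,t)=\sum_{j}Y_j(t)\bX_j(t)e^{\bbeta^{\top}\bX_j(t)}/\sum_j Y_j(t)e^{\bbeta^{\top}\bX_j(t)}$ for the risk-weighted covariate average, the gradient takes the form $\nabla\cL(\bbeta)=\frac1n\sum_i\int_0^\infty\{\bX_i(t)-\bar{\bX}(\bbeta,t)\}\,dN_i(t)$ (the overall sign is immaterial for the norm bounds). Substituting $dN_i(t)=dM_i(t)+Y_i(t)\lambda_0(t)e^{\bbeta^{*\top}\bX_i(t)}\,dt$ at $\bbeta=\bbeta^*$, where $M_i$ is the counting-process martingale, the compensator contribution cancels by the very definition of $\bar{\bX}$, leaving $\nabla\cL(\bbeta^*)=\frac1n\sum_i\int_0^\infty\{\bX_i(t)-\bar{\bX}(\bbeta^*,t)\}\,dM_i(t)$. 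Thus the score is a mean-zero martingale integral with a predictable, uniformly bounded integrand.

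Next I would reduce this to a bounded-difference martingale indexed by the ordered failure times $\tau_1<\cdots<\tau_D$, where $D=\sum_i\delta_i\le n$. Letting $i_m$ be the subject failing at $\tau_m$, set $\bxi_m=\bX_{i_m}(\tau_m)-\bar{\bX}(\bbeta^*,\tau_m)$, so that $n\,\nabla\cL(\bbeta^*)=\sum_{m=1}^D\bxi_m$. Under the Cox model, conditional on the risk set and covariate history the failing subject is selected with probability proportional to $e^{\bbeta^{*\top}\bX_j(\tau_m)}$, whose induced conditional mean of $\bX$ is exactly $\bar{\bX}(\bbeta^*,\tau_m)$; hence $\{\bxi_m\}$ is a martingale-difference sequence. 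Since $\bar{\bX}(\bbeta^*,t)$ is a convex combination of covariate vectors each bounded by $M$ in sup-norm, every coordinate satisfies $|\xi_{m,k}|\le 2M$ and $\|\bxi_{m,S}\|_2\le 2M\sqrt s$. Padding with zeros past $D$ produces exactly $n$ increments.

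For the $\ell_\infty$ bound I would apply the scalar Azuma--Hoeffding inequality coordinatewise: each $\{\xi_{m,k}\}_{m\le n}$ is a martingale-difference sequence bounded by $2M$, so $\mathbb{P}(|[\nabla\cL(\bbeta^*)]_k|\ge t)=\mathbb{P}(|\sum_m\xi_{m,k}|\ge nt)\le 2\exp(-nt^2/(8M^2))$. A union bound over the $p$ coordinates and inversion at level $\varepsilon_0$ yield the threshold $2M\sqrt{2\log(2p/\varepsilon_0)/n}$. For the $\ell_2$ bound I would work with the $s$-dimensional vector martingale $\sum_m\bxi_{m,S}$. Orthogonality of the increments gives the second-moment bound $\mathbb{E}\|\nabla\cL(\bbeta^*)_S\|_2^2=n^{-2}\mathbb{E}\sum_m\|\bxi_{m,S}\|_2^2\le 4M^2 s/n$, which supplies the additive ``$+1$'' inside the radical, while a sub-Gaussian Hilbert-space (Pinelis-type) martingale concentration of $\|\sum_m\bxi_{m,S}\|_2$ about its mean, with per-step increment bound $2M\sqrt s$, controls the fluctuation and contributes the $\sqrt{-64\log\varepsilon_0}$ factor; combining the two yields the stated threshold $2M\sqrt{(\sqrt{-64\log\varepsilon_0}+1)s/n}$.

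The main obstacle is the rigorous justification that the compensator cancellation leaves a genuine mean-zero martingale and that the discretized increments $\bxi_m$ form a bounded martingale-difference sequence, i.e.\ formalizing the ``conditional-on-the-risk-set'' selection argument within the continuous-time filtration and handling the random number of failures $D$ through optional stopping or zero-padding so that the bounded-difference inequalities apply with a fixed horizon $n$. Pinning down the exact constants in the $\ell_2$ tail (the factor $64$ and the additive $1$) is the remaining delicate point, arising from the specific sub-Gaussian concentration applied to the squared norm rather than from any conceptual difficulty.
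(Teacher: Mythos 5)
Your proposal is correct in substance and follows the same skeleton as the paper's argument, but the two halves deserve separate comment. For the $\ell_\infty$ bound the paper simply cites Theorem 3.2 of \cite{huang2013oracle}; your coordinatewise Azuma--Hoeffding argument on the failure-time-indexed martingale differences $\bxi_m$ (bounded by $2M$ per coordinate) plus a union bound is exactly the content of that citation and reproduces the constant $2M\sqrt{2\log(2p/\varepsilon_0)/n}$. For the $\ell_2$ bound you and the paper start identically: both write $\nabla\cL(\bbeta^*)_S=n^{-1}\sum_i\bA_i$ with $\bA_i=\int_0^{t^*}(\bX_i(s)-\bar{\bX}_n(s,\bbeta^*))_S\,\mathrm{d}N_i(s)$, invoke the mean-zero/orthogonality structure to get $\mathbb{E}\norm{\nabla\cL(\bbeta^*)_S}_2^2\leq 4M^2s/n$, and then control the fluctuation. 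Here the routes diverge: the paper concentrates the \emph{scalar quadratic form} $\frac{1}{n}(\sum_i\bA_i)^\top(\sum_i\bA_i)$ around its mean via a bounded-differences inequality (their Theorem 3.4 of Wainwright) and takes the deviation to be a multiple $\gamma$ of the mean with $\varepsilon_0=e^{-\gamma^2/64}$ --- that is precisely where the multiplicative factor $\sqrt{-64\log\varepsilon_0}+1$ comes from; you instead propose Pinelis-type Hilbert-space concentration of the \emph{norm} $\norm{\sum_m\bxi_{m,S}}_2$ about its mean. The latter is a perfectly sound and arguably cleaner device, but it yields an additive norm deviation of order $M\sqrt{s\log(1/\varepsilon_0)/n}$, i.e.\ a squared-norm threshold scaling like $s\log(1/\varepsilon_0)/n$, whereas the stated threshold scales like $s\sqrt{\log(1/\varepsilon_0)}/n$; so for small $\varepsilon_0$ your route proves a formally weaker tail and will not literally recover the displayed constants (as you anticipated in flagging the $64$ and the $+1$). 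This discrepancy is immaterial for every downstream use in the paper, where $\varepsilon_0$ is a fixed constant and only $\norm{\nabla\cL(\bbeta^*)_S}_2\lesssim M\sqrt{s/n}$ is needed, but if you want the proposition exactly as stated you should concentrate the quadratic form itself, as the paper does, rather than the norm.
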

\begin{remark}
From Proposition \ref{prop:maxgrad}, we know that with high probability $\norm{\nabla\cL(\bbeta^*)}_{\infty}$ in Cox's model is at the order of $\sqrt{\log p/n}$. By taking $\lambda=C_2\sqrt{\log p/n}$ with a large enough $C_2>0$ and taking $\varepsilon_1\lesssim\sqrt{\log p/n}$, when the radius $r$ in the LSE condition satisfies $r\geq36\rho_*^{-1}\lambda s$, the condition in Proposition \ref{prop:contraction} and Proposition \ref{prop:stage1sparsity} holds.
\end{remark}

Proposition \ref{prop:contraction} and Proposition \ref{prop:stage1sparsity} together suggest that $\tilde{\bbeta}^1$ falls in  $\cC(s+\tilde{s},r)$. This justifies the validity of directly optimizing the original nonconvex loss function (\ref{eqloss}) when starting from the warm initialization $\tilde{\bbeta}^1$ in the second stage.


\begin{prop}\label{prop:decomposition2}
Suppose that Conditions \ref{cond:lse} and \ref{cond:concave} hold. Let $\cE_1=\{j: \vert\beta^*_j\vert\leq a_1\lambda\}$ with $a_1$ defined in Condition \ref{cond:concave}. If $2\left(\norm{\nabla\cL(\bbeta^*)}_{\infty}+\varepsilon_2\right)\leq\lambda\lesssim r/s$, then $\tilde{\bbeta}^2$ satisfies the following inequality
\[
\norm{\tilde{\bbeta}^2-\bbeta^*}_2\leq C_3\left(\norm{\nabla\cL(\bbeta^*)_S}_2+\varepsilon_2\sqrt{s}+\lambda\sqrt{\abs{\cE_1\cap S}}\right)
\]
where $C_3>0$ is a constant.
\end{prop}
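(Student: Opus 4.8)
The plan is to convert the $\varepsilon_2$-stationarity of $\tilde{\bbeta}^{2}$ into the stated bound through the restricted (localized) strong convexity of the shifted loss $\ti{\cL}$. Write $\bDelta=\tilde{\bbeta}^{2}-\bbeta^*$ and $g(\bbeta):=p_\lambda(\bbeta)-\lambda\|\bbeta\|_1$, so that $\ti{\cL}=\cL+g$. First I would record the consequences of Condition~\ref{cond:concave}: since $p_\lambda'$ decreases continuously from $p_\lambda'(0^+)=\lambda$ to $0$ and vanishes beyond $a_1\lambda$, we have $0\le p_\lambda'(t)\le\lambda$, the vector $\nabla g(\bbeta^*)$ is supported on $S$ with $\nabla g(\bbeta^*)_j=(p_\lambda'(\abs{\beta^*_j})-\lambda)\sgn(\beta^*_j)$, and the residual penalty force $p_\lambda'(\abs{\beta^*_j})$ is zero on the strong signals $S\setminus\cE_1$ and at most $\lambda$ on the weak signals $S\cap\cE_1$. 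Consequently $\|(p_\lambda'(\abs{\beta^*_j}))_{j\in S}\|_2\le\lambda\sqrt{\abs{\cE_1\cap S}}$, which is precisely the origin of the third term.

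The backbone is restricted strong convexity of $\ti{\cL}$. Using that $\tilde{\bbeta}^{2}\in\cC(s+\tilde{s},r)$ and is $(s+\tilde{s})$-sparse --- so $\bDelta$ is $(2s+\tilde{s})$-sparse and the whole segment $\bbeta^*+t\bDelta$, $t\in[0,1]$, has support of size at most $2s+2\tilde{s}$ and stays in the radius-$r$ $\ell_1$-ball --- the identity $\langle\nabla\ti{\cL}(\tilde{\bbeta}^{2})-\nabla\ti{\cL}(\bbeta^*),\bDelta\rangle=\int_0^1\bDelta^\top\nabla^2\ti{\cL}(\bbeta^*+t\bDelta)\bDelta\,\mathrm{d}t$ together with Condition~\ref{cond:lse} gives $\langle\nabla\ti{\cL}(\tilde{\bbeta}^{2})-\nabla\ti{\cL}(\bbeta^*),\bDelta\rangle\ge\kappa_-\|\bDelta\|_2^2$ with $\kappa_-=\kappa_-(2s+2\tilde{s},r)\ge\rho_*$. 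I then substitute the stopping rule \eqref{eq:stopping}: $\varepsilon_2$-optimality supplies $\bxi\in\partial\|\tilde{\bbeta}^{2}\|_1$ and $\bfsym{\nu}$ with $\|\bfsym{\nu}\|_\infty\le\varepsilon_2$ such that $\nabla\ti{\cL}(\tilde{\bbeta}^{2})=\bfsym{\nu}-\lambda\bxi$. Plugging this in and splitting every inner product over $S$ and $S^{c}$ --- using $\xi_j\bDelta_j=\abs{\bDelta_j}$ and $\nabla g(\bbeta^*)_j=0$ on $S^{c}$ --- gives
\[
\kappa_-\|\bDelta\|_2^2\le\langle\bfsym{\nu},\bDelta\rangle-\langle\nabla\cL(\bbeta^*),\bDelta\rangle-\lambda\|\bDelta_{S^{c}}\|_1+\langle P_S,\bDelta_S\rangle,
\]
where $P_j:=-\lambda\xi_j-\nabla g(\bbeta^*)_j$ for $j\in S$.

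The two decisive estimates concern $\langle P_S,\bDelta_S\rangle$ and the off-support terms. Writing $P_j=-p_\lambda'(\abs{\beta^*_j})\sgn(\beta^*_j)+\lambda(\sgn(\beta^*_j)-\xi_j)$, the first piece contributes at most $\lambda\sqrt{\abs{\cE_1\cap S}}\,\|\bDelta_S\|_2$ by Cauchy--Schwarz and the preliminary bound, while the second piece is the key cancellation: a short case check on $\sgn(\beta^*_j)$ shows $(\sgn(\beta^*_j)-\xi_j)\bDelta_j\le0$ coordinatewise (because $\xi_j=\sgn(\tilde{\beta}^{2}_{j})$ when $\tilde{\beta}^{2}_{j}\neq0$, and $\bDelta_j=-\beta^*_j$ when $\tilde{\beta}^{2}_{j}=0$), so it only helps. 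For the off-support part I combine $\langle\bfsym{\nu},\bDelta\rangle$ and $-\langle\nabla\cL(\bbeta^*),\bDelta\rangle$ with $-\lambda\|\bDelta_{S^{c}}\|_1$, bounding their $S^{c}$ contribution by $(\varepsilon_2+\|\nabla\cL(\bbeta^*)\|_\infty-\lambda)\|\bDelta_{S^{c}}\|_1\le0$ via the assumption $2(\|\nabla\cL(\bbeta^*)\|_\infty+\varepsilon_2)\le\lambda$; thus the entire $S^{c}$ contribution can be discarded. What remains lives on $S$: using $-\langle\nabla\cL(\bbeta^*)_S,\bDelta_S\rangle\le\|\nabla\cL(\bbeta^*)_S\|_2\|\bDelta_S\|_2$, $\|\bDelta_S\|_1\le\sqrt{s}\|\bDelta_S\|_2$ and $\|\bDelta_S\|_2\le\|\bDelta\|_2$, I reach $\kappa_-\|\bDelta\|_2^2\le(\|\nabla\cL(\bbeta^*)_S\|_2+\varepsilon_2\sqrt{s}+\lambda\sqrt{\abs{\cE_1\cap S}})\|\bDelta\|_2$, and dividing by $\|\bDelta\|_2$ yields the claim with $C_3=\kappa_-^{-1}\le\rho_*^{-1}$.

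The step I expect to be the main obstacle is legitimizing restricted strong convexity at $\tilde{\bbeta}^{2}$, namely verifying that the second-stage output is genuinely $(s+\tilde{s})$-sparse and stays inside the radius-$r$ $\ell_1$ neighborhood on which Condition~\ref{cond:lse} controls the Hessian; this needs the second-stage containment analysis (the stage-two analogue of Propositions~\ref{prop:contraction}--\ref{prop:stage1sparsity}) and is exactly what renders the otherwise nonconvex gradient inequality valid. Everything afterward is a deterministic decomposition, the only remaining delicate point being the coordinatewise cancellation $(\sgn(\beta^*_j)-\xi_j)\bDelta_j\le0$, which is what upgrades a naive $\lambda\sqrt{s}$ penalty bias into the oracle-type $\lambda\sqrt{\abs{\cE_1\cap S}}$.
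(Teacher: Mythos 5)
Your core decomposition is essentially the paper's own: the same symmetrized-Bregman / restricted-strong-convexity lower bound $\kappa_-\|\bDelta\|_2^2\le\langle\nabla\ti{\cL}(\tilde{\bbeta}^2)-\nabla\ti{\cL}(\bbeta^*),\bDelta\rangle$, the same substitution of the $\varepsilon_2$-stationarity condition, the same three-way split by signal strength (strong signals where $p_\lambda'$ vanishes, weak signals in $\cE_1\cap S$ contributing $\lambda\sqrt{\abs{\cE_1\cap S}}$ via Cauchy--Schwarz, and $S^c$ where the $\lambda\|\bDelta_{S^c}\|_1$ term absorbs the gradient and optimization error thanks to $2(\|\nabla\cL(\bbeta^*)\|_\infty+\varepsilon_2)\le\lambda$), the same subgradient-monotonicity cancellation $(\xi^*_j-\xi_j)(\tilde\beta^2_j-\beta^*_j)\le 0$, and the same constant $C_3=\kappa_-^{-1}$. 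All of that is correct and matches equations (\ref{eq:cc}) and (\ref{eq:c2})--(\ref{eq:c12}) in the paper.

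The genuine gap is exactly the step you flag and then defer: you invoke Condition \ref{cond:lse} along the entire segment from $\bbeta^*$ to $\tilde{\bbeta}^2$, which requires $\|\tilde{\bbeta}^2-\bbeta^*\|_1\le r$ \emph{before} you have any bound on $\tilde{\bbeta}^2-\bbeta^*$ --- and the $\ell_1$ containment of the second-stage output is not supplied by a separate ``stage-two analogue of Propositions \ref{prop:contraction}--\ref{prop:stage1sparsity}''; it is a consequence of this very proposition, so deferring to it is circular. The paper closes this loop with a retraction bootstrap that your write-up is missing: define $\hat{\bbeta}^*=\bbeta^*+t(\tilde{\bbeta}^2-\bbeta^*)$ with $t\in(0,1]$ maximal subject to $\|\hat{\bbeta}^*-\bbeta^*\|_1\le r$, run the entire decomposition for $\hat{\bbeta}^*$ (using $\cD^s_{\ti{\cL}}(\hat{\bbeta}^*,\bbeta^*)\le t\,\cD^s_{\ti{\cL}}(\tilde{\bbeta}^2,\bbeta^*)$, which holds by convexity of $\ti\cL$ restricted to the sparse segment), obtain $\|\hat{\bbeta}^*-\bbeta^*\|_2\le\frac{3}{2}\kappa_-^{-1}\lambda\sqrt{s}$ and hence $\|\hat{\bbeta}^*-\bbeta^*\|_1\lesssim\lambda s<r$ strictly under $\lambda\lesssim r/s$; this strict inequality rules out $t<1$ (which would force $\|\hat{\bbeta}^*-\bbeta^*\|_1=r$), so $\hat{\bbeta}^*=\tilde{\bbeta}^2$ and the bound transfers. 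The sparsity half of your containment claim (that $\tilde{\bbeta}^2$ is $(s+\tilde{s})$-sparse) is legitimately supplied by the per-iteration induction (Lemmas \ref{lem:stepsparsity} and \ref{lem:3.15}), but the $\ell_1$-radius half needs the interpolation argument; without it your application of the LSE condition at $\tilde{\bbeta}^2$ is unjustified.
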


 Proposition \ref{prop:decomposition2} shows that the estimation error is upper bounded by the oracle rate, the optimization error and a bias term introduced by regularization. The bias only exists on $\cE_1\cap S$ where the signal strength is not strong enough. Based on this decomposition, we arrive at  the following result, showing that under a condition on the true signal $\bbeta^*$, the obtained estimator can achieve the weak oracle property.

\begin{thm}[Weak Oracle Property]\label{thm:weak}
Suppose that Conditions \ref{cond:lse} and \ref{cond:concave} hold and $\norm{\bbeta^*_S}_{\min}\geq a_1\lambda$. Take $\lambda=4M\sqrt{2\log(2p/\varepsilon_0)/n}$ where $M=\max\limits_{1\leq i\leq n}\sup\limits_{t\geq 0}\norm{\bX_i(t)}_{\infty}$ as in Proposition \ref{prop:maxgrad}, $\varepsilon_0>0$ and $\varepsilon_2\lesssim\sqrt{1/n}$, then
\begin{gather*}
\norm{\tilde{\bbeta}^2-\bbeta^*}_2\leq C_4\sqrt{\frac{s}{n}}
\end{gather*}
for some constant $C_4>0$ with probability at least $1-\varepsilon_0$.
\label{thm:weakoracle}
\end{thm}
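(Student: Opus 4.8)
The plan is to read the result off the deterministic decomposition in Proposition~\ref{prop:decomposition2}, whose right-hand side splits the estimation error into three pieces: an oracle term $\norm{\nabla\cL(\bbeta^*)_S}_2$, an optimization error $\varepsilon_2\sqrt{s}$, and a regularization bias term $\lambda\sqrt{\abs{\cE_1\cap S}}$. The whole theorem follows once I (i) verify the hypotheses of that proposition under the stated choice of $\lambda$ and $\varepsilon_2$, (ii) show the bias term vanishes under the minimal-signal condition, and (iii) bound the two remaining terms by $O(\sqrt{s/n})$ using Proposition~\ref{prop:maxgrad}.

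First I would verify the hypotheses of Proposition~\ref{prop:decomposition2}. Conditions~\ref{cond:lse} and~\ref{cond:concave} are assumed outright. For the gradient condition $2(\norm{\nabla\cL(\bbeta^*)}_{\infty}+\varepsilon_2)\le\lambda$, I invoke the first tail bound of Proposition~\ref{prop:maxgrad}: with $\lambda=4M\sqrt{2\log(2p/\varepsilon_0)/n}$ we have $\norm{\nabla\cL(\bbeta^*)}_{\infty}\le 2M\sqrt{2\log(2p/\varepsilon_0)/n}=\lambda/2$ on an event of probability at least $1-\varepsilon_0$. Since $\varepsilon_2\lesssim\sqrt{1/n}$ is of strictly smaller order than $\lambda\asymp\sqrt{\log p/n}$, the additive $\varepsilon_2$ is absorbed and the gradient condition holds for $n$ large (after possibly enlarging the leading constant in $\lambda$). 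The radius condition $\lambda\lesssim r/s$ is exactly the requirement $r\ge 36\rho_*^{-1}\lambda s$ imposed in the remark following Proposition~\ref{prop:maxgrad}. Thus Proposition~\ref{prop:decomposition2} applies on this event.

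Next I eliminate the bias term and control what remains. By definition $\cE_1=\{j:\abs{\beta^*_j}\le a_1\lambda\}$, while the minimal-signal hypothesis $\norm{\bbeta^*_S}_{\min}\ge a_1\lambda$ forces every coordinate $j\in S$ to satisfy $\abs{\beta^*_j}\ge a_1\lambda$, so no coordinate of $S$ lies strictly inside $\cE_1$; consequently $\cE_1\cap S=\emptyset$ and the term $\lambda\sqrt{\abs{\cE_1\cap S}}$ vanishes, which is precisely the point at which the strong-signal assumption strips off the regularization bias. For the oracle term, the second tail bound of Proposition~\ref{prop:maxgrad} gives $\norm{\nabla\cL(\bbeta^*)_S}_2\le 2M\sqrt{(\sqrt{-64\log\varepsilon_0}+1)s/n}=O(\sqrt{s/n})$ once $\varepsilon_0$ is treated as a fixed constant, and $\varepsilon_2\sqrt{s}\lesssim\sqrt{s/n}$ follows directly from $\varepsilon_2\lesssim\sqrt{1/n}$. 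Substituting these into Proposition~\ref{prop:decomposition2} and collecting constants yields $\norm{\tilde{\bbeta}^2-\bbeta^*}_2\le C_4\sqrt{s/n}$; the probability $1-\varepsilon_0$ comes from the high-probability events of Proposition~\ref{prop:maxgrad}, after a union bound (or by noting both bounds stem from the same concentration of the score process, so a single $\varepsilon_0$ suffices).

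I expect the main obstacle to be bookkeeping rather than a new idea: the genuine analytic work is already packaged in Proposition~\ref{prop:decomposition2} (the deterministic error decomposition) and Proposition~\ref{prop:maxgrad} (the concentration of $\nabla\cL(\bbeta^*)$). The delicate points are making the constants in $2(\norm{\nabla\cL(\bbeta^*)}_{\infty}+\varepsilon_2)\le\lambda$ fully consistent with the exact prescription $\lambda=4M\sqrt{2\log(2p/\varepsilon_0)/n}$, and checking that the minimal-signal condition removes $\cE_1\cap S$ exactly, including the boundary case $\abs{\beta^*_j}=a_1\lambda$, so that the bias contribution is genuinely zero rather than merely negligible.
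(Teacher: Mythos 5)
Your proposal is correct and follows essentially the same route as the paper's proof: apply the deterministic decomposition of Proposition \ref{prop:decomposition2}, use the minimal-signal condition to force $\cE_1\cap S=\emptyset$ and kill the bias term, and bound the oracle and optimization terms via Proposition \ref{prop:maxgrad} and $\varepsilon_2\lesssim\sqrt{1/n}$. Your explicit attention to the constant in $2(\norm{\nabla\cL(\bbeta^*)}_{\infty}+\varepsilon_2)\leq\lambda$ and to the boundary case $\abs{\beta^*_j}=a_1\lambda$ is, if anything, more careful than the paper's own treatment.
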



Before establishing the strong oracle property of $\hat{\bbeta}^2$, we define the oracle estimator $\hat{\bbeta}^0$ to be
\begin{equation}
\hat{\bbeta}^0=\argmin\limits_{\text{supp}(\bbeta)=S}\cL(\bbeta).
\end{equation}

\begin{thm}[Strong Oracle Property] 
Suppose that Conditions \ref{cond:lse} and \ref{cond:concave} hold, $\norm{\hbbeta^0-\bbeta^*}_{\max}\leq\eta_n\lesssim\lambda$
and $\norm{\bbeta_S^*}_{\min}\geq a_1\lambda+\eta_n$. Let $\hat{\bbeta}^2$ be the exact solution of the second stage, i.e. $\varepsilon_2=0$. Take $\lambda=2K\sqrt{2\log(2p/\varepsilon_0)/n}$ with some small $\varepsilon_0>0$, we have with probability greater than $1-\varepsilon_0$
\begin{gather*}
\hbbeta^2=\hbbeta^0.
\end{gather*}
\label{thm:strongoracle}
\end{thm}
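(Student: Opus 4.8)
The plan is to establish the strong oracle property through a primal--dual witness argument: I will show that the oracle estimator $\hbbeta^0$ is itself a stationary point of the penalized objective $\cL(\bbeta)+\sum_k p_\lambda(\abs{\beta_k})$ lying inside the locally restricted region $\cC(2s+2\ti{s},r)$ on which, by Condition \ref{cond:lse}, the shifted loss $\ti{\cL}$ is restricted strongly convex. Because stationary points of the penalized program inside this region turn out to be unique, and because the exact second-stage solution $\hbbeta^2$ is a stationary point lying in this region by Propositions \ref{prop:contraction}--\ref{prop:decomposition2}, it will follow that $\hbbeta^2=\hbbeta^0$. The first step is to check that $\hbbeta^0$ indeed sits in $\cC(2s+2\ti{s},r)$: it is $s$-sparse, and $\norm{\hbbeta^0-\bbeta^*}_1\le s\eta_n\lesssim s\lambda\le r$ under the assumed scaling of $r$.

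Next I would verify the two Karush--Kuhn--Tucker conditions for $\hbbeta^0$. On the support $S$, the definition $\hbbeta^0=\argmin_{\supp(\bbeta)=S}\cL(\bbeta)$ gives $\nabla_S\cL(\hbbeta^0)=\mathbf{0}$, while the signal-strength assumption combined with $\norm{\hbbeta^0-\bbeta^*}_{\max}\le\eta_n$ yields $\abs{\hat\beta^0_j}\ge\norm{\bbeta^*_S}_{\min}-\eta_n\ge a_1\lambda$ for every $j\in S$; by part (3) of Condition \ref{cond:concave} the penalty derivative then vanishes, $p'_\lambda(\abs{\hat\beta^0_j})=0$, so the stationarity equation $\nabla_j\cL(\hbbeta^0)+p'_\lambda(\abs{\hat\beta^0_j})\sgn(\hat\beta^0_j)=0$ holds automatically on $S$. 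It remains to confirm the dual-feasibility condition on $S^c$, namely $\norm{\nabla_{S^c}\cL(\hbbeta^0)}_\infty\le p'_\lambda(0+)=\lambda$.

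Bounding this off-support gradient is the technical heart of the proof. I would split $\nabla_{S^c}\cL(\hbbeta^0)=\nabla_{S^c}\cL(\bbeta^*)+\{\nabla_{S^c}\cL(\hbbeta^0)-\nabla_{S^c}\cL(\bbeta^*)\}$. The first term is controlled by Proposition \ref{prop:maxgrad}: with $\lambda=2K\sqrt{2\log(2p/\varepsilon_0)/n}$ and $K=2M$ matched to the covariate bound, one has $\norm{\nabla\cL(\bbeta^*)}_\infty\le\lambda/2$ on an event of probability at least $1-\varepsilon_0$, which supplies the stated probability. The second term I would handle by a mean-value expansion, $\nabla_{S^c}\cL(\hbbeta^0)-\nabla_{S^c}\cL(\bbeta^*)=\int_0^1\nabla^2_{S^c,S}\cL(\bbeta^*+t\bDelta)\,\bDelta\,\mathrm{d}t$ with $\bDelta=\hbbeta^0-\bbeta^*$ supported on $S$; since $\bDelta$ and each coordinate direction $\mathbf{e}_j$ for $j\in S^c$ lie in an $(s+1)$-sparse set inside $\cC$, the localized sparse eigenvalue gives $\abs{\mathbf{e}_j^\top\nabla^2\cL\,\bDelta}\le\rho_+(s+1,r)\norm{\bDelta}_2\le\rho_+(s+1,r)\sqrt{s}\,\eta_n$. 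The hard part will be arranging the constants so that this deviation is at most $\lambda/2$: this is exactly where the closeness assumption $\eta_n\lesssim\lambda$, the boundedness $\rho_+\le\rho^*$, and the interplay between the sparsity level $s$ and the scale of $\lambda$ must be used carefully. Summing the two pieces then gives $\norm{\nabla_{S^c}\cL(\hbbeta^0)}_\infty\le\lambda$, establishing dual feasibility.

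Finally, having shown that $\hbbeta^0$ is a stationary point inside $\cC(2s+2\ti{s},r)$, I would invoke uniqueness. Any two stationary points $\bbeta_1,\bbeta_2$ of the penalized program in this region satisfy $\nabla\ti{\cL}(\bbeta_1)-\nabla\ti{\cL}(\bbeta_2)+\lambda(\bxi_1-\bxi_2)=\mathbf{0}$ with $\bxi_i\in\partial\norm{\bbeta_i}_1$; pairing with $\bbeta_1-\bbeta_2$, using monotonicity of the $\ell_1$ subgradient and the restricted strong convexity $\langle\nabla\ti{\cL}(\bbeta_1)-\nabla\ti{\cL}(\bbeta_2),\bbeta_1-\bbeta_2\rangle\ge\kappa_-(2s+2\ti{s},r)\norm{\bbeta_1-\bbeta_2}_2^2$ (valid because the difference is $2(2s+2\ti{s})$-sparse and $\kappa_-\ge\rho_*>0$ by Condition \ref{cond:lse}) forces $\bbeta_1=\bbeta_2$. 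Since the exact second-stage solution $\hbbeta^2$ (with $\varepsilon_2=0$) is such a stationary point lying in $\cC(2s+2\ti{s},r)$ by the preceding propositions, uniqueness yields $\hbbeta^2=\hbbeta^0$, completing the proof.
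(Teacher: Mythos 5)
Your overall architecture --- verify that $\hbbeta^0$ is an exact stationary point of the penalized program inside the locally convex region, then conclude by uniqueness of stationary points there --- is essentially the same idea the paper uses, though packaged differently: the paper first shows the LAMM iterates converge to the global minimizer $\hbbeta$ of the penalized problem, then argues $\hbbeta=\hbbeta^0$ by exactly the stationarity-plus-local-convexity contradiction you describe, and finally adds a third step (which your formulation does not need, since you work directly with the exact solution $\hbbeta^2$) showing that once an iterate is close enough to $\hbbeta^0$ the soft-thresholding update lands on it exactly. Your primal--dual-witness packaging is arguably cleaner for the statement as given, and your uniqueness argument (monotonicity of the $\ell_1$ subgradient plus restricted strong convexity of $\ti{\cL}$ on differences that are $2s+2\ti{s}$-sparse) is sound, as are the membership checks placing $\hbbeta^0$ and $\hbbeta^2$ in $\cC(2s+2\ti{s},r)$ and the vanishing of $p'_\lambda$ on $S$ from the beta-min condition.

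The genuine gap is in your dual-feasibility step, and you half-admit it. Your bound reads
\[
\norm{\nabla\cL(\hbbeta^0)_{S^c}}_\infty \;\le\; \norm{\nabla\cL(\bbeta^*)}_\infty + \rho_+(s+1,r)\,\norm{\bDelta}_2 \;\le\; \lambda/2 + \rho_+(s+1,r)\sqrt{s}\,\eta_n,
\]
and you need the second term to be at most $\lambda/2$. But the hypothesis only gives $\eta_n\lesssim\lambda$, so the deviation term is of order $\sqrt{s}\,\lambda$, which exceeds $\lambda/2$ as soon as $s$ grows; no arrangement of absolute constants repairs a $\sqrt{s}$ factor. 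To close this you either need the stronger closeness assumption $\eta_n\lesssim\lambda/\sqrt{s}$, or you must control the cross block $\nabla^2\cL$ restricted to rows in $S^c$ and columns in $S$ in an $\ell_\infty$-to-$\ell_\infty$ operator norm (an irrepresentability-type bound), rather than through the localized sparse eigenvalue, which only delivers $\ell_2$-type control of $\mathbf{e}_j^\top\nabla^2\cL\,\bDelta$. It is worth noting that the paper does not resolve this either --- its proof simply asserts $\norm{\nabla\cL(\hbbeta^0)}_\infty<\lambda/2$ with no derivation --- so your attempt to actually prove the bound is the more honest route, but as written the step fails and is the one place where your argument is not yet a proof.
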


\begin{rmk}
$\norm{\hbbeta^0-\bbeta^*}_{\max}\leq\eta_n\lesssim\lambda$ is a mild condition since the order of the left hand side is related to the intrinsic dimension $s$ while $\lambda\asymp\sqrt{\log p/n}$ grows with $p$. Theorem \ref{thm:strongoracle} suggests the strong oracle property of the exact solution of the second stage. 
\end{rmk}

\subsection{Computational Theory}
In this section, we study the computational complexity of TLAMM in terms of the number of iterations needed in each stage. We need an additional Lipschitz condition on the gradient of Cox's  loss function.

\begin{cond}[Lipschitz Condition]\label{cond:lip}
$\norm{\nabla\cL(\bbeta_1)-\nabla\cL(\bbeta_2)}_2\leq\rho_c\norm{\bbeta_1-\bbeta_2}_2$, for $\bbeta_1, \bbeta_2\in B_2(\bbeta^*,R/2)$, where $\rho_c$ is a constant and $R\lesssim\norm{\bbeta^*}_2+\lambda\sqrt{s}$.
\end{cond}


\begin{thm}\label{thm:tscomplexity}
Suppose that Conditions \ref{cond:lse} and \ref{cond:lip} hold and take $\lambda\asymp\sqrt{\log p/n}$. With probability at least $1-p^{-1}$, we need at most $(1+\gamma_u)^2R^2\rho_c^2/\varepsilon_1^2$ LAMM iterations in the first stage to reach $\varepsilon_1$-optimal solution and at most $C_5\log(C_6\sqrt{s\log p/n}/\varepsilon_2)$ LAMM iterations in the second stage to reach $\varepsilon_2$-optimal solution, where $C_5>0$ and $C_6>0$ are two constants.
\end{thm}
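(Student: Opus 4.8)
The plan is to analyze the two stages separately, since LAMM runs in different convexity regimes in each. Writing the stage objective as $F(\bbeta)=\cL(\bbeta)+\lambda\norm{\bbeta}_1$ in the first stage and $\tilde{\cL}(\bbeta)+\lambda\norm{\bbeta}_1$ in the second, the first stage is genuinely convex and therefore only converges sublinearly (giving the $O(\varepsilon_1^{-2})$ count), whereas Condition \ref{cond:lse} makes $\tilde{\cL}$ strongly convex on $\cC(s+\tilde{s},r)$, yielding geometric decay (the $O(\log(1/\varepsilon_2))$ count). Three common ingredients feed both analyses. First, the majorization step gives the sufficient-decrease inequality $F(\bbeta^{k+1})\le F(\bbeta^{k})-\tfrac{\phi^{k+1}}{2}\norm{\bbeta^{k+1}-\bbeta^{k}}_2^2$, immediate from $\bbeta^{k+1}$ minimizing the $\phi^{k+1}$-strongly-convex majorant plus penalty. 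Second, the line search keeps the step parameter bounded: once $\phi^{k+1}\ge\rho_c$ the majorization holds by Condition \ref{cond:lip}, so the accepted value satisfies $\phi_0\le\phi^{k+1}\le\gamma_u\rho_c$. Third, the stopping functional obeys $\omega_\lambda(\bbeta^{k+1})\le(\rho_c+\phi^{k+1})\norm{\bbeta^{k+1}-\bbeta^{k}}_2\le(1+\gamma_u)\rho_c\norm{\bbeta^{k+1}-\bbeta^{k}}_2$ (Lemma \ref{lem:stopping2}), obtained by inserting the subgradient from the update's optimality condition into $\omega_\lambda$ and invoking Lipschitzness. The high-probability qualifier is inherited by conditioning on the event of probability at least $1-p^{-1}$ on which Theorem \ref{thm:minse} gives the LSE bounds and Proposition \ref{prop:maxgrad} gives $\norm{\nabla\cL(\bbeta^*)}_\infty\lesssim\sqrt{\log p/n}$ for $\lambda\asymp\sqrt{\log p/n}$.

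For the first stage I would combine sufficient decrease with a telescoping argument. The standard proximal descent inequality with comparison point $\hat{\bbeta}^1$ (the exact Lasso minimizer) gives $F(\bbeta^{k+1})-F(\hat{\bbeta}^1)\le\tfrac{\phi^{k+1}}{2}\norm{\hat{\bbeta}^1-\bbeta^{k}}_2^2-\tfrac{\phi^{k+1}}{2}\norm{\hat{\bbeta}^1-\bbeta^{k+1}}_2^2$, valid because $\cL$ is convex and $\phi^{k+1}\ge\rho_c$ majorizes. Summing over $k$ (controlling the varying $\phi^{k+1}$ by its uniform bounds) and using monotonicity of $F(\bbeta^{k})$ yields $F(\bbeta^{K})-F(\hat{\bbeta}^1)\le \phi R^2/(2K)$, where $\norm{\bbeta^{1,0}-\hat{\bbeta}^1}_2\le R$ since both $\bbeta^{1,0}=\mathbf{0}$ and $\hat{\bbeta}^1$ lie in $B_2(\bbeta^*,R/2)$ (the latter by Proposition \ref{prop:contraction}, the former because $R\gtrsim\norm{\bbeta^*}_2$). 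Feeding this back into sufficient decrease gives $\tfrac{\phi^{K+1}}{2}\norm{\bbeta^{K+1}-\bbeta^{K}}_2^2\le F(\bbeta^{K})-F(\hat{\bbeta}^1)$, so the step parameter cancels and $\norm{\bbeta^{K+1}-\bbeta^{K}}_2\lesssim R/\sqrt{K}$. The stopping bound then forces $\omega_\lambda\le\varepsilon_1$ once $(1+\gamma_u)\rho_c R/\sqrt{K}\le\varepsilon_1$, i.e. after at most $(1+\gamma_u)^2R^2\rho_c^2/\varepsilon_1^2$ iterations, reproducing the claim after tracking constants.

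For the second stage the sublinear telescoping is replaced by geometric contraction. On $\cC(s+\tilde{s},r)$ the shifted loss is $\kappa_-$-strongly convex, so the proximal-gradient step contracts: the strongly-convex proximal estimate gives $F(\bbeta^{2,k+1})-F(\hat{\bbeta}^2)\le(1-\kappa_-/\phi^{2,k+1})\,(F(\bbeta^{2,k})-F(\hat{\bbeta}^2))$ with factor $1-\kappa_-/(\gamma_u\rho_c)\in(0,1)$, whence the objective gap and, through strong convexity and the stopping bound, $\omega_\lambda$ decay geometrically. Starting from $\bbeta^{2,0}=\tilde{\bbeta}^1$, whose distance to the optimum is controlled by the first-stage statistical rate $\norm{\tilde{\bbeta}^1-\bbeta^*}_2\lesssim\lambda\sqrt{s}\asymp\sqrt{s\log p/n}$ (Proposition \ref{prop:contraction}), geometric decay reaches an $\varepsilon_2$-optimal point after $C_5\log(C_6\sqrt{s\log p/n}/\varepsilon_2)$ iterations, with $C_5\asymp\gamma_u\rho_c/\kappa_-$ depending only on $\kappa_-,\rho_c,\gamma_u$.

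The main obstacle is making this second-stage contraction rigorous, because the strong convexity underpinning it holds only inside the sparse neighborhood $\cC(s+\tilde{s},r)$: I must show the entire sequence $\{\bbeta^{2,k}\}$ stays $(s+\tilde{s})$-sparse and within $\ell_1$-radius $r$ of $\bbeta^*$. The radius part follows from monotone descent confining the iterates to a sublevel set inside the $\ell_1$ ball, but sparsity preservation is delicate: it requires that the soft-thresholding update activate at most $\tilde{s}$ coordinates off the true support, which I would establish by an argument parallel to Proposition \ref{prop:stage1sparsity}, using $\norm{\nabla\cL(\bbeta^*)}_\infty\le\lambda/2$ together with the step-size bound to control the thresholded increments on $S^{c}$. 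Once this invariance is secured, the geometric rate and the logarithmic iteration count follow from the strongly-convex machinery above.
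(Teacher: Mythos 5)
Your proposal follows essentially the same route as the paper: sufficient decrease from the majorization (the paper's Lemma \ref{lem:diff2}), the line-search bound $\phi\le\gamma_u\rho_+$ (Lemma \ref{ls}), the stopping-functional bound $\omega_\lambda(\bbeta^{k+1})\lesssim(1+\gamma_u)\|\bbeta^{k+1}-\bbeta^k\|_2$ (Lemma \ref{lem:stopping2}), a telescoped sublinear rate for the convex first stage (which the paper simply cites from \cite{fan2018lamm}), and a geometric contraction in the second stage secured by an induction showing the iterates remain $(s+\tilde{s})$-sparse and inside the radius-$r$ neighborhood (the paper's Lemmas \ref{lem:stepsparsity} and \ref{lem:3.15}, which play exactly the role of your "argument parallel to Proposition \ref{prop:stage1sparsity}"). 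The only deviations are cosmetic constants — the paper's second-stage bounds are phrased in terms of $\rho_+$ rather than $\rho_c$, and its contraction factor is $1-\kappa_-/(4\phi)$ rather than $1-\kappa_-/\phi$ — neither of which affects the stated iteration counts.
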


The sublinear rate in the first stage is due to the lack of strong convexity of the loss function, since TLAMM could tolerate an arbitrarily bad initialization. Once we enter the second stage, the strong convexity of the loss function allows the algorithm to admit a geometric convergence rate.



\cite{fan2018lamm} approximate the nonconvex loss function using a series of local linear approximations. In each stage, they solve an adaptive Lasso problem using the LAMM algorithm, and then update the tuning parameter using the gradient of the nonconvex penalty function at the latest solution  for finer approximation in the next stage. 
The personalized tuning parameter for each entry gradually eliminates the shrinkage bias caused by the Lasso penalty. As a result, their algorithms requires $T\asymp\log\log p$ stages after the first contraction stage to achieve good approximation and a total number of $(1+\gamma_u)^2R^2\rho_c^2/\varepsilon_1^2+C_5\log\log p\cdot\log(C_6\sqrt{s\log p/n}/\varepsilon_2)$ LAMM iterations to complete the algorithm.

In comparison, TLAMM replaces the $T$ approximation stages with one single stage that directly optimizes the nonconvex problem.  As a consequence, the number of LAMM iterations needed is reduced to $(1+\gamma_u)^2R^2\rho_c^2/\varepsilon_1^2+C_5\log(C_6\sqrt{s\log p/n}/\varepsilon_2)$. This  reduces the computational complexity while maintaining all the good statistical  properties of the original I-LAMM algorithm.

\section{Numerical Results}\label{sec:simu}

In this section, we use numerical experiments to examine the finite-sample performance of our proposed algorithm. We simulate 100 datasets with $n=100, 200, 300, 400$ and $p=20, 50, 100, 200, 400, 800$ in the proportional hazards model 
\begin{equation}
\lambda(t\vert\bx)=\exp(\bbeta^{*\top}\bx),
\end{equation}
where $\bbeta^*\in\mathbb{R}^p$ is a sparse vector with $s_0=\norm{\bbeta^*}_0=10$. For each sample, the censoring time follows an exponential distribution with mean $U\exp(\bbeta^{*\top}\bx)$ where $U$ is a uniformly distributed variable in $[2,3]$. We conduct the experiment in the following settings:

\begin{enumerate}
\item Full simulation: We set all the non-zero entries of $\bbeta_0$ to be 0.8. The covariates $\bx$ are generated from $\cN({\bf 0},\bSigma)$, where $\bSigma$ is a correlation matrix $\bSigma=(\rho_{ij})$ in these forms:
\begin{enumerate}
\item Independent design where $\rho_{ij}=\mathds{1}_{\{i=j\}}$;
\item Constant correlation design with $\rho_{ij}=0.5$ for $i\neq j$ and $\rho_{ij}=1$ for $i=j$;
\item Autoregressive correlation design with $\rho_{ij}=0.95^{\abs{i-j}}$.
\end{enumerate}

With large enough signal, our goal is to show the oracle property of the T-LAMM estimators.

\item Semi-simulation: We set the non-zero entries of $\bbeta_0$ to be $1, 0.9, 0.8, ..., 0.1$. The covariates in this case are sampled from skin cutaneous melanoma (SKCM) dataset in The Cancer Genome Atlas (TCGA, \url{http://cancergenome.nih.gov/}). Detailed introduction of this dataset can be found in Section \ref{sec:data}.

With diversified beta, oracle property is no longer guaranteed. We look to illustrate the variable selection power of TLAMM.
\end{enumerate}

In all the numerical experiments, we take the nonconvex penalty functions to be SCAD and MCP. The $L_1$ penalty is also included in the numerical experiments for comparison purpose. We take $\lambda=c\sqrt{\log p/n}$, in which $c$ is tuned from $0.05\times\{1,2,...,20\}$ at $n=200$ and $p=100$ by 3-fold cross validation.

\subsection{TLAMM with fully simulated data}

In this section, all non-zero entries of $\bbeta_0$ are fixed at 0.8. The covariates $x$ are generated from three different designs of covariance matrices: independent, constant correlation and autoregressive correlation. In this setting, the average censored rate of the samples in 100 repetitions is around 55\%, i.e. about 55\% of samples have already failed at the time of censoring. This number varies with different sample sizes, dimensions and correlation structures in covariates, but we see it lying between 52\% and 58\% in all cases. 

\subsubsection{Weak Oracle}

To verify the proposed weak oracle property, we record the $L_2$ estimation error under different dimension settings and with various penalties. Figure \ref{fig:ts-weakoracle1}, Figure \ref{fig:ts-weakoracle2} and Figure \ref{fig:ts-weakoracle3} record the median results in all three cases.

\begin{figure}[htb!]
\includegraphics[scale=0.45]{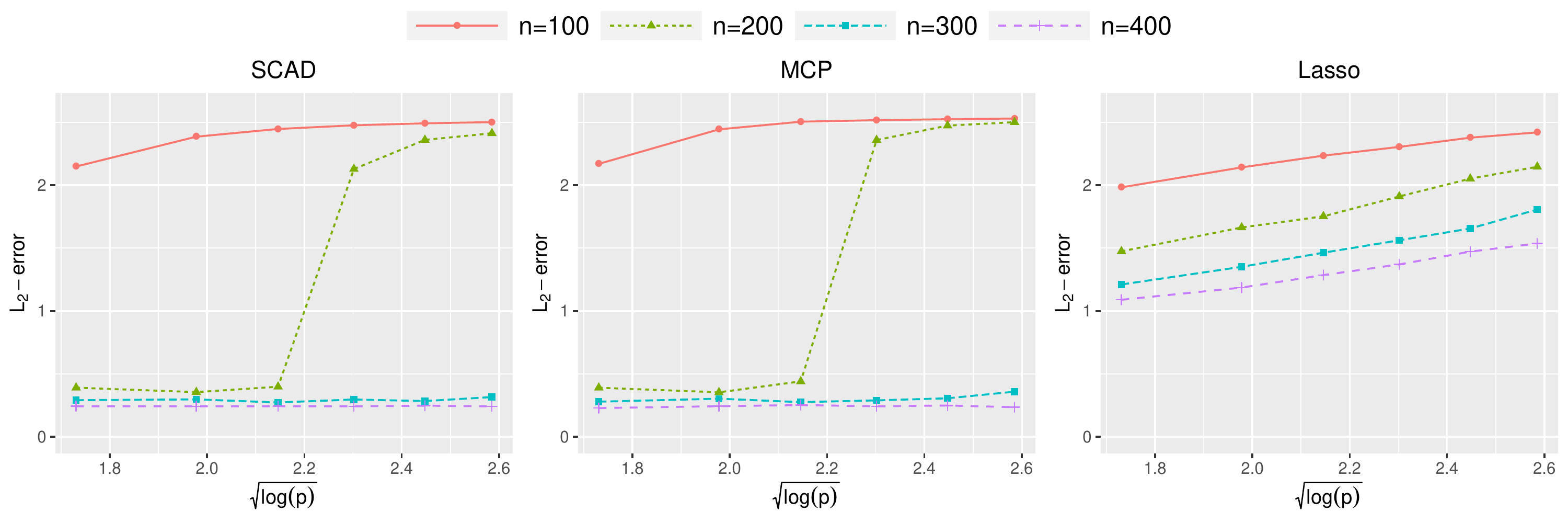}
\caption{$\norm{\hbbeta-\bbeta^*}_2$ versus $\sqrt{\log p}$ when $n=$100, 200, 300 and 400 under independent design.}
\label{fig:ts-weakoracle1}
\end{figure}

\begin{figure}[htb!]
\includegraphics[scale=0.45]{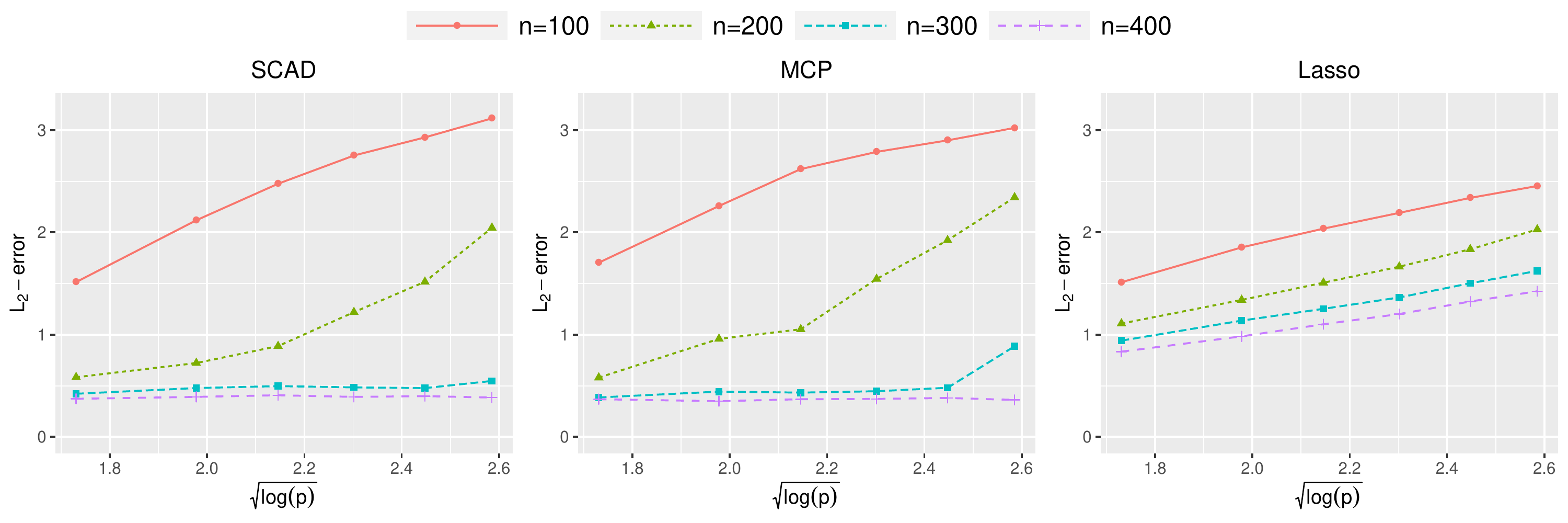}
\caption{$\norm{\hbbeta-\bbeta^*}_2$ versus $\sqrt{\log p}$ when $n=$100, 200, 300 and 400 under constant correlation design.}
\label{fig:ts-weakoracle2}
\end{figure}

\begin{figure}[htb!]
\includegraphics[scale=0.45]{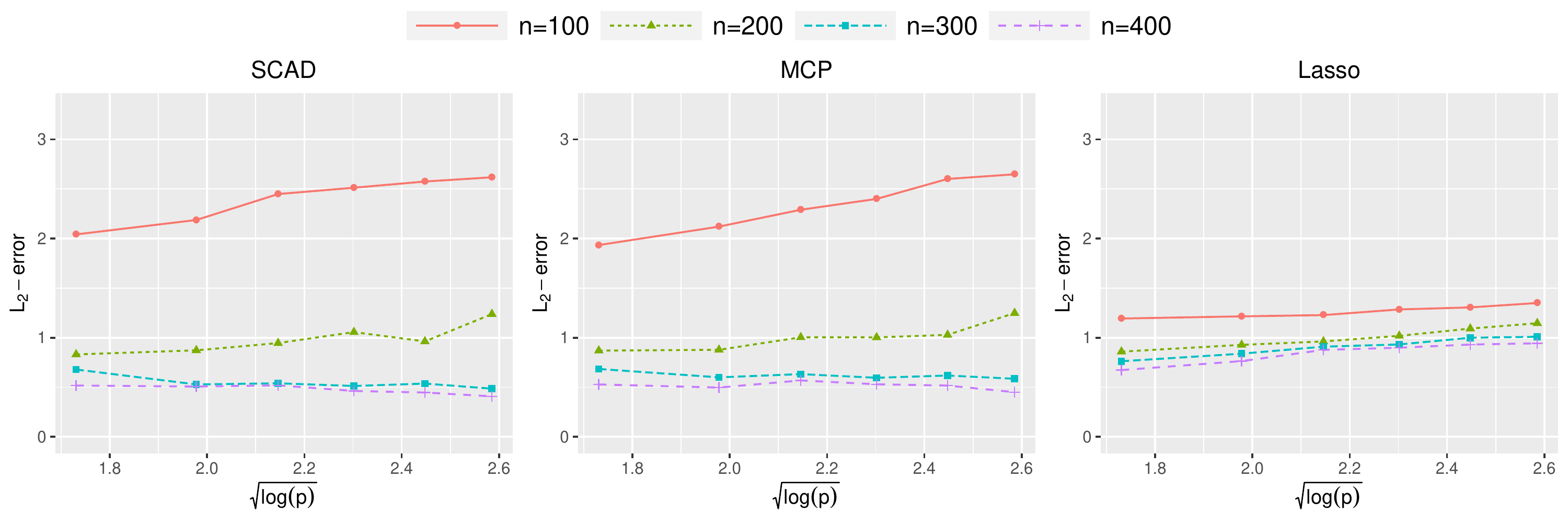}
\caption{$\norm{\hbbeta-\bbeta^*}_2$ versus $\sqrt{\log p}$ when $n=$100, 200, 300 and 400 under autoregressive correlation design.}
\label{fig:ts-weakoracle3}
\end{figure}

 Overall, TLAMM performs similarly under different designs. With nonconvex penalty functions, according to Theorem \ref{thm:weakoracle}, the estimation error is in the order of $\sqrt{s\log s/n}$ and does not grow with $p$ under the minimal signal condition. In the figures, when using SCAD and MCP, the estimation error remains constant as $p$ grows for fixed and large enough $n$. This supports the weak oracle property we proved in Theorem \ref{thm:weak}. When the signal strength is not large enough comparing with the noise, that is when $n$ is small, the oracle rate is no longer achievable by TLAMM and the estimation error grows linearly with $\sqrt{\log p}$. This is best illustrated under the independent design.

Meanwhile, when using Lasso penalty, the oracle rate can not be achieved and the error rate is on the order of $\sqrt{s\log p/n}$. Therefore, for fixed $n$, the slope of the error is $\sqrt{s/n}$ as $\sqrt{\log p}$ grows. This is supported by our simulation results by observing that the estimation error grows linearly with $\sqrt{\log p}$. 

\subsubsection{Variable Selection Property}

To verify the selection property of TLAMM for Cox's proportional hazards regression model, we study the accuracy of variable selection with different dimensionalities, correlation designs and penalties. The true positive rate (also known as sensitivity) and the true negative rate (also known as specificity) are recorded in each repetition. Figure \ref{fig:ts-strongoracle1}, Figure \ref{fig:ts-strongoracle2} and Figure \ref{fig:ts-strongoracle3} summarize the median results.
\begin{figure}[htbp]
\includegraphics[scale=0.45]{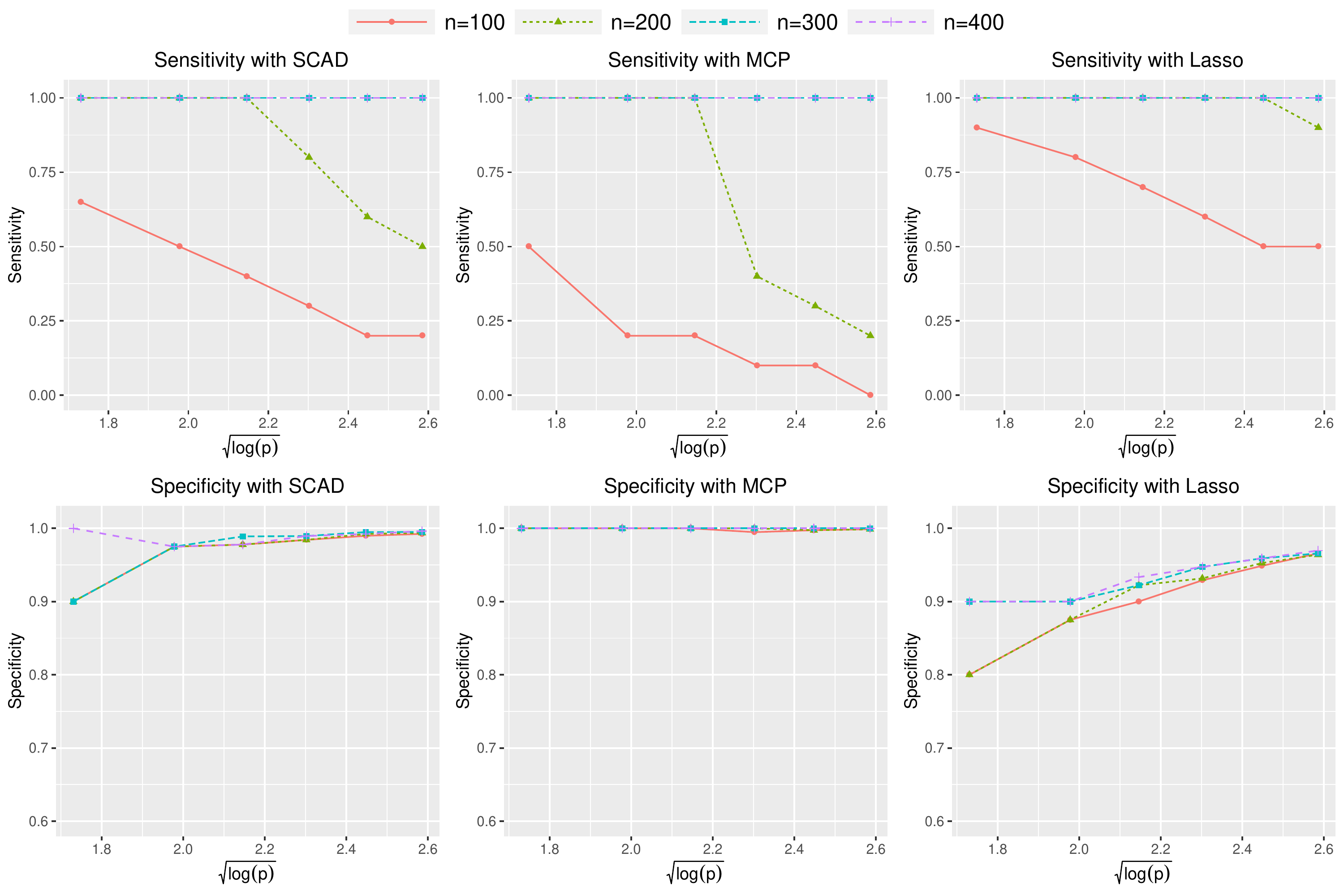}
\caption{Sensitivity and specificity with SCAD, MCP and Lasso under independent design.}
\label{fig:ts-strongoracle1}
\end{figure}

\begin{figure}[htbp]
\includegraphics[scale=0.45]{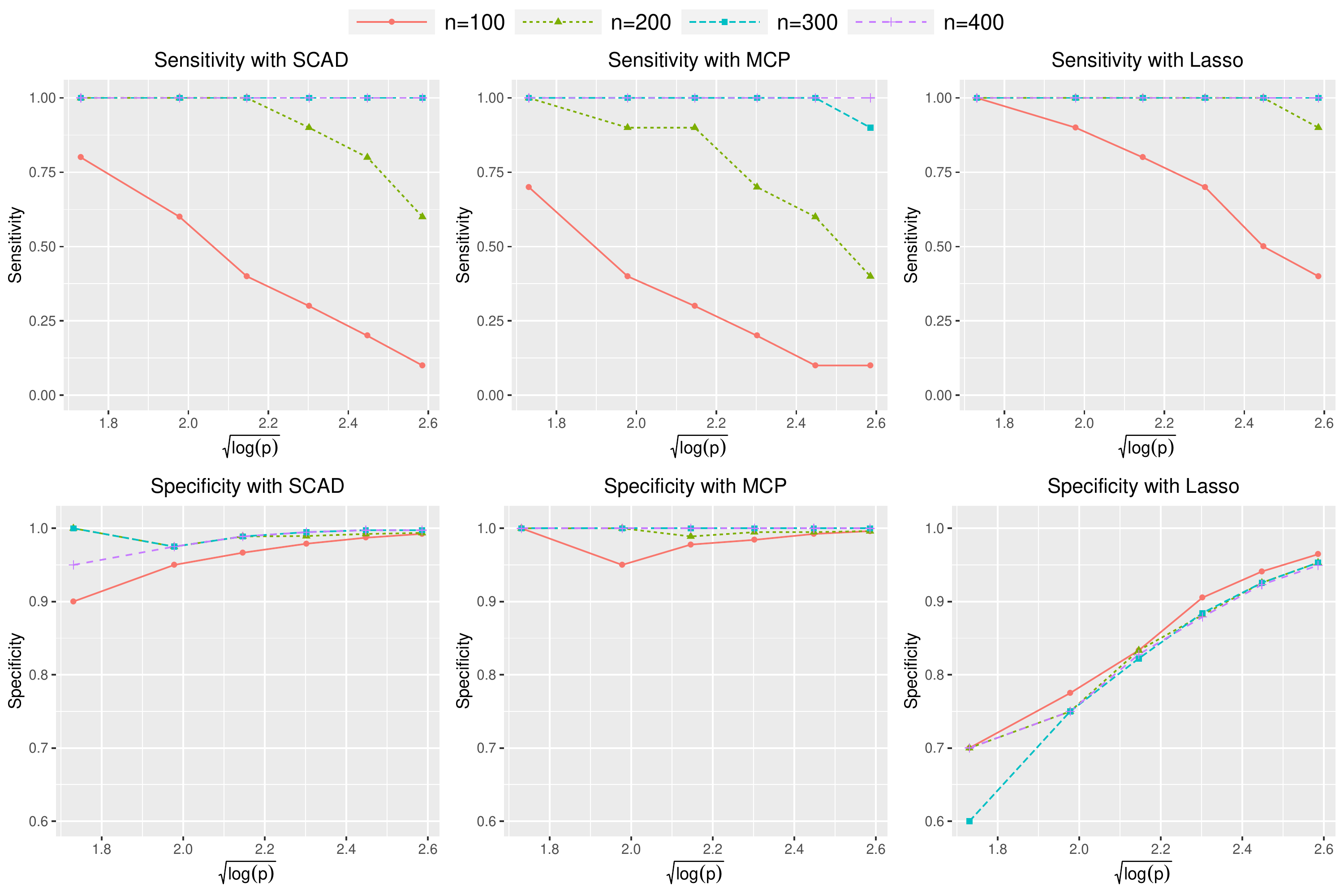}
\caption{Sensitivity and specificity with SCAD, MCP and Lasso under constant correlation design.}
\label{fig:ts-strongoracle2}
\end{figure}

\begin{figure}[htbp]
\includegraphics[scale=0.45]{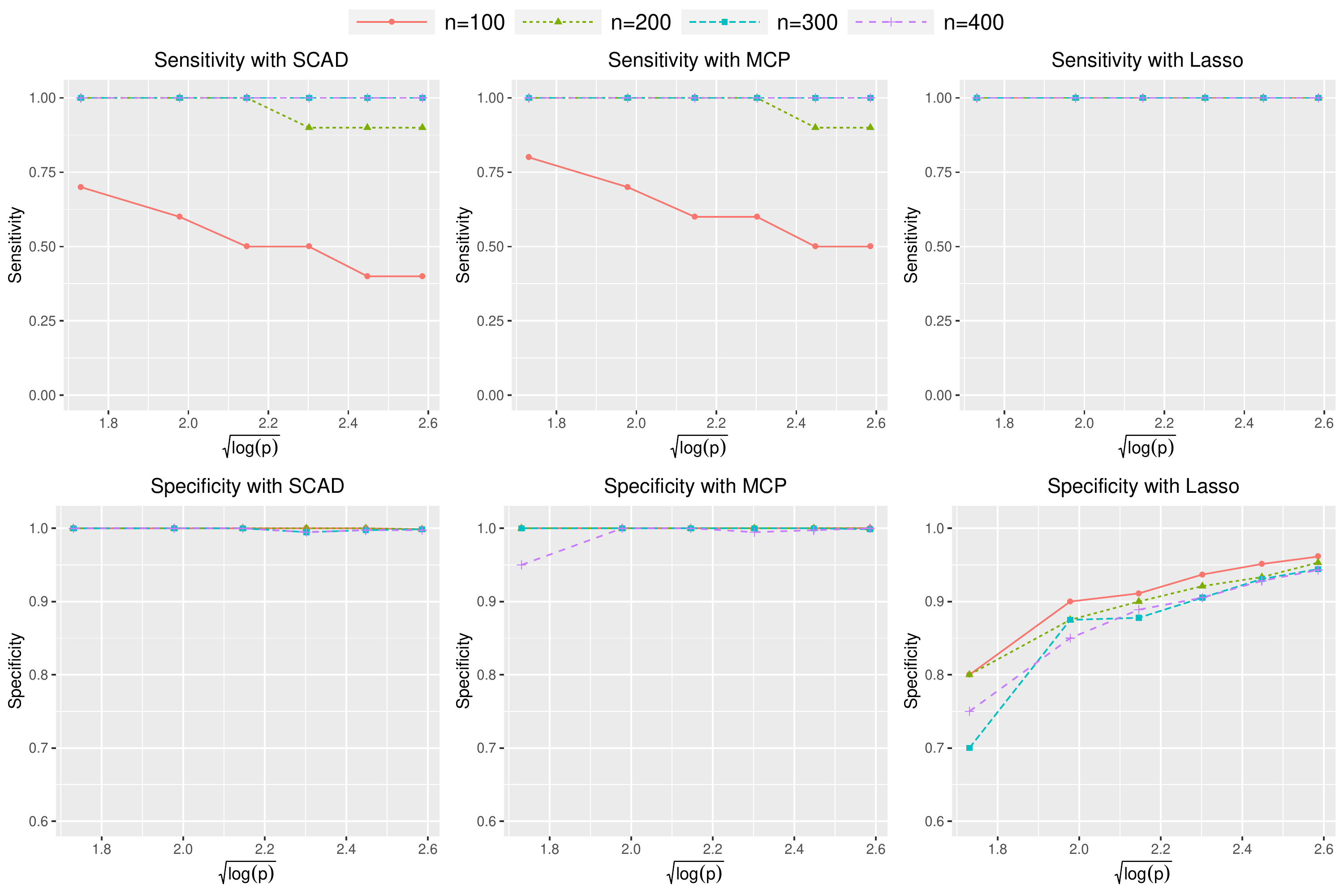}
\caption{Sensitivity and specificity with SCAD, MCP and Lasso under autoregressive correlation design.}
\label{fig:ts-strongoracle3}
\end{figure}

These results show that, when using nonconvex regularizers like SCAD and MCP, both the sensitivity and specificity are 1 or very close to 1 when the sample size is relatively large. Lasso tends to over select and result in undesirable performances in terms of specificity.  TLAMM performs stably in  all three cases.

\subsubsection{Comparison with Other Algorithms}
In this section, we compare the performances of TLAMM with those of  I-LAMM and  other stage-of-the-art algorithms. We also compare the computational complexity of TLAMM and I-LAMM implied by the computational time. Again, Lasso, SCAD and MCP are used. The Lasso estimator and the post-Lasso estimator \citep{belloni2013least} are computed using the R package \texttt{ncvreg} and \texttt{survival}. The SCAD estimators and MCP estimators are computed using R package \texttt{ncvreg}, the I-LAMM algorithms in \cite{fan2018lamm} and our TLAMM algorithm. We also report the performances of the oracle estimator as benchmarks.


All the results in this section are computed  under the independent design. The constant $c$ in $\lambda=c\sqrt{\log p/n}$ is tuned by the $3$-fold cross validation at $n=200$ and $p=100$. The number of nonzero coefficients is fixed at $10$ as the sample size and the dimensionality grow. Each experiment is repeated $100$ times.

Table \ref{tab:cmpr} collects the median $L_2$ estimation error, TP and FP 
when $n=300$ and $p=2400$. Column TP stands for the number of true discoveries and column FP indicates the number of false discoveries. Using the same nonconvex penalty functions, TLAMM and I-LAMM both outperform the coordinate descent algorithms used by \texttt{ncvreg} in terms of the estimation error and the accuracy of variable selection. TLAMM ranks the top in terms of selection accuracy.

\begin{table}[htb!]
\center
\begin{tabular}{ |c|c|c|c| }
 \hline
 Method & $L_2$ error & TP & FP \\ \hline
  Oracle & 0.29 & 10 & 0 \\
  Lasso (ncvreg) & 1.40 & 10 & 116 \\
   Refit (ncvreg) & 10.20 & 10 & 116 \\
         MCP (ncvreg)& 0.67 & 10 & 3 \\
\rowcolor{Gray}
     MCP (TLAMM) & 0.34 & 10 & 0 \\
MCP (I-LAMM) &  0.39 & 10  &  1 \\
  SCAD (ncvreg) & 2.04 & 10 & 14 \\
\rowcolor{Gray}
 SCAD (TLAMM) & 0.36 & 10 & 7 \\
 SCAD (I-LAMM) & 0.32 & 10 & 10 \\

 \hline
\end{tabular}
\caption{The median $L_2$ error, TP and FP of various estimators when n=300 and p=2400.}
\label{tab:cmpr}
\end{table}

Figure \ref{fig:cmpr} shows that TLAMM and I-LAMM outperform the traditional methods in terms of $L_2$ error especially when $p$ is large and coordinate descent algorithm starts to break down. 
This demonstrates the stability of TLAMM and I-LAMM, which is also guaranteed by the theory. 

\begin{figure}[htb!]
\centering
\includegraphics[scale=0.55]{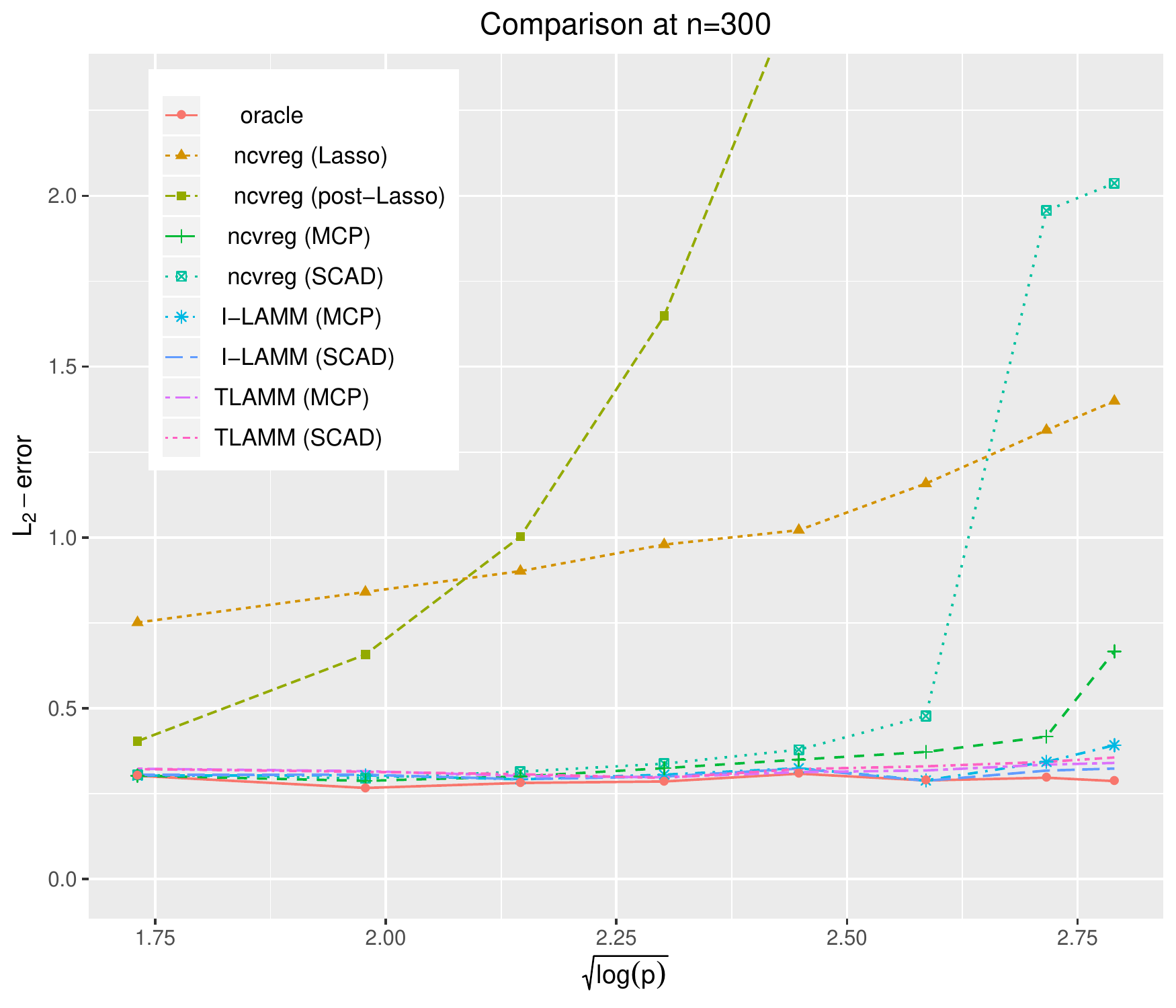}
\caption{Median $L_2$ error with fixed sample size and various dimensions. TLAMM and I-LAMM estimators achieve oracle error rate while the $L_2$ error of the traditional estimators grow with dimension.}
\label{fig:cmpr}
\end{figure}

It is also supported by the numerical experiment that TLAMM reduces the computational complexity of I-LAMM. In Figure \ref{fig:time}, we plot the average running time of each repetition using the two algorithms with the same hyper-parameters and cpus. The dimension of the problem is still $n=300$. We set $\gamma_u=2$, $\phi_0=0.1$ and $\varepsilon_c=\varepsilon_t=0.002$. For I-LAMM, we carry an extra parameter of maximum number of tightening subproblems allowed and it is set to 20. Results show that TLAMM in general takes about 30\% less computational time than I-LAMM. 

\begin{figure}[htb!]
\centering
\includegraphics[scale=0.65]{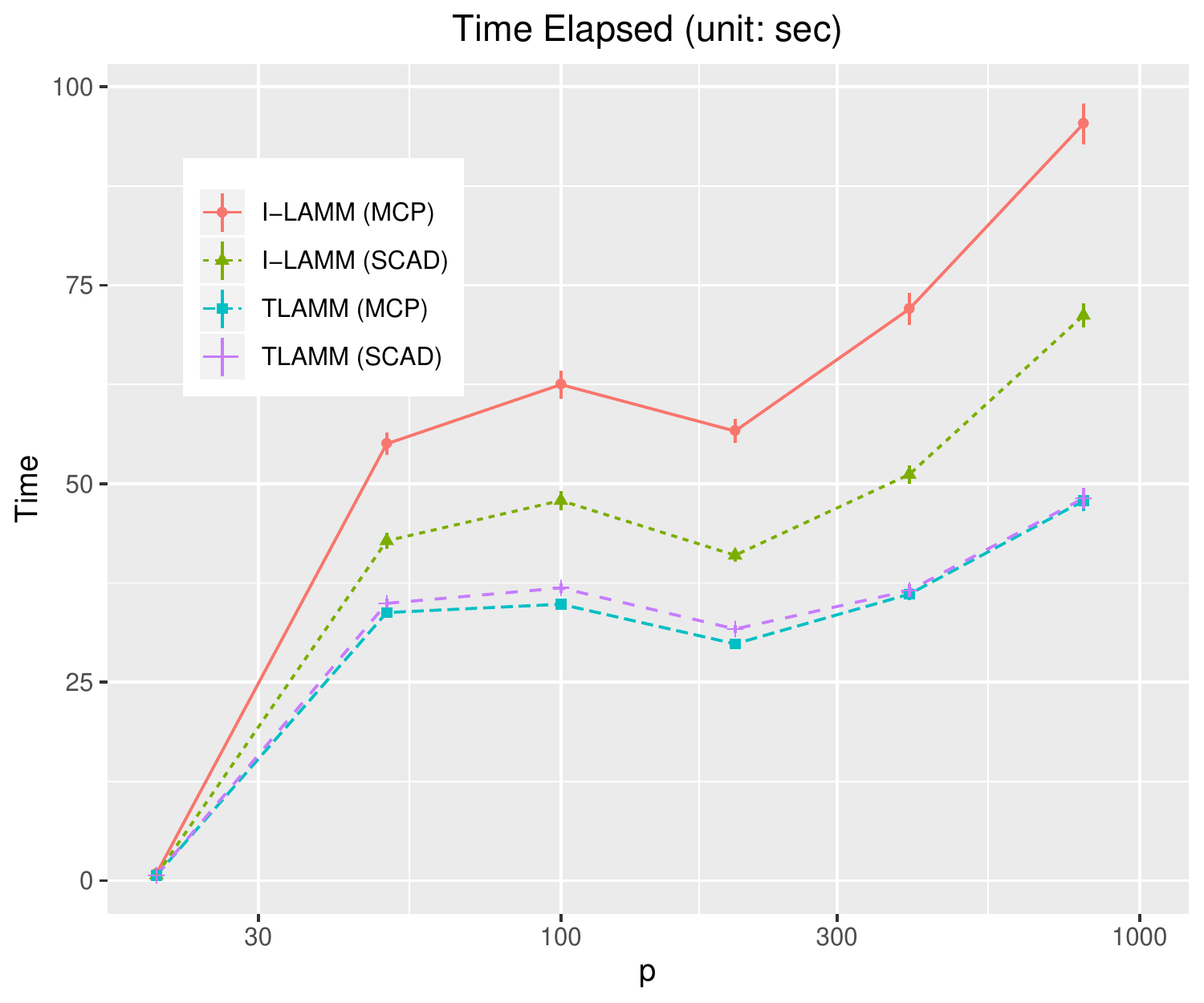}
\caption{Average computational time of each repetition at n=300 with error bars showing one standard deviation. TLAMM takes about 30\% less computational time than I-LAMM.}
\label{fig:time}
\end{figure}

\subsection{TLAMM with semi-simulated data}

In this section, the non-zero entries of $\bbeta_0$ are diversified using $1, 0.9, 0.8, ..., 0.1$. The covariates $\bx$ are empirical: they are sampled from the skin cutaneous melanoma (SKCM) dataset in The Cancer Genome Atlas. In this setting, the average censored rate of the samples is around 70\%.

As showed in Figure \ref{fig:ts-diverr} and Figure \ref{fig:ts-divselect}, when the magnitudes of the true non-zero coefficients vary and decay, the weak oracle property is no longer guaranteed. The $L_2$ estimation error grows with dimension $p$. Meanwhile, the sensitivity also decreases as $p$ increases. However, the specificity is still very high.

To better understand the variable selection accuracy of the diversified coefficients, we check the rate of being selected by magnitude at $n=400$ in Figure \ref{fig:ts-divselectrate}. While sensitivity decreases as $p$ grows, TLAMM is able to consistently select the variables with sufficiently large coefficients. This again verifies the proposed theorems in Section \ref{sec:twostage}.

\begin{figure}[htbp]
\includegraphics[scale=0.45]{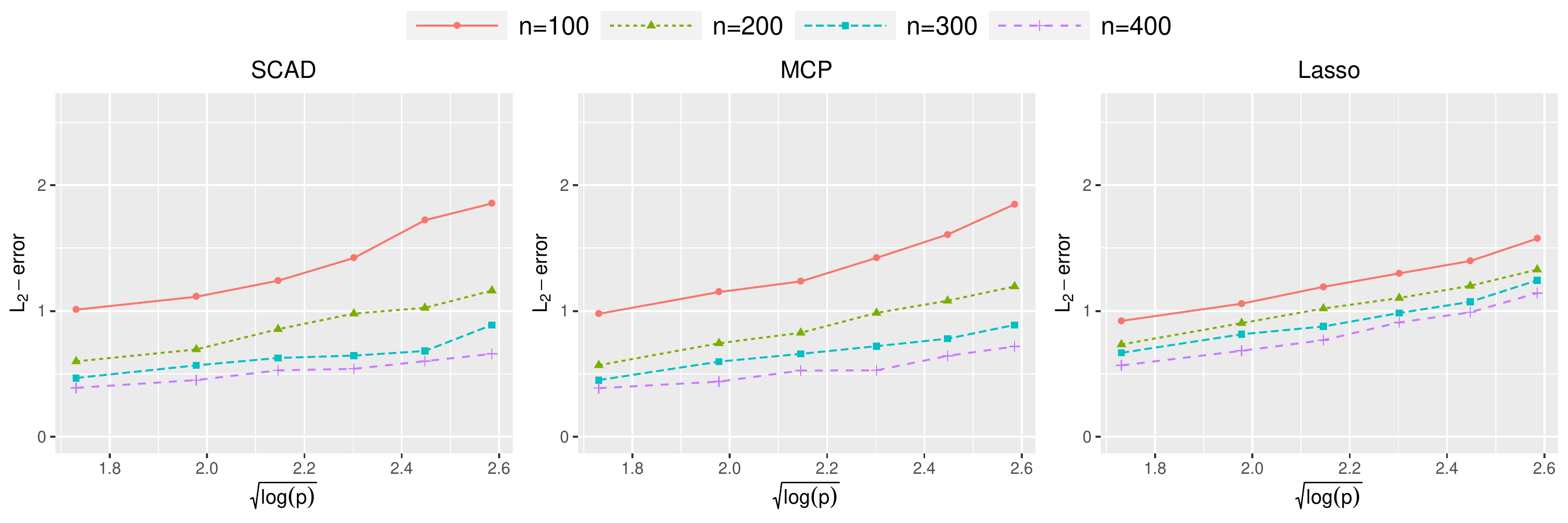}
\caption{$\norm{\hbbeta-\bbeta^*}_2$ versus $\sqrt{\log p}$ when $n=$100, 200, 300 and 400 with empirical covariates.}
\label{fig:ts-diverr}
\end{figure}

\begin{figure}[htbp]
\includegraphics[scale=0.45]{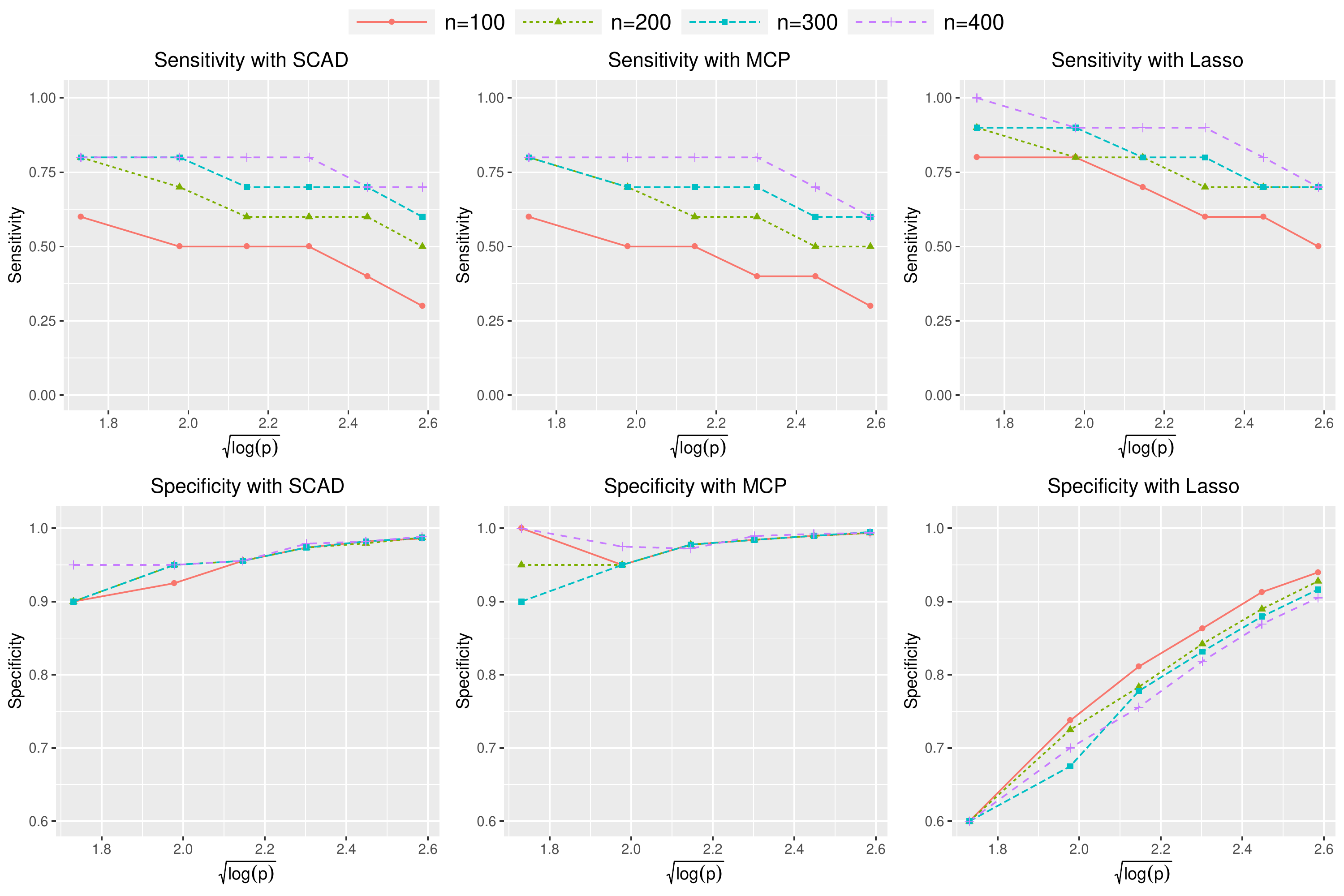}
\caption{Sensitivity and specificity with SCAD, MCP and Lasso with empirical covariates.}
\label{fig:ts-divselect}
\end{figure}

\begin{figure}[htbp]
\includegraphics[scale=0.45]{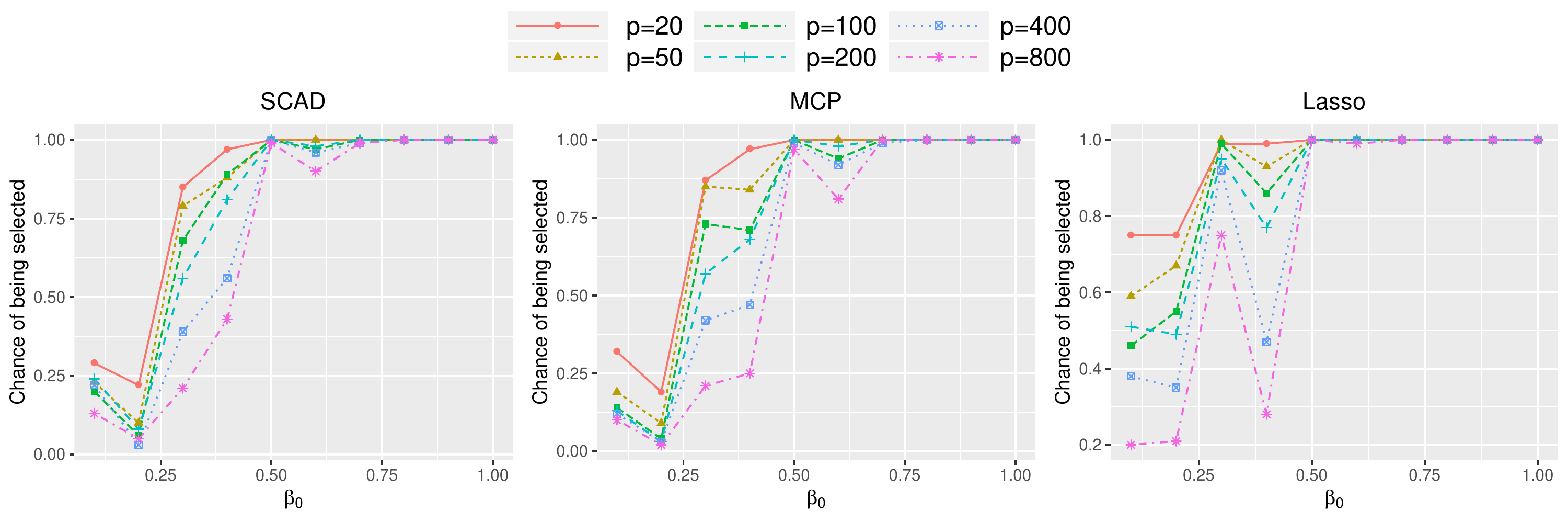}
\caption{The average selection rate of each coefficient at $n=400$.}
\label{fig:ts-divselectrate}
\end{figure}

\section{Real Data Analysis}\label{sec:data}

We apply TLAMM to the skin cutaneous melanoma (SKCM) dataset in The Cancer Genome Atlas (TCGA, \url{http://
cancergenome.nih.gov/}) . TCGA provides comprehensive profiling data on more than thirty cancer types, allowing researchers to study the roles of genes in various cancers. The dataset we used in the analysis is downloaded from UCSC Xena (\url{https://xena.ucsc.edu/public/}). 
We acquire $20,531$ mRNA expression and clinical data on a total of $461$ patients, with $155$ observed failures. For each patient, the censoring time and the censoring results are also included in the dataset.

The purpose of our study is to select the genes that are strongly associated with the survival time of melanoma patients. After removing the genes and patients with missing values, we end up having 12207 mRNA expression of 457 patients. We pre-select the top 20 genes associated with cutaneous melanoma according to a meta-analyses of over 145 papers \citep{chatzinasiou2011comprehensive}. A list of these genes could be found at \url{http://bioinformatics.cing.ac.cy/MelGene/}. We then screen out 1000 genes with the highest variability. These 1000 genes and the 20 pre-selected genes are combined together into a refined pool of 1017 
genes (since there are 3 overlapping genes). Besides the mRNA expressions, the information of gender and age are also included into the model. Our study finally utilize 457 observations and each observation has 1019 features. 

We use TLAMM to fit the regularized Cox's model to SKCM dataset using SCAD  and MCP. The tuning parameters are selected by 3-fold cross validation. The selection results are shown in Table \ref{tab:tcga}. The variables selected with the two penalty functions are almost the same: MCP selects 6 variables and SCAD selects 5. Age appears in both selected models, which is intuitive since younger patients usually have larger chances in defeating diseases. None of the 20 pre-selected genes are selected by the model.

We shall point out that a major difference between our analysis and the studies included in the meta-analysis \citep{chatzinasiou2011comprehensive} is that most of the medical research focus on marginal effect of an individual gene while we estimate the effects of gene expressions jointly, resulting to different results. 

\begin{table}[htbp]
\centering
{\renewcommand\arraystretch{1.25}
\begin{tabular}{|c|c|c|}
\hline
{\bf Penalty} & \multicolumn{2}{c|} {\bf Selected Variables} \\ \hline \hline
SCAD & \multicolumn{2}{p{8cm}|}{\raggedright Age, BOK, HSPB6,  ATF7IP2, GBP2} \\ \hline
MCP & \multicolumn{2}{p{8cm}|}{\raggedright Age, BOK, HSPB6, MBP, ATF7IP2, GBP2} \\ \hline
\end{tabular}}
\caption{Genes selected by TLAMM using skin cutaneous melanoma dataset. The genes from pre-selected pool are bolded. The genes being selected in both models are underlined.}\label{tab:tcga}
\end{table}

We also study the predictability of the fitted TLAMM models. We randomly divide 457 patient into training set and testing set, and use the model fitted on the training data to predict the hazards of the samples in the testing set. This is repeated for 20 times and the prediction results are recorded. Please notice that we do not re-calibrate $\lambda$ for each random split. Instead, we continue to use the same $\lambda$ we used in variable selection using the full sample data. 

We use the percentage of ``concordant pairs" (concordance index) to measure how well they predict. We list all possible pairs of the patients, and for each pair we decide whether they are concordant, discordant or indeterminate by comparing the prediction results and their outcomes. A pair is concordant, if the object we predict to be of lower risk survives longer. Meanwhile, a pair is discordant if the object we predict to be of higher risk survives longer. We remind the readers that not all pairs fall into these two categories. More often the pairs are indeterminate, meaning that we predict the risk of the two objects but we do not know which fails first. The concordance index is defined as \[\text{concordance index} = \frac{\text{number of concordant pairs}}{\text{number of concordant pairs} + \text{number of discordant pairs}}.\]

\begin{table}[htbp]
\centering
\begin{tabular}{|c|c|c|c|}
\hline
 & SCAD & MCP & LASSO \\ \hline
TLAMM & 0.612 & 0.602 & 0.618 \\ 
I-LAMM &  0.612 & 0.597 & 0.618 \\ 
ncvreg & 0.621 & 0.614 & 0.617 \\ \hline
 \end{tabular}
\caption{Average concordance index using TLAMM, I-LAMM and ncvreg and with different penalty functions in 20 repetitions.}\label{tab:tcga}
\end{table}

The concordance index is around 0.6 for all methods, suggesting the good predictability of the fitted model using both TLAMM and I-LAMM.

\section{Discussion}\label{sec:dis}

In this paper, we propose a two-stage algorithmic approach called TLAMM to fit the  Cox's proportional hazards model in high dimensions. The proposed algorithm achieves  both statistical and computational guarantees: we show that  the complexity of TLAMM is well controlled while the consistency of the estimator and the accuracy of the selection are guaranteed. Moreover, TLAMM has the potential to be extended to other common models such as linear regression and logistic regression, as long as the loss function shares a similar geometry locally around the true signal.

\bibliographystyle{ims}
\bibliography{ref}

\newpage
\appendix

We present the proof of Theorem \ref{thm:minse}, Theorem \ref{thm:strongoracle}, Theorem \ref{thm:tscomplexity}, Proposition \ref{prop:contraction}, Proposition \ref{prop:maxgrad} and Proposition \ref{prop:decomposition2} in the appendix. The proof of Proposition \ref{prop:stage1sparsity} could be found in \cite{fan2018lamm}.

We first introduce some notations we use in throughout the appendix. We define the shifted loss function $\ti{\cL}(\bbeta)=\cL(\bbeta)+p_{\lambda}(\bbeta)-\lambda\norm{\bbeta}_1$, with which the penalized loss function $\cL(\bbeta)+p_{\lambda}(\bbeta)$ could be written as $\ti{\cL}(\bbeta)+\lambda\norm{\bbeta}_1$, and is also denoted as $F(\bbeta,\lambda)$ or $F(\bbeta)$ for simplicity. In the second stage, we write $\Psi_{\ti{\cL},\lambda,\phi}(\bbeta_1,\bbeta_2)=\Psi_{\ti{\cL},\phi}(\bbeta_1,\bbeta_2)+\lambda\norm{\bbeta_1}_1$ as the penalized majorize function and omit $\ti{\cL}$ in the subscript when there is no ambiguity. By carefully selecting the parameter in the penalty function, the strong convexity of $\cL(\cdot)$ within local $\ell_1$ cone is preserved with $\ti{\cL}(\cdot)$. When there is no ambiguity, we drop the dependence on sparsity level and the radius and write $\rho_+(2s+2\ti{s},r)$, $\rho_-(2s+2\ti{s},r)$, $\kappa_+(2s+2\ti{s},r)$ and $\kappa_-(2s+2\ti{s},r)$ as $\rho_+$, $\rho_-$, $\kappa_+$ and $\kappa_-$. 

The proof presented here are mainly about the second stage estimators. For each LAMM estimator within the second stage, we omit the stage number in the superscript and write $\bbeta^{2,k}$ as $\bbeta^k$ and write $\phi^{\ell,k}$ as $\phi$ for simplicity when there is no ambiguity.

\begin{section}{Technical Lemmas}
All the lemmas collected here are for the second stage if without specification.

\begin{lemma}[$\ell_1$ Cone Property For Approximate Solution in the Second Stage]\label{lem:akkt1}
If $\|\nabla\cL(\bttc)\|_\infty+\varepsilon\leq \lambda$ and $\norm{\bbeta^*}_{\min}\geq a_1\lambda$, we must have
	\begin{equation}
	\|(\tbt-\bttc)_{S^c}\|_1\leq \frac{\|\nabla\cL(\bttc)\|_\infty+\varepsilon}{\lambda-(\|\nabla\cL(\bttc)\|_\infty+\varepsilon)}\|{(\tbt-\bttc)}_{S}\|_1,
	\label{eq:cone}
	\end{equation}
where $\tilde{\bbeta}$ is a stage estimator.
\end{lemma}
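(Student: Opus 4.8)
The plan is to combine the approximate first-order optimality of the second-stage estimator with the local convexity of $\ti{\cL}$ and the fact that the folded-concave penalty has vanishing derivative on the large signals. Since $\tbt$ is a second-stage $\varepsilon$-optimal solution, the stopping rule \eqref{eq:stopping} produces a subgradient $\bxi\in\partial\|\tbt\|_1$ such that $\bm{g}:=\nabla\ti{\cL}(\tbt)+\lambda\bxi$ satisfies $\|\bm{g}\|_\infty\le\varepsilon$. Writing $\bDelta:=\tbt-\bttc$, I would first record that $\bDelta$ is supported on $S\cup\supp(\tbt)$, which has at most $2s+2\ti{s}$ elements, and that the whole segment $\bttc+t\bDelta$, $t\in[0,1]$, stays in $\cC(2s+2\ti{s},r)$. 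Condition~\ref{cond:lse} then gives $\bDelta^\top\nabla^2\ti{\cL}(\bttc+t\bDelta)\bDelta\ge\kappa_-\|\bDelta\|_2^2\ge0$, so $t\mapsto\langle\nabla\ti{\cL}(\bttc+t\bDelta),\bDelta\rangle$ is nondecreasing and the gradient-monotonicity inequality
\[
\langle \bm{g}-\lambda\bxi-\nabla\ti{\cL}(\bttc),\,\bDelta\rangle\ge 0
\]
holds.

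Next I would evaluate $\nabla\ti{\cL}(\bttc)=\nabla\cL(\bttc)+\nabla\{p_\lambda-\lambda\|\cdot\|_1\}(\bttc)$ coordinatewise, which is where the minimal-signal hypothesis enters. For $j\in S$ we have $|\beta^*_j|\ge\|\bbeta^*\|_{\min}\ge a_1\lambda$, so Condition~\ref{cond:concave}(3) together with continuity of $p'_\lambda$ gives $p'_\lambda(|\beta^*_j|)=0$ and hence $\nabla_j\ti{\cL}(\bttc)=\nabla_j\cL(\bttc)-\lambda\,\sgn(\beta^*_j)$; for $j\in S^c$ the shifted penalty is differentiable at $0$ with derivative $0$ (because $p'_\lambda(0+)=\lambda$), so $\nabla_j\ti{\cL}(\bttc)=\nabla_j\cL(\bttc)$. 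Substituting and splitting the inner product over $S$ and $S^c$, using $\xi_j\Delta_j=|\Delta_j|$ on $S^c$ (where $\Delta_j=\tbt_j$), I would isolate the term $\lambda\|\bDelta_{S^c}\|_1$ and obtain
\[
\lambda\|\bDelta_{S^c}\|_1\le \langle \bm{g},\bDelta\rangle-\langle\nabla\cL(\bttc),\bDelta\rangle-\lambda\sum_{j\in S}\bigl(\xi_j-\sgn(\beta^*_j)\bigr)\Delta_j .
\]

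The crux, and the step I expect to require the most care, is showing that the cross term $\sum_{j\in S}(\xi_j-\sgn(\beta^*_j))\Delta_j$ is nonnegative, so that $-\lambda(\cdots)\le0$ and the penalty contributes nothing on $S$. I would argue coordinatewise: if $\tbt_j\ne0$ then $\xi_j=\sgn(\tbt_j)$ and the summand is $(\sgn(\tbt_j)-\sgn(\beta^*_j))(\tbt_j-\beta^*_j)\ge0$ by monotonicity of $\sgn(\cdot)$; if $\tbt_j=0$ then $\Delta_j=-\beta^*_j$ and the summand equals $|\beta^*_j|-\xi_j\beta^*_j\ge0$ since $|\xi_j|\le1$. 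This is exactly where the large-signal structure is used: had $p'_\lambda(|\beta^*_j|)$ been nonzero on $S$, this term would not collapse and the resulting cone would carry a spurious $\lambda\|\bDelta_S\|_1$, destroying the clean ratio in \eqref{eq:cone}.

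Finally I would bound the two remaining terms crudely by $\langle\bm{g},\bDelta\rangle\le\varepsilon\|\bDelta\|_1$ and $-\langle\nabla\cL(\bttc),\bDelta\rangle\le\|\nabla\cL(\bttc)\|_\infty\|\bDelta\|_1$, giving
\[
\lambda\|\bDelta_{S^c}\|_1\le\bigl(\|\nabla\cL(\bttc)\|_\infty+\varepsilon\bigr)\bigl(\|\bDelta_S\|_1+\|\bDelta_{S^c}\|_1\bigr).
\]
Collecting the $\|\bDelta_{S^c}\|_1$ terms and invoking the hypothesis $\|\nabla\cL(\bttc)\|_\infty+\varepsilon\le\lambda$ to keep the coefficient $\lambda-(\|\nabla\cL(\bttc)\|_\infty+\varepsilon)$ positive then yields \eqref{eq:cone}. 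The only genuinely nontrivial inputs are the local convexity along the segment (handled by the LSE condition and the sparsity of the second-stage iterate) and the sign monotonicity that annihilates the $S$-penalty term.
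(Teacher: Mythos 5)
Your proposal is correct and follows essentially the same route as the paper's proof: both use the approximate stationarity of $\tbt$, the nonnegativity of the quadratic form $\bDelta^\T\nabla^2\ti{\cL}\,\bDelta$ along the segment joining $\bttc$ and $\tbt$ (you phrase it as gradient monotonicity, the paper via the mean value theorem), the minimal-signal condition to reduce $\nabla\ti{\cL}(\bttc)$ to $\nabla\cL(\bttc)-\lambda\bxi^*$, sign monotonicity to discard the penalty cross term on $S$, and H\"older to finish. Your coordinatewise handling of the shifted penalty on $S^c$ and the boundary case $|\beta^*_j|=a_1\lambda$ is slightly more explicit than the paper's, but the argument is the same.
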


\begin{proof}[Proof of Lemma \ref{lem:akkt1}]
Lemma \ref{lem:akkt1} depicts the $\ell_1$ cone property for the approximate solution in the second stage.

For any $\ti{\bxi}\in\partial \|\tbt\|_1$, let  $\mbu=\nabla\cL(\tbt)+p'_{\lambda}(\tbt)=\nabla\tilde{\cL}(\tbt)+\lambda\ti{\bxi}$. By the Mean Value theory,
 there exists a $\gamma\in[0,1]$, such that $\nabla\tilde{\cL}(\tilde\btt)-\nabla\tilde{\cL}(\bttc)=\big[\nabla^2\tilde{\cL}\big(\gamma\bttc+(1-\gamma)\tbt\big)\big]\big(\tbt-\bttc\big)$. Write $\Hb=\nabla^2\tilde{\cL}\big(\gamma\bttc+(1-\gamma)\tbt\big)$. Then we have
	\begin{align*}
	\big\langle\nabla\tilde{\cL}(\tilde\btt)\!+\!\lambda\tilde{\bxi}, \tilde\btt\!-\!\bttc\big\rangle&\!=\!\big\langle\nabla\tilde{\cL}(\bttc)\!+\! \bH(\ti \btt-\bttc)+\lambda\ti{\bxi}, \tilde\btt\!-\!\bttc\big\rangle.\\
	&\!\leq\! \|\mbu\|_\infty\|\tbt\!-\!\btt\|_1\notag
	\end{align*}
Given that $\norm{\bbeta^*}_{\min}\geq a_1\lambda$, $\nabla\tilde{\cL}(\bbeta^*)=\nabla\cL(\bbeta^*)-\lambda\bxi^*$ where $\bxi^*\in\partial \norm{\bbeta^*}_1$ since $p'_{\lambda}(\bbeta^*)=\bf 0$. Using  the fact  $(\tbt-\bttc)^T\Hb(\tbt-\bttc)\geq 0$, we have
\begin{equation}
0\leq \|\mbu\|_\infty\|\tbt-\bttc\|_1- \underbrace{\big\langle\nabla\cL(\bttc), \tbt-\bttc\big\rangle}_{\Rom{1}} -\underbrace{\big\langle\lambda\ti{\bxi}-\lambda\bxi^*, \tbt-\bttc\big\rangle}_{\Rom{2}}.
\label{eq:e7}
\end{equation}
Using a similar argument in the proof of Proposition \ref{prop:decomposition2}, we have
	$
	\Rom{1}\geq -\|\nabla\cL(\bttc)\|_\infty\|\tbt-\btt\|_1,
	$
and
	\begin{equation}
	\begin{split}
	\Rom{2}&=\big\langle\lambda\ti{\bxi}-\lambda\bxi^*,\tbt-\bttc\big\rangle \\
	&=\big\langle(\lambda\ti{\bxi}-\lambda\bxi^*)_{S^c},(\tbt-\bttc)_{S^c}\big\rangle+\big\langle(\lambda\ti{\bxi}-\lambda\bxi^*)_{S},(\tbt-\bttc)_{S}\big\rangle \\
	&\geq \lambda\|(\tbt-\bttc)_{S^c}\|_1.
	\end{split}
	\label{eq:a3}
	\end{equation}
In \eqref{eq:a3}, $\ti{\bxi}-\bxi^*=\ti{\bxi}$ have the same sign as $\tbt-\bttc=\tbt$ on $S^c$, making the first inner product equal to $\lambda\|(\tbt-\bttc)_{S^c}\|_1$ while $\ti{\bxi}-\bxi^*$ have the same sign as $\tbt-\bttc$ on $S$, making the second inner-product non-negative.
Plugging \eqref{eq:a3} into (\ref{eq:e7}) and taking infimum with respect to $\ti{\bxi}\in \partial \|\tbt\|_1$ yields
	\$\label{}
	0
	&\leq  -(\lambda-(\|\nabla\cL(\bttc)\|_\infty+\omega_{\lambda}(\tbt)))\|(\tbt-\bttc)_{S^c}\|_1\\
	&\hspace{1cm}+(\|\nabla\cL(\bttc)\|_\infty+\omega_{\lambda}(\tbt))\|(\tbt-\bttc)_{S}\|_1,
	\$
or equivalently	
	\$
	\|(\tbt-\bttc)_{S^c}\|_1 &\leq \frac{\|\nabla\cL(\bttc)\|_\infty+\omega_{\lambda}(\tbt)}{\lambda-(\|\nabla\cL(\bttc)\|_\infty+\omega_{\lambda}(\tbt))} \|(\tbt-\bttc)_{S}\|_1
	\$
Using the stopping criterion, i.e.  $\omega_{\lambda}(\tbt)\leq \varepsilon$, we have that
	\$
	\|(\tbt-\bttc)_{S^c}\|_1 \leq \frac{\|\nabla\cL(\bttc)\|_\infty+\varepsilon}{\lambda-(\|\nabla\cL(\bttc)\|_\infty+\varepsilon)} \|(\tbt-\bttc)_{S}\|_1
	\$
	Therefore we proved the desired result.
\end{proof}

\begin{lemma}\label{ls}
 Suppose the same conditions in Theorem \ref{thm:tscomplexity} hold.
	Assume $\btt^{k+1},\btt^{k}\in B_2(r,\bttc)$ such that  $\max\{\|\btt^{k+1}_{S^c}\|_0,\|\btt^{k}_{S^c}\|_0\}\leq\ti s$.  For the LAMM algorithm, we have
	\$
	\rho_-(2s+2\ti s,r)\leq\phi\leq \gamma_u\rho_+( 2s+2\ti s, r).
	\$
\end{lemma}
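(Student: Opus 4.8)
The plan is to turn the line-search acceptance test of the second-stage LAMM step into a single quadratic-form inequality for the Hessian of $\ti{\cL}$, and then to sandwich that quadratic form by the localized sparse eigenvalues. Write $\mbu=\btt^{k+1}-\btt^{k}$. By hypothesis each of $\btt^{k+1},\btt^{k}$ has at most $\ti s$ nonzero coordinates off $S$ and at most $s$ on $S$, so each is $(s+\ti s)$-sparse and their difference satisfies $\|\mbu\|_0\le s+2\ti s\le 2s+2\ti s$. Moreover, since the constraint set is convex and contains both iterates, every point $\xi$ on the segment $[\btt^{k},\btt^{k+1}]$ lies in the same localized region and has support inside $S\cup\supp(\btt^{k})\cup\supp(\btt^{k+1})$; hence $\xi\in\cC(2s+2\ti s,r)$, the region over which the LSE of $\ti{\cL}$ are defined. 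This membership fact is what makes the LSE condition applicable, and establishing it carefully — including passing between the $\ell_2$ ball appearing in the hypothesis and the $\ell_1$ radius in the LSE definition via the sparsity of the iterates — is the step I expect to need the most care.

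Next I would apply the second-order mean value theorem to $\ti{\cL}$, exactly as in the proof of Lemma \ref{lem:akkt1}: there is a $\xi\in[\btt^{k},\btt^{k+1}]$ with
\[
\ti{\cL}(\btt^{k+1})=\ti{\cL}(\btt^{k})+\langle\nabla\ti{\cL}(\btt^{k}),\mbu\rangle+\tfrac12\,\mbu^{\top}\nabla^2\ti{\cL}(\xi)\,\mbu.
\]
Subtracting the isotropic majorant $\Psi_{\ti{\cL},\phi}(\btt^{k+1};\btt^{k})=\ti{\cL}(\btt^{k})+\langle\nabla\ti{\cL}(\btt^{k}),\mbu\rangle+\tfrac{\phi}{2}\|\mbu\|_2^2$ shows the acceptance test $\ti{\cL}(\btt^{k+1})\le\Psi_{\ti{\cL},\phi}(\btt^{k+1};\btt^{k})$ is \emph{equivalent} to $\mbu^{\top}\nabla^2\ti{\cL}(\xi)\,\mbu\le\phi\|\mbu\|_2^2$. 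Because $\mbu$ is $(2s+2\ti s)$-sparse and $\xi\in\cC(2s+2\ti s,r)$, the LSE bounds give $\kappa_-\|\mbu\|_2^2\le\mbu^{\top}\nabla^2\ti{\cL}(\xi)\,\mbu\le\kappa_+\|\mbu\|_2^2$.

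For the lower bound, the loop returns $\phi$ only after the test passes, so $\phi\|\mbu\|_2^2\ge\mbu^{\top}\nabla^2\ti{\cL}(\xi)\,\mbu\ge\kappa_-\|\mbu\|_2^2$, and dividing by $\|\mbu\|_2^2>0$ yields $\phi\ge\kappa_-(2s+2\ti s,r)$. For the upper bound I would argue the test is \emph{guaranteed} to pass once $\phi\ge\kappa_+$, since then $\mbu^{\top}\nabla^2\ti{\cL}(\xi)\,\mbu\le\kappa_+\|\mbu\|_2^2\le\phi\|\mbu\|_2^2$. As the step starts from $\max\{\phi_0,\gamma_u^{-1}\phi^{k}\}$ and inflates by $\gamma_u$ only on failure, a short induction on $k$ (base case $\phi_0\le\kappa_+$, inductive step $\gamma_u^{-1}\phi^{k}\le\kappa_+$) shows the starting value never exceeds $\kappa_+$, so the last rejected value is below $\kappa_+$ and the returned value obeys $\phi\le\gamma_u\kappa_+(2s+2\ti s,r)$. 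Invoking $\kappa_+\le\rho_+$ from Section \ref{sec:eigen} then delivers the stated upper bound $\phi\le\gamma_u\rho_+(2s+2\ti s,r)$.

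The two obstacles I would watch are: first, the circular dependence in the upper-bound step, where the Taylor point $\xi$ depends on $\btt^{k+1}$, which is itself produced by the candidate $\phi$; this is resolved only by carrying the invariant that every iterate remains $(s+\ti s)$-sparse and inside $B_2(r,\bttc)$ (the standing hypothesis of the lemma) so that the LSE region is never exited during the line search. Second, the line search only certifies $\phi\ge\kappa_-$, and since $\kappa_-\le\rho_-$ the lower-bound direction is the delicate one: the quantity that naturally governs the second-stage step size is the minimal sparse eigenvalue $\kappa_-$ of the shifted loss $\ti{\cL}$, and matching it to the $\rho_-$ written in the statement is the point I would revisit, presenting the lower bound through $\kappa_-$ unless the intended reading is that $\kappa_-$ and $\rho_-$ coincide up to the penalty's maximal concavity controlled by Condition \ref{cond:lse}.
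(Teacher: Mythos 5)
The paper does not actually prove this lemma; it is imported verbatim as Lemma E.7 of \cite{fan2018lamm}, so your self-contained argument is a genuinely different (and more informative) route than the paper's citation. Your mechanism is the right one: the exact second-order Taylor expansion turns the acceptance test $\ti{\cL}(\btt^{k+1})\le\Psi_{\ti{\cL},\phi}(\btt^{k+1};\btt^{k})$ into $\mbu^{\top}\nabla^2\ti{\cL}(\xi)\,\mbu\le\phi\|\mbu\|_2^2$ for a point $\xi$ on the segment, the sparsity and locality of the iterates let you sandwich that Rayleigh quotient by the localized sparse eigenvalues, and the upper bound $\phi\le\gamma_u\kappa_+\le\gamma_u\rho_+$ follows from the ``last rejected value'' argument together with your induction on the warm start $\max\{\phi_0,\gamma_u^{-1}\phi^{k}\}$. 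Two caveats. First, your own diagnosis of the lower bound is correct and should not be papered over: in the second stage the smooth part is the shifted loss $\ti{\cL}$, so the acceptance test only certifies $\phi\ge\kappa_-(2s+2\ti s,r)$, and since $\kappa_-\le\rho_-$ this is strictly weaker than the $\rho_-\le\phi$ displayed in the lemma. The $\rho_\pm$ bounds belong to the setting of \cite{fan2018lamm}, where each subproblem's smooth part is $\cL$ itself; transplanted to TLAMM's second stage the provable statement is $\kappa_-\le\phi\le\gamma_u\kappa_+$, and one should verify that the downstream uses (e.g.\ the step $2/\phi\le 2\rho_-^{-1}$ in the proof of Theorem \ref{thm:tscomplexity}) survive with $\kappa_-^{-1}$ in place of $\rho_-^{-1}$ --- they do, up to constants. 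Second, watch the quantifier in the ``last rejected value'' step: a rejected $\phi'$ produces its own candidate $T_{\ti{\cL},\lambda,\phi'}(\btt^{k})$, which is not the accepted $\btt^{k+1}$ of the hypothesis, so to conclude $\phi'<\kappa_+$ you need every intermediate candidate, not just the accepted one, to be $(s+\ti s)$-sparse and inside the LSE region. You flag this, but the lemma's hypothesis does not literally supply it; it has to be furnished by invoking Lemma \ref{lem:stepsparsity} and Lemma \ref{lem:3.15} for each trial value $\phi'$.
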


\begin{proof}[Proof of Lemma \ref{ls}]
The Lemma is borrowed from Lemma E.7 in \cite{fan2018lamm} and the proof could be found therein.
\end{proof}

\begin{lemma}\label{lem:stopping2}
	If $\btt^{k-1},\btt^{k}\in B_2(r/2,\bttc)$, $\|(\btt^{k})_{S^c}\|_0\leq \ti s$ and $\|(\btt^{k-1})_{S^c}\|_0\leq \ti s$, then for any $k\geq 1$, we have
	\$
	\omega_{\lambda} (\btt^{k})\leq (1+\gamma_u)\rho_+(2s+2\ti s,r)\|\btt^{k}-\btt^{k-1}\|_2.
	\$
\end{lemma}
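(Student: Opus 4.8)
The plan is to convert the stopping functional $\omega_\lambda(\btt^{k})$ into a bound on the consecutive step $\btt^{k}-\btt^{k-1}$ by exploiting the exact first-order optimality condition of the soft-thresholding update, and then to control the resulting Hessian action through the localized sparse eigenvalues. First I would record that $\btt^{k}=T_{\ti\cL,\lambda,\phi}(\btt^{k-1})$ is the exact minimizer of the penalized majorant in \eqref{eq:minimizer}, so its subgradient optimality condition produces a specific $\bfsym{\xi}^{k}\in\partial\norm{\btt^{k}}_1$ with
\[
\nabla\ti\cL(\btt^{k-1})+\phi(\btt^{k}-\btt^{k-1})+\lambda\bfsym{\xi}^{k}=\mathbf{0},\qquad \phi=\phi^{2,k}.
\]
Since $\omega_\lambda(\btt^{k})$ is an infimum over $\partial\norm{\btt^{k}}_1$, evaluating at this particular $\bfsym{\xi}^{k}$ gives the upper bound
\[
\omega_\lambda(\btt^{k})\le \norm{\nabla\ti\cL(\btt^{k})+\lambda\bfsym{\xi}^{k}}_\infty=\norm{\nabla\ti\cL(\btt^{k})-\nabla\ti\cL(\btt^{k-1})-\phi(\btt^{k}-\btt^{k-1})}_\infty.
\]

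Next I would use the integral mean-value representation $\nabla\ti\cL(\btt^{k})-\nabla\ti\cL(\btt^{k-1})=\bH(\btt^{k}-\btt^{k-1})$ with $\bH=\int_0^1\nabla^2\ti\cL\big(\btt^{k-1}+t(\btt^{k}-\btt^{k-1})\big)\,\mathrm{d}t$, exactly as in the proof of Lemma \ref{lem:akkt1}. Writing $\bfsym{\delta}=\btt^{k}-\btt^{k-1}$ and applying the triangle inequality together with $\norm{\bfsym{\delta}}_\infty\le\norm{\bfsym{\delta}}_2$,
\[
\omega_\lambda(\btt^{k})\le \norm{\bH\bfsym{\delta}}_\infty+\phi\norm{\bfsym{\delta}}_2.
\]
The structural input is sparsity of $\bfsym{\delta}$: because $\supp(\btt^{k})\subseteq S\cup A_k$ and $\supp(\btt^{k-1})\subseteq S\cup A_{k-1}$ with $A_k,A_{k-1}\subseteq S^{c}$ of size at most $\ti s$, the support $J$ of $\bfsym{\delta}$ obeys $|J|\le s+2\ti s$.

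To bound $\norm{\bH\bfsym{\delta}}_\infty$ coordinate-wise, I would fix any index $j$ and set $J'=J\cup\{j\}$, which still satisfies $|J'|\le s+2\ti s+1\le 2s+2\ti s$ (using $s\ge1$). Since both endpoints, hence the whole segment, lie in the localized region on which the LSE are defined, the principal submatrix $\bH_{J'J'}$ is symmetric with eigenvalues in $[\kappa_-(2s+2\ti s,r),\kappa_+(2s+2\ti s,r)]$; Condition \ref{cond:lse} forces $\kappa_-(2s+2\ti s,r)\ge\rho_*>0$, so $\bH_{J'J'}$ is positive definite and its spectral norm equals its top eigenvalue, at most $\rho_+(2s+2\ti s,r)$. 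Then $(\bH\bfsym{\delta})_j$ is the $j$-th entry of $\bH_{J'J'}\bfsym{\delta}_{J'}$, whence $|(\bH\bfsym{\delta})_j|\le\norm{\bH_{J'J'}}_2\norm{\bfsym{\delta}}_2\le\rho_+(2s+2\ti s,r)\norm{\bfsym{\delta}}_2$ uniformly in $j$, giving $\norm{\bH\bfsym{\delta}}_\infty\le\rho_+(2s+2\ti s,r)\norm{\bfsym{\delta}}_2$. Finally Lemma \ref{ls} bounds the line-search parameter by $\phi\le\gamma_u\rho_+(2s+2\ti s,r)$, and combining the two pieces yields $\omega_\lambda(\btt^{k})\le(1+\gamma_u)\rho_+(2s+2\ti s,r)\norm{\btt^{k}-\btt^{k-1}}_2$, as claimed.

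The step I expect to be the main obstacle is precisely the passage from $\norm{\bH\bfsym{\delta}}_\infty$ to a sparse-eigenvalue bound, since $\bH\bfsym{\delta}$ is in general a dense $p$-vector and the LSE only control quadratic forms on sparse blocks. The device above resolves this by noting that augmenting the $(s+2\ti s)$-sparse support of $\bfsym{\delta}$ by one coordinate stays within the budget $2s+2\ti s$, so each coordinate of $\bH\bfsym{\delta}$ is a form on an admissible sparse submatrix; the positivity supplied by Condition \ref{cond:lse} is what guarantees the submatrix spectral norm is the top eigenvalue rather than something larger. A secondary bookkeeping point is verifying that the entire segment $\{\btt^{k-1}+t\bfsym{\delta}:t\in[0,1]\}$ remains in the radius-$r$ localized zone where $\rho_+$ and $\kappa_-$ are defined; this follows from $\btt^{k-1},\btt^{k}\in B_2(r/2,\bttc)$ together with the shared $(s+2\ti s)$-sparse support, and I would dispatch it as a short preliminary observation rather than grind through the radius arithmetic.
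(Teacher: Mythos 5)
Your proposal is correct and follows essentially the same route as the paper's proof: both invoke the exact first-order optimality condition of the LAMM update to produce a specific subgradient, split $\nabla\ti{\cL}(\btt^{k})+\lambda\bxi^{k}$ into the gradient difference plus $\phi(\btt^{k}-\btt^{k-1})$, bound the former via the localized sparse eigenvalues and the latter via $\phi\leq\gamma_u\rho_+$ from Lemma \ref{ls}. The only difference is that where the paper simply asserts $\|\nabla\ti{\cL}(\btt^k)-\nabla\ti{\cL}(\btt^{k-1})\|_\infty\leq\kappa_+\|\btt^k-\btt^{k-1}\|_2$ with a one-line remark, you supply the missing justification (mean-value Hessian restricted to the sparse support augmented by one coordinate), which is a welcome tightening rather than a departure.
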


\begin{lemma}\label{lem:diff2}
 We have
	\$
	F(\btt^{k},\lambda)-F(\btt^{k-1},\lambda)\leq -\frac{\phi}{2}\|\btt^{k}-\btt^{k-1}\|_2.
	\$
\end{lemma}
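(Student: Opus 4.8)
The plan is to prove this by the standard majorize--minimization sufficient-decrease argument, working with $F(\bbeta,\lambda)=\ti{\cL}(\bbeta)+\lambda\|\bbeta\|_1$ and the penalized isotropic majorant $\Psi_{\ti{\cL},\lambda,\phi}(\bbeta,\btt^{k-1})=\ti{\cL}(\btt^{k-1})+\langle\nabla\ti{\cL}(\btt^{k-1}),\bbeta-\btt^{k-1}\rangle+\tfrac{\phi}{2}\|\bbeta-\btt^{k-1}\|_2^2+\lambda\|\bbeta\|_1$, where $\phi=\phi^{k}$ is the coefficient accepted by the LAMM line search. First I would record two facts. (i) The acceptance (stopping) rule of the line search in Algorithm~\ref{alg:ls} guarantees the local majorization inequality $\ti{\cL}(\btt^{k})\leq \Psi_{\ti{\cL},\phi}(\btt^{k},\btt^{k-1})$; adding $\lambda\|\btt^{k}\|_1$ to both sides gives $F(\btt^{k},\lambda)\leq \Psi_{\ti{\cL},\lambda,\phi}(\btt^{k},\btt^{k-1})$. (ii) By construction $\btt^{k}=T_{\ti{\cL},\lambda,\phi}(\btt^{k-1})$ is the exact soft-thresholding minimizer of $\bbeta\mapsto\Psi_{\ti{\cL},\lambda,\phi}(\bbeta,\btt^{k-1})$, as given in \eqref{eq:minimizer}--\eqref{eq:st}.

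Next I would exploit strong convexity of the majorant. The map $\bbeta\mapsto\Psi_{\ti{\cL},\lambda,\phi}(\bbeta,\btt^{k-1})$ is $\phi$-strongly convex, being the sum of an affine term, the isotropic quadratic $\tfrac{\phi}{2}\|\bbeta-\btt^{k-1}\|_2^2$, and the convex $\lambda\|\cdot\|_1$. Since $\btt^{k}$ minimizes it, the strong-convexity lower bound applied at the competitor $\bbeta=\btt^{k-1}$ gives
\[
\Psi_{\ti{\cL},\lambda,\phi}(\btt^{k},\btt^{k-1})+\frac{\phi}{2}\|\btt^{k}-\btt^{k-1}\|_2^2\leq \Psi_{\ti{\cL},\lambda,\phi}(\btt^{k-1},\btt^{k-1}).
\]
The right-hand side equals $F(\btt^{k-1},\lambda)$ because the affine and quadratic terms vanish at $\bbeta=\btt^{k-1}$, leaving only $\ti{\cL}(\btt^{k-1})+\lambda\|\btt^{k-1}\|_1$. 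Chaining this with fact~(i) yields $F(\btt^{k},\lambda)-F(\btt^{k-1},\lambda)\leq -\tfrac{\phi}{2}\|\btt^{k}-\btt^{k-1}\|_2^2$, the sufficient-decrease bound that drives the stopping-criterion estimate of Lemma~\ref{lem:stopping2} and the geometric-convergence analysis in Theorem~\ref{thm:tscomplexity}.

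The one genuine subtlety, and the step I would treat most carefully, concerns the norm exponent on the right-hand side. The printed statement displays $\|\btt^{k}-\btt^{k-1}\|_2$ to the first power, whereas the MM argument above inevitably produces the \emph{squared} norm $\tfrac{\phi}{2}\|\btt^{k}-\btt^{k-1}\|_2^2$: the $\phi$-curvature of the isotropic term contributes exactly a quadratic gap, and no first-power term can survive since all linear terms cancel at the minimizer. The unsquared form cannot be recovered from the squared one in the relevant regime, where consecutive iterates become close and $\|\btt^{k}-\btt^{k-1}\|_2<1$ (there $\|\cdot\|_2^2<\|\cdot\|_2$, so $-\tfrac{\phi}{2}\|\cdot\|_2^2\not\leq -\tfrac{\phi}{2}\|\cdot\|_2$). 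I therefore read the displayed inequality as the standard squared sufficient-decrease bound $F(\btt^{k},\lambda)-F(\btt^{k-1},\lambda)\leq -\tfrac{\phi}{2}\|\btt^{k}-\btt^{k-1}\|_2^2$, the omission of the square being a typographical slip; it is precisely the squared form that is consumed by the later lemmas. Beyond flagging this, the remaining verifications are routine: confirming that the accepted $\phi$ satisfies the majorization inequality (fact~(i), from the \textbf{Until} clause of Algorithm~\ref{alg:ls}) and that the soft-thresholding update is the exact minimizer of the penalized majorant (fact~(ii)), both of which are structural and require no estimates on the Hessian.
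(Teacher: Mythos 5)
Your proof is correct and takes essentially the same route as the paper's: both are the standard MM sufficient-decrease argument, and your strong-convexity-at-the-minimizer step is exactly the paper's combination of the first-order optimality condition $\nabla\ti{\cL}(\btt^{k-1})+\phi(\btt^{k}-\btt^{k-1})+\lambda\bxi^{k}=0$ with convexity of the $\ell_1$ term. Your reading of the right-hand side as the squared norm $-\tfrac{\phi}{2}\|\btt^{k}-\btt^{k-1}\|_2^2$ is also what the paper's own proof derives (and what Lemma \ref{lem:3.15} and Theorem \ref{thm:tscomplexity} actually consume), so the unsquared display in the lemma statement is indeed a typographical slip.
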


\begin{proof}[Proof of Lemma \ref{lem:stopping2}]
Since $\btt^{k}$ is the exact solution to the $k$th iteration in the second stage,
    the first order optimality condition holds:  there exists a $\bxi^{k}\in \partial \norm{\bbeta^k}_1$ such that
    \$\label{}
    \nabla\ti{\cL}(\btt^{k-1})+\phi(\btt^k-\btt^{k-1})+\lambda\bxi^k=0.
    \$
    \noindent   Then for any $\mbu$ such that $\|\mbu\|_1=1$, we have
    \begin{equation}
    \begin{split}
    \langle\nabla\ti{\cL}(\btt^k)\!+\!\lambda\bxi^k,\mbu\rangle
    &\!=\!\big\langle\nabla\ti{\cL}(\btt^k),\mbu\rangle\!-\!\langle\nabla\ti{\cL}(\btt^{k-1})\!+\!\phi(\btt^k\!-\!\btt^{k-1}),\mbu\big\rangle \\
    &\!=\!\big\langle\nabla\ti{\cL}(\btt^k)\!-\!\nabla\ti{\cL}(\btt^{k-1}),\mbu\big\rangle\!-\!\big\langle\phi(\btt^k\!-\!\btt^{k-1}),\mbu\rangle\\
    &\!\leq\! \|\nabla\ti{\cL}(\btt^k)\!-\!\nabla\ti{\cL}(\btt^{k-1})\|_\infty\!+\!\phi\|\btt^k\!-\!\btt^{k-1}\|_\infty \\
    &\!\leq\! (\kappa_++\phi)\|\btt^k\!-\!\btt^{k-1}\|_2\leq (1+\gamma_u)\rho_+\|\btt^k\!-\!\btt^{k-1}\|_2.
    \end{split}
    \label{eq:a4}
    \end{equation}
     In \eqref{eq:a4}$, \|\nabla\ti{\cL}(\btt^k)\!-\!\nabla\ti{\cL}(\btt^{k-1})\|_\infty\!$ is upper bounded by $\kappa_+\|\btt^k\!-\!\btt^{k-1}\|_2$ because the estimators in the second stage are within the localized cone near $\bbeta^*$ where LSE holds with high probability.

    The proof is completed by taking $\sup$ over $\|\ub\|_1\leq 1$ in the inequality above.
\end{proof}

\begin{proof}[Proof of Lemma \ref{lem:diff2}]

Recall the stopping criteria of the inflation of the quadratic isotropic parameter $\phi$, we have
    \#\label{0831.1}
    F(\btt^k)-F(\btt^{k-1})\leq \Psi_{\lambda,\phi}(\btt^k,\btt^{k-1})-F(\btt^{k-1}).
    \#
    The convexity of $\|\btt\|_1$ implies
    \begin{align*}
    \lambda\|\btt^{k-1}\|_1&\geq \lambda\|\btt^k\|_1+\big\langle \lambda\bxi^k,\btt^{k-1}-\btt^k\big\rangle.
    \end{align*}
    Therefore we obtain
    \begin{align}\label{e2}
    \!\!\!F(\btt^{k-1})&\!\geq\!\tilde{\cL}(\btt^{k-1})\!+ \lambda\|\btt^k\|_1+\big\langle \lambda\bxi^k,\btt^{k-1}-\btt^k\big\rangle.
    \end{align}
Given that
    \begin{equation}
    \begin{split}
    \Psi_{\lambda,\phi}(\btt^k, \btt^{k-1})=&\cL(\btt^{k-1})\!+\!\big\langle \nabla\cL(\btt^{k-1}),\btt^k\!-\!\btt^{k-1}\big\rangle+\frac{\phi}{2}\|\btt^k\!-\!\btt^{k-1}\|_2^2\!+p_{\lambda}(\bbeta^k) \\
    =&\tilde{\cL}(\btt^{k-1})\!+\!\big\langle \nabla\tilde{\cL}(\btt^{k-1}),\btt^k\!-\!\btt^{k-1}\big\rangle+\frac{\phi}{2}\|\btt^k\!-\!\btt^{k-1}\|_2^2\!+\lambda\norm{\bbeta^k}_1 \\
    &+h(\bbeta^k)-h(\bbeta^{k-1})-\!\big\langle h'(\btt^{k-1}),\btt^k\!-\!\btt^{k-1}\big\rangle \\
    \leq & \tilde{\cL}(\btt^{k-1})\!+\!\big\langle \nabla\tilde{\cL}(\btt^{k-1}),\btt^k\!-\!\btt^{k-1}\big\rangle+\frac{\phi}{2}\|\btt^k\!-\!\btt^{k-1}\|_2^2\!+\lambda\norm{\bbeta^k}_1
    \end{split}
    \label{e3}
    \end{equation}
    because of the concavity of function $h(\cdot)$, by
    plugging (\ref{e2}) and (\ref{e3}) back into (\ref{0831.1}), we obtain
    \begin{equation}\label{0107.7}
    F(\btt^k)\!-\!F(\btt^{k-1})\!\leq\!-\frac{\phi}{2}\|\btt^k\!-\!\btt^{k-1}\|_2^2+\!\langle\nabla\tilde{\cL}(\btt^{k-1})\!+\!\lambda\bxi^k,\btt\!-\!\btt^k\rangle.
    \end{equation}
By the first order optimality condition,  there exists some $\bxi^k$ such that
    \begin{equation}
    \nabla\tilde{\cL}(\btt^{k-1})+\phi (\btt^k-\btt^{k-1})+\lambda\bxi^k=0.
    \label{eq:optimality}
    \end{equation}
Plugging the optimality equation back to \eqref{0107.7},
we complete the proof.
\end{proof}


\begin{lemma}[Geometric Rate in the second stage]\label{lem:fg}
Under the same conditions for Theorem \ref{thm:tscomplexity}, $\{\btt^k\}$ converges geometrically,
\$
&F\big(\btt^k\big)\!-\!F\big(\hbt\big)\\
&\qquad\leq \Bigl(1\!-\!\frac{1}{4\gamma_u\kappa}\Bigr)^k\Big\{F(\btt^{2,0})\!-\!F(\hbt)\Big\}.
\$
for some constant $\kappa>0$ where $\hbt$ is the global minimizer defined in (\ref{eqloss}).
\end{lemma}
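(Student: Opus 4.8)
I would prove a one-step contraction of the form $F(\btt^k)-F(\hbt)\le \bigl(1-\tfrac{1}{4\gamma_u\kappa}\bigr)\bigl(F(\btt^{k-1})-F(\hbt)\bigr)$, with $\kappa:=\rho_+/\kappa_-$, and then iterate it back to $\btt^{2,0}$. Two ingredients drive the argument. The first is restricted strong convexity: by Condition \ref{cond:lse}, on the local sparse region $\cC(2s+2\ti s,r)$ the shifted loss $\ti{\cL}$ is $\kappa_-$-strongly convex, so (because adding $\lambda\|\cdot\|_1$ preserves convexity) $F$ obeys the quadratic-growth bound $\tfrac{\kappa_-}{2}\|\bbeta-\hbt\|_2^2\le F(\bbeta)-F(\hbt)$ for $\bbeta$ in this region, where $\hbt$ is the restricted minimizer. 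The second is a three-point inequality for the LAMM proximal step, which refines the sufficient-decrease bound of Lemma \ref{lem:diff2}.

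\textbf{Three-point inequality.} Since $\btt^k$ minimizes the penalized majorant $\Psi_{\lambda,\phi}(\cdot,\btt^{k-1})$, which is $\phi$-strongly convex, optimality gives for every $\bbeta$
\[
\Psi_{\lambda,\phi}(\btt^k,\btt^{k-1})+\tfrac{\phi}{2}\|\bbeta-\btt^k\|_2^2\le \Psi_{\lambda,\phi}(\bbeta,\btt^{k-1}).
\]
The line-search stopping rule gives $F(\btt^k)\le\Psi_{\lambda,\phi}(\btt^k,\btt^{k-1})$, while $\kappa_-$-strong convexity of $\ti{\cL}$ yields the pointwise upper bound $\Psi_{\lambda,\phi}(\bbeta,\btt^{k-1})\le F(\bbeta)+\tfrac{\phi-\kappa_-}{2}\|\bbeta-\btt^{k-1}\|_2^2$. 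Chaining these three facts produces, for all $\bbeta$ in the region,
\[
F(\btt^k)\le F(\bbeta)+\tfrac{\phi-\kappa_-}{2}\|\bbeta-\btt^{k-1}\|_2^2-\tfrac{\phi}{2}\|\bbeta-\btt^k\|_2^2.
\]

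\textbf{Interpolation and iteration.} I would apply the last display at the segment point $\bbeta_\tau=(1-\tau)\btt^{k-1}+\tau\hbt$, discard the nonnegative final term, and invoke convexity $F(\bbeta_\tau)\le(1-\tau)F(\btt^{k-1})+\tau F(\hbt)$ to obtain
\[
F(\btt^k)-F(\hbt)\le(1-\tau)\bigl(F(\btt^{k-1})-F(\hbt)\bigr)+\tfrac{\phi-\kappa_-}{2}\,\tau^2\|\btt^{k-1}-\hbt\|_2^2.
\]
Substituting the quadratic-growth bound $\|\btt^{k-1}-\hbt\|_2^2\le \tfrac{2}{\kappa_-}\bigl(F(\btt^{k-1})-F(\hbt)\bigr)$, bounding $\phi-\kappa_-\le\phi\le\gamma_u\rho_+$ via Lemma \ref{ls}, and choosing $\tau=\kappa_-/(2\gamma_u\rho_+)\in(0,\tfrac12)$ collapses the bracketed factor to $1-\kappa_-/(4\gamma_u\rho_+)=1-1/(4\gamma_u\kappa)$. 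Iterating this contraction from step $k$ down to $\btt^{2,0}$ gives the claim.

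\textbf{Main obstacle.} The delicate part is not the convex-analytic bookkeeping but the restriction to the local sparse region: every use of strong convexity and quadratic growth requires $\btt^{k-1}$, $\btt^k$, $\hbt$, and the interpolant $\bbeta_\tau$ to lie in $\cC(2s+2\ti s,r)$, where $\ti{\cL}$ is $\kappa_-$-strongly convex. I would secure this by an induction that the iterates stay in the region, using the warm start $\btt^{2,0}=\ti\bbeta^1\in\cC(s+\ti s,r)$ from Propositions \ref{prop:contraction}--\ref{prop:stage1sparsity}, the $\ell_1$-cone property of Lemma \ref{lem:akkt1} (so that the differences entering the eigenvalue bounds are effectively $(2s+2\ti s)$-sparse), and the monotone decrease from Lemma \ref{lem:diff2} to control the radius. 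Convexity of $\cC$ then places $\bbeta_\tau$ in the region automatically. Verifying that the working sparsity level $2s+2\ti s$ in Condition \ref{cond:lse} is large enough to simultaneously cover all these differences is the crux of the argument.
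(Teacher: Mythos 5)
Your proposal is correct and follows essentially the same route as the paper's proof: majorize $F(\btt^k)$ by the LAMM surrogate, restrict to the segment $\alpha\hbt+(1-\alpha)\btt^{k-1}$, use convexity of $F$ together with the quadratic-growth bound $\tfrac{\kappa_-}{2}\|\btt^{k-1}-\hbt\|_2^2\le F(\btt^{k-1})-F(\hbt)$ from restricted strong convexity, optimize the interpolation parameter (your $\tau=\kappa_-/(2\gamma_u\rho_+)$ is the paper's $\alpha$), and control locality and sparsity of the iterates via Lemma \ref{lem:3.15}. Your three-point inequality and the $\phi-\kappa_-$ refinement are only cosmetic sharpenings of the paper's chain $F(\btt^k)\le\Psi(\btt^k,\btt^{k-1})\le\min_{\btt(\alpha)}\{F(\btt)+\tfrac{\phi}{2}\|\btt-\btt^{k-1}\|_2^2\}$ and do not change the argument.
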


\begin{proof}[Proof of Lemma \ref{lem:fg}]

Define $\btt(\alpha)=\alpha \hbt +(1-\alpha)\btt^{k-1}$. Since $F(\btt^k)$ is majorized at  $\Psi(\btt^k, \btt^{k-1})$, we have
\$
\!F(\btt^k)&\!\leq\! \Psi(\btt^k, \btt^{k-1})\\
&\leq \min_{\btt(\alpha)} \Big\{  {\tilde{\cL}(\btt^{k-1})\!+\!\langle\nabla\tilde{\cL}(\bbeta^{k-1}),\btt\!-\!\btt^{k-1}\rangle} \!+\!\frac{\phi}{2}\|\btt\!-\!\btt^{k-1}\|_2^2\!+\lambda\|\btt\|_1\Big\}\notag\\
&\!\leq\! \min_{\btt(\alpha)}\Big\{F(\bbeta)+\frac{\phi}{2}\|\btt-\btt^{k-1}\|_2^2 \Big\},
\$
where we restrict $\btt$  on the line segment $\alpha \hbt+(1-\alpha)\btt^{k-1}$ in the first inequality and    the last inequality follows from the convexity of $\tilde{\cL}(\btt)$.
Using the convexity of $F(\btt)$, we obtain that
\begin{equation}
\begin{split}
F(\btt^k) &\!\leq\! \min_{\btt(\alpha)}\Big\{F(\btt)+\frac{\phi}{2}\|\btt-\btt^{k-1}\|_2^2\Big\}\\
    &\!\leq\! \min_{\alpha}\Big\{ \alpha F(\hbt)+(1-\alpha) F(\btt^{k-1})+\frac{\alpha^2\phi}{2}\|\btt^{k-1}-\hbt\|_2^2\Big\}\\
    &\!\leq\! \min_{\alpha}\Big\{F(\btt^{k-1})\!-\!\alpha \big[F(\btt^{k-1})\!-\!F(\hbt)\big]\!+\!\frac{\alpha^2\phi}{2}{\|\btt^{k-1}\!-\!\hbt\|_2^2}\Big\}.
\end{split}
\label{eq:a10}
\end{equation}
\noindent   Next,  we  bound the last term in the inequality above. Applying Lemma \ref{lem:3.15}, we obtain
\begin{align*}
    &\|(\btt^{k-1})_{S^c}\|_0\leq \ti s,~ \|\btt^{k-1}-\bttc\|_1\leq C'\lambda s\leq r,~ \|\hbt-\bttc\|_2\leq r, ~\mbox{and}~\|\hbt_{S^c}\|_0\leq \ti s.
\end{align*}

Recall $\hat\bxi$ is some subgradient of $\|\widehat\bbeta\|_1$. Using the convexity of $\tilde{\cL}(\cdot)$ and the $\ell_1$-norm,
 $F(\btt^{k-1})\!-\!F(\hbt)$ can be bounded in the following way
 \begin{equation}
 \begin{split}
F(\btt^{k-1})\!-\!F(\hbt)&\!\geq\! \big\langle\nabla\tilde{\cL}(\hbt)\!+\!\lambda\hat\bxi,\btt^{k-1}\!-\!\hbt\big\rangle \!+\left(\ti{\cL}(\bbeta^{k-1})-\ti{\cL}(\hbt)-\langle\nabla\ti{\cL}(\hbt),\bbeta^{k-1}-\hbt\rangle\right) \\
&\!\geq\! \frac{\kappa_-}{2}\|\btt^{k-1}\!-\!\hbt\|_2^2,
\end{split}
\label{eq:a11}
\end{equation}
where the last inequality is due to the first order optimality condition and the LSE condition.

Plugging \eqref{eq:a11} back to \eqref{eq:a10}, we conclude that
\$
F(\btt^k)&\!\leq\! \min\limits_{\alpha}\Big\{F(\btt^{k-1})\!-\!\alpha\big[F(\btt^{k-1})\!-\!F(\hbt)\big]\\
&\qquad\qquad\!+\!\frac{\alpha^2\phi}{\kappa_-}\big[F(\btt^{k-1})\!-\!F(\hbt)\big]\Big\}\\
&\leq F(\btt^{k-1})-\frac{\kappa_-}{4\phi}\big[F(\btt^{k-1})-F(\hbt)\big].
\$
which, combining with the fact $\phi\leq \gamma_u\rho_+$, yields
\$
F\big(\btt^k\big)-F\big(\hbt\big)&\leq \Big(1-\frac{1}{4\gamma_u\kappa}\Big)^k\Big\{F(\tilde\btt^{(0)})-F(\hbt)\Big\},
\$
in which $\kappa={\rho_+}/{\kappa_-}$.

\end{proof}

The next lemma is related to the parameter estimation and objective function bound for sparse approximate solutions.

\begin{lemma}\label{0908.1}
Let $\lambda\!\geq\! 2\left(\|\nabla\cL(\btt^*)\|_\infty\!+\!\varepsilon\right)$.
	If $\|(\btt-\bttc)_{S^c}\|_0\leq \ti s$, $\omega_{\lambda}(\btt)\leq \varepsilon$ and $\btt\in B_1(r,\bttc)$, then we must have
	\begin{gather*}
	\|\btt-\bttc\|_2\leq 3\kappa_-^{-1}\lambda\sqrt{s}/2,\\
	F(\btt)-F(\bttc)\leq 3\varepsilon\kappa_-^{-1}\lambda s.
	\end{gather*}
\end{lemma}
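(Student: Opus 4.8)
The plan is to combine the approximate stationarity encoded by $\omega_\lambda(\btt)\le\varepsilon$ with the restricted strong convexity supplied by the LSE condition, and then to recycle the $\ell_1$-cone inequality already established in Lemma \ref{lem:akkt1}. Write $\bDelta=\btt-\bttc$. The stopping criterion furnishes a subgradient $\bxi\in\partial\|\btt\|_1$ such that $\mbu:=\nabla\ti\cL(\btt)+\lambda\bxi$ obeys $\|\mbu\|_\infty\le\varepsilon$. The hypothesis $\|(\btt-\bttc)_{S^c}\|_0\le\ti s$ together with $\|\bttc\|_0=s$ shows $\|\bDelta\|_0\le s+\ti s\le 2s+2\ti s$, and since both $\btt$ and $\bttc$ lie in $B_1(r,\bttc)$ so does the entire segment joining them. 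Hence a mean-value expansion $\nabla\ti\cL(\btt)-\nabla\ti\cL(\bttc)=\bH\bDelta$, with $\bH=\nabla^2\ti\cL(\cdot)$ evaluated on that segment, combined with Condition \ref{cond:lse}, yields the restricted strong convexity bound
\[
\kappa_-\|\bDelta\|_2^2\le \langle\nabla\ti\cL(\btt)-\nabla\ti\cL(\bttc),\bDelta\rangle .
\]

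First I would bound the right-hand side from above. Splitting $\nabla\ti\cL(\btt)=\mbu-\lambda\bxi$ and, as in Lemma \ref{lem:akkt1}, using the strong-signal condition $\|\bttc_S\|_{\min}\ge a_1\lambda$ (so that $p'_\lambda$ vanishes on $S$ and $\nabla\ti\cL(\bttc)=\nabla\cL(\bttc)-\lambda\bxi^*$ for some $\bxi^*\in\partial\|\bttc\|_1$), the two inner products involving subgradients are controlled by convexity of the $\ell_1$ norm: $\langle\bxi,\btt-\bttc\rangle\ge\|\btt\|_1-\|\bttc\|_1$ and $\langle\bxi^*,\btt-\bttc\rangle\le\|\btt\|_1-\|\bttc\|_1$. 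The decisive point is that the two $\lambda(\|\btt\|_1-\|\bttc\|_1)$ contributions cancel, leaving
\[
\langle\nabla\ti\cL(\btt)-\nabla\ti\cL(\bttc),\bDelta\rangle\le\big(\|\nabla\cL(\bttc)\|_\infty+\varepsilon\big)\|\bDelta\|_1\le\tfrac{\lambda}{2}\|\bDelta\|_1,
\]
where the last step invokes $\lambda\ge 2(\|\nabla\cL(\bttc)\|_\infty+\varepsilon)$.

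Next I would convert the $\ell_1$ norm into an $\ell_2$ norm. Since $\lambda\ge 2(\|\nabla\cL(\bttc)\|_\infty+\varepsilon)$ forces the cone ratio of Lemma \ref{lem:akkt1} to be at most $1$, we obtain $\|\bDelta_{S^c}\|_1\le\|\bDelta_S\|_1$, hence $\|\bDelta\|_1\le 2\|\bDelta_S\|_1\le 2\sqrt{s}\,\|\bDelta\|_2$. Feeding this into the two displays gives $\kappa_-\|\bDelta\|_2^2\le\lambda\sqrt{s}\,\|\bDelta\|_2$, i.e. $\|\bDelta\|_2\le\kappa_-^{-1}\lambda\sqrt{s}$, which is in fact sharper than the claimed $\tfrac32\kappa_-^{-1}\lambda\sqrt{s}$. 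For the objective gap I would use that $F=\ti\cL+\lambda\|\cdot\|_1$ is convex along the segment $[\btt,\bttc]$ (the LSE lower bound makes $t\mapsto F(\btt+t(\bttc-\btt))$ convex) together with $\mbu\in\partial F(\btt)$: the subgradient inequality $F(\bttc)\ge F(\btt)+\langle\mbu,\bttc-\btt\rangle$ yields $F(\btt)-F(\bttc)\le\langle\mbu,\bDelta\rangle\le\varepsilon\|\bDelta\|_1\le 2\varepsilon\sqrt{s}\,\|\bDelta\|_2\le 2\varepsilon\kappa_-^{-1}\lambda s$, again within the stated $3\varepsilon\kappa_-^{-1}\lambda s$.

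The main obstacle is the second step, namely bounding $\langle\nabla\ti\cL(\btt)-\nabla\ti\cL(\bttc),\bDelta\rangle$ cleanly. One must handle $\nabla\ti\cL(\bttc)$, which is possible only because the strong-signal condition annihilates $p'_\lambda$ on the support and turns $\nabla\ti\cL(\bttc)$ into $\nabla\cL(\bttc)-\lambda\bxi^*$; and one must verify the \emph{cancellation} of the two subgradient terms rather than bound them separately, since a crude bound forfeits the cancellation and inflates the constant. The remainder is bookkeeping: checking that $s+\ti s\le 2s+2\ti s$ so that the LSE constant $\kappa_-=\kappa_-(2s+2\ti s,r)$ is the relevant one, and that the segment stays in $B_1(r,\bttc)$ so the Hessian lower bound applies.
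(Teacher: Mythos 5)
Your proof is correct, and it rests on the same three pillars as the paper's: restricted strong convexity from the LSE condition, the approximate first-order optimality $\omega_\lambda(\btt)\le\varepsilon$, and the $\ell_1$-cone property of Lemma \ref{lem:akkt1}. Where you differ is in the route to the $\ell_2$ bound. The paper simply invokes "the same argument as in Proposition \ref{prop:decomposition2}," i.e.\ the three-way case split over $\cE_2$, $\cE_1\cap S$ and $S^c$, which retains a shrinkage-bias term $\lambda\sqrt{\abs{\cE_1\cap S}}$ and then bounds it crudely by $\lambda\sqrt{s}$ to arrive at $\tfrac{3}{2}\kappa_-^{-1}\lambda\sqrt{s}$. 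You instead exploit the exact cancellation of the two subgradient terms $-\lambda\langle\bxi,\btt-\bttc\rangle+\lambda\langle\bxi^*,\btt-\bttc\rangle\le 0$, which collapses the case analysis and yields the sharper constant $\kappa_-^{-1}\lambda\sqrt{s}$ (and correspondingly $2\varepsilon\kappa_-^{-1}\lambda s$ for the objective gap). The trade-off is that your cancellation needs $p'_\lambda$ to vanish on $S$, i.e.\ the strong-signal condition $\norm{\bbeta^*_S}_{\min}\ge a_1\lambda$, whereas the Proposition \ref{prop:decomposition2} machinery delivers the $\ell_2$ bound without it; that said, both your proof and the paper's ultimately lean on Lemma \ref{lem:akkt1} for the cone inequality, and that lemma is itself stated under the strong-signal hypothesis, so neither proof is strictly free of it even though the lemma's statement omits it. The treatment of $F(\btt)-F(\bttc)$ — subgradient inequality for the convex $F$ followed by the cone-based conversion $\norm{\btt-\bttc}_1\le 2\sqrt{s}\norm{\btt-\bttc}_2$ — is identical in both arguments.
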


\begin{proof}[Proof of Lemma \ref{0908.1}]

Following the same argument in the proof of Proposition \ref{prop:decomposition2}, we have
	\#\label{0908.1.8}
   \|\btt-\bttc\|_2&
   \leq \frac{3\lambda\sqrt{s}}{2\kappa_-}.
	\#

	Next, we prove the desired  bound for $F(\btt)-F(\bttc)$. Using the convexity of $F(\cdot)$, we obtain
	\begin{align*}
	F(\bttc)\geq F(\btt)+\big\langle\nabla\cL(\btt)+p'_{\lambda}(\bbeta),\bttc-\btt\big\rangle,
	\end{align*}
which yields that
\#\label{0908.1.9}
F(\btt)-F(\bttc)\leq -\big\langle\nabla\cL(\btt)+p'_{\lambda}(\bbeta),\bttc-\btt\big\rangle\leq \varepsilon \|\bttc-\btt\|_1.
\#
	On the other hand, we know from  Lemma \ref{lem:akkt1} that the approximate solution $\btt$ falls in the $\ell_1$ cone:
	\$
	\|(\btt-\bttc)_{S^c}\|_1&\leq \|(\btt-\bttc)_{S}\|_1,
	\$
which, together with \eqref{0908.1.8}, implies
	\#\label{0908.1.10}
	\|\btt-\bttc\|_1&\leq 2\|(\btt-\bttc)_{S}\|_1 \leq 2\sqrt{s}\|(\btt-\bttc)_{S}\|_2 \leq 3\kappa_-^{-1} \lambda s.
	\#
	
	\noindent	Plugging  \eqref{0908.1.10} into \eqref{0908.1.9} completes the proof.
\end{proof}

Lemma \ref{lem:basicineq}, Lemma \ref{lem:0822.1}, Lemma \ref{lem:stepsparsity}  and Lemma \ref{lem:3.15} are general results for mid-stage estimators.
\begin{lemma}[Basic Inequality]\label{lem:basicineq}
Let $\lambda\!\geq\! 2\left(\|\nabla\cL(\btt^*)\|_\infty\!+\!\varepsilon\right)$. If $\|\btt_{S^c}\|_0\leq \ti s$, $\btt\in B_2(r,\bttc)$ and $F(\btt)-F(\bttc)\leq C\lambda^2s$, then
	\$
\frac{\kappa_-}{2}\|\btt-\bttc\|_2^2+\frac{\lambda}{2}\|(\btt-\bttc)_{S^c}\|_1\leq \frac{5\lambda}{2}\|(\btt-\bttc)_{S}\|_1+C\lambda^2s.
	\$
\end{lemma}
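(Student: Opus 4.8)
The plan is to lower-bound the objective gap $F(\btt)-F(\bttc)$ by a quantity involving $\|\btt-\bttc\|_2^2$ together with the $\ell_1$ norms of $\btt-\bttc$ on $S$ and $S^c$, and then combine this with the hypothesis $F(\btt)-F(\bttc)\le C\lambda^2 s$ to read off the claim. Write $F(\btt)=\ti{\cL}(\btt)+\lambda\|\btt\|_1$ and recall the smooth concave shift $h(\btt)=p_{\lambda}(\btt)-\lambda\|\btt\|_1$, so that $\ti{\cL}=\cL+h$. Since $\bttc$ is supported on $S$ and $\|\btt_{S^c}\|_0\le\ti s$, the difference $\btt-\bttc$ is supported on a set of cardinality at most $s+\ti s\le 2s+2\ti s$; hence a second-order expansion of $\ti{\cL}$ along the segment joining $\bttc$ and $\btt$, combined with the LSE condition (Condition \ref{cond:lse}), gives the strong-convexity bound
\[
\ti{\cL}(\btt)-\ti{\cL}(\bttc)\ge \big\langle\nabla\ti{\cL}(\bttc),\btt-\bttc\big\rangle+\frac{\kappa_-}{2}\|\btt-\bttc\|_2^2,
\]
valid because $\btt\in B_2(r,\bttc)$ keeps the whole segment inside the localized zone where $\kappa_-$ lower-bounds the sparse curvature.

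Next I would control the linear term by splitting $\nabla\ti{\cL}(\bttc)=\nabla\cL(\bttc)+\nabla h(\bttc)$. For the data part, the assumption $\lambda\ge 2(\|\nabla\cL(\bttc)\|_\infty+\varepsilon)$ yields $\langle\nabla\cL(\bttc),\btt-\bttc\rangle\ge-\tfrac{\lambda}{2}\|\btt-\bttc\|_1$. For the penalty part, Condition \ref{cond:concave} gives that $h$ is differentiable with $h'(0)=\lim_{x\to0+}p'_\lambda(x)-\lambda=0$, so $\nabla h(\bttc)$ vanishes on $S^c$; and on $S$ the bound $0\le p'_\lambda\le\lambda$ yields $\|\nabla h(\bttc)_S\|_\infty\le\lambda$, whence $\langle\nabla h(\bttc),\btt-\bttc\rangle\ge-\lambda\|(\btt-\bttc)_S\|_1$. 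It is worth noting that this step requires only Condition \ref{cond:concave}, not any minimal-signal condition. I would then handle the penalty difference by the reverse triangle inequality and the support of $\bttc$, obtaining $\|\btt\|_1-\|\bttc\|_1\ge \|(\btt-\bttc)_{S^c}\|_1-\|(\btt-\bttc)_S\|_1$.

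Collecting the three contributions and writing $\|\btt-\bttc\|_1=\|(\btt-\bttc)_S\|_1+\|(\btt-\bttc)_{S^c}\|_1$, the coefficient of $\|(\btt-\bttc)_{S^c}\|_1$ becomes $-\tfrac{\lambda}{2}+\lambda=\tfrac{\lambda}{2}$ and that of $\|(\btt-\bttc)_S\|_1$ becomes $-\tfrac{\lambda}{2}-\lambda-\lambda=-\tfrac{5\lambda}{2}$, which gives
\[
F(\btt)-F(\bttc)\ge \frac{\kappa_-}{2}\|\btt-\bttc\|_2^2+\frac{\lambda}{2}\|(\btt-\bttc)_{S^c}\|_1-\frac{5\lambda}{2}\|(\btt-\bttc)_S\|_1.
\]
Substituting the hypothesis $F(\btt)-F(\bttc)\le C\lambda^2 s$ and rearranging then produces the stated inequality. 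The main obstacle is the bookkeeping of the penalty gradient $\nabla h(\bttc)$: one must verify that it is supported on $S$ (the cancellation of the kink of $p_\lambda$ at the origin, giving $h'(0)=0$) and that its sup-norm there is at most $\lambda$; once these two facts are in place, the constants $\tfrac12$ and $\tfrac52$ emerge exactly. A secondary technical point is confirming that the entire segment between $\bttc$ and $\btt$ lies within the region where the LSE curvature lower bound $\kappa_-$ is in force, which follows from $\btt\in B_2(r,\bttc)$ and the $(s+\ti s)$-sparsity of $\btt-\bttc$.
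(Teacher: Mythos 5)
Your proof is correct and follows essentially the same route as the paper's: localized sparse strong convexity of $\ti{\cL}$ along the segment, a split of the linear term into the data gradient (controlled by $\|\nabla\cL(\bttc)\|_\infty\le\lambda/2$) and the penalty-shift gradient, and the reverse triangle inequality for $\lambda(\|\btt\|_1-\|\bttc\|_1)$, yielding the same coefficients $\lambda/2$ and $5\lambda/2$. The only (minor, and in fact slightly cleaner) difference is that you bound $\|\nabla h(\bttc)_S\|_\infty\le\lambda$ directly from $0\le p'_\lambda\le\lambda$, whereas the paper's term $\Rom{2}=\langle\lambda\bxi^*,\btt-\bttc\rangle$ implicitly takes $p'_\lambda$ to vanish at $\bttc$; both give the identical bound $\lambda\|(\btt-\bttc)_S\|_1$.
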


\begin{proof}[Proof of Lemma \ref{lem:basicineq}]
Since $\|\btt_{S^c}\|_0\leq \ti s$ and $\|\bttc_{S^c}\|_0=0$, we have $\|(\btt-\bttc)_{S^c}\|_0\leq \ti s$. The localized sparse strong convexity implies that
	\#\label{0822.1.1}
	\tilde{\cL}(\bttc)+\big\langle\nabla\tilde{\cL}(\bttc),\btt-\bttc\big\rangle+\frac{\kappa_-}{2}\|\btt-\bttc\|_2^2\leq\tilde{\cL}(\btt).
	\#
	Recall that $F(\btt)=\tilde{\cL}(\btt)+\lambda\|\btt\|_1$,
	$
	F(\btt)-F(\bttc)\leq C\lambda^2s,
	$
is equivalent to
	\#\label{0822.1.2}
	\tilde{\cL}(\btt)-\tilde{\cL}(\bttc)+\lambda(\|\btt\|_1-\|\bttc\|_1)\leq C\lambda^2s.
	\#
Plugging   \eqref{0822.1.1} into the left-hand side of \eqref{0822.1.2}, we   immediately obtain
	\begin{align*}
	\frac{\kappa_-}{2}\|\btt\!-\!\bttc\|_2^2 &\leq\! C\lambda^2s -\big\langle\nabla\tilde{\cL}(\bttc),\btt\!-\!\bttc\big\rangle\!+\!\lambda(\|\bttc\|_1\!-\!\|\btt\|_1)\\
	&=\! C\lambda^2s\underbrace{-\big\langle\nabla\cL(\bttc),\btt\!-\!\bttc\big\rangle}_{\Rom{1}}\!+\underbrace{\langle\lambda\bxi^*,\bbeta-\bbeta^*\rangle}_{\Rom{2}}+\!\underbrace{\lambda\left(\|\bttc\|_1\!-\!\|\btt\|_1\right)}_{\Rom{3}}, \\
	\end{align*}
where $\bxi^*\in\partial\norm{\bbeta^*}_1$.
Following a similar argument in the proof of Proposition \ref{prop:decomposition2}, we have
\begin{gather*}
\Rom{1}\leq \|(\btt-\bttc)_{S^c}\|_1\|\nabla\cL(\bttc)\|_\infty+\|(\btt-\bttc)_{S}\|_1\|\nabla\cL(\bttc)\|_\infty,\\
\Rom{2} \leq \lambda\norm{(\bbeta-\bbeta^*)_{S}}_1 \\
\Rom{3}\leq \lambda\|(\btt-\bttc)_{S}\|_1-\lambda\|(\btt-\bttc)_{S^c}\|_1.
\end{gather*}

Therefore, we have
	\$
	&\frac{\kappa_-}{2}\|\btt-\bttc\|_2^2+(\lambda-\|\nabla\cL(\bttc)\|_\infty)\|(\btt-\bttc)_{S^c}\|_1\\
	&\qquad{}\leq (2\lambda+\|\nabla\cL(\bttc)\|_\infty)\|(\btt-\bttc)_{S}\|_1+C\lambda^2s.
	\$
The proof is finished by noticing   that $\|\nabla\cL(\bbeta^*)\|_\infty\leq \lambda/2$.
\end{proof}

\begin{lemma}\label{lem:0822.1}  Let  $\|\nabla\cL(\bttc)\|_\infty+\varepsilon\leq \lambda/2$. 	If  $\btt\in B_1(r,\bttc)$ satisfies
	$
	\|\btt_{S^c}\|_0\leq \ti s$ and $F(\btt)-F(\bttc)\leq C\lambda^2 s,
	$
	then we must have
	\begin{gather}
	\|\btt-\bttc\|_2\leq C'\lambda\sqrt{s}, \notag\\
	\big\langle\nabla\ti{\cL}(\btt)-\nabla\ti{\cL}(\bttc),\btt-\bttc\big\rangle\leq C'^2\kappa_+(s+\ti s,r)\lambda^2 s, \notag
	\end{gather}
	where $C'=\max\{2\sqrt{C/\kappa_-}, 10/ \kappa_-\}$.
\end{lemma}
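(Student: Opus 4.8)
The plan is to obtain the $\ell_2$ bound by invoking the Basic Inequality (Lemma~\ref{lem:basicineq}) and reducing to a scalar quadratic inequality, and then to deduce the inner-product bound via a mean-value argument together with the localized sparse eigenvalue condition.

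First I would check that the hypotheses of Lemma~\ref{lem:basicineq} are in force. Since $\|\cdot\|_2\leq\|\cdot\|_1$ we have $B_1(r,\bttc)\subseteq B_2(r,\bttc)$, so $\btt\in B_2(r,\bttc)$; moreover $\|\btt_{S^c}\|_0\leq\ti s$, $F(\btt)-F(\bttc)\leq C\lambda^2 s$, and $\lambda\geq 2(\|\nabla\cL(\bttc)\|_\infty+\varepsilon)$ all hold. Dropping the nonnegative term $\frac{\lambda}{2}\|(\btt-\bttc)_{S^c}\|_1$ from the left-hand side of the Basic Inequality and bounding $\|(\btt-\bttc)_{S}\|_1\leq\sqrt{s}\,\|\btt-\bttc\|_2$ by Cauchy--Schwarz (using $|S|=s$), I arrive at
\[
\frac{\kappa_-}{2}\|\btt-\bttc\|_2^2\leq\frac{5\lambda\sqrt{s}}{2}\|\btt-\bttc\|_2+C\lambda^2 s.
\]
Writing $y=\|\btt-\bttc\|_2/(\lambda\sqrt{s})$, this becomes $\frac{\kappa_-}{2}y^2\leq\frac{5}{2}y+C$. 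A short argument by contradiction then yields $y\leq C'=\max\{2\sqrt{C/\kappa_-},\,10/\kappa_-\}$: if $y>C'$, then $y>10/\kappa_-$ forces $\frac{5}{2}y<\frac{\kappa_-}{4}y^2$, while $y>2\sqrt{C/\kappa_-}$ forces $C<\frac{\kappa_-}{4}y^2$, and adding these two strict inequalities contradicts $\frac{\kappa_-}{2}y^2\leq\frac{5}{2}y+C$. This establishes the first claim $\|\btt-\bttc\|_2\leq C'\lambda\sqrt{s}$.

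For the second bound I would apply the mean value theorem to the gradient of $\ti\cL$: there exists $\gamma\in[0,1]$ such that
\[
\big\langle\nabla\ti{\cL}(\btt)-\nabla\ti{\cL}(\bttc),\btt-\bttc\big\rangle=(\btt-\bttc)^\top\nabla^2\ti{\cL}\big(\gamma\btt+(1-\gamma)\bttc\big)(\btt-\bttc).
\]
I then verify the two requirements for invoking $\kappa_+(s+\ti s,r)$. For sparsity, $\bttc$ is supported on $S$ with $|S|=s$ and $\btt_{S^c}$ has at most $\ti s$ nonzero entries, so $\btt-\bttc$ has at most $s+\ti s$ nonzero entries. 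For locality, the midpoint satisfies $\|\gamma\btt+(1-\gamma)\bttc-\bttc\|_1=\gamma\|\btt-\bttc\|_1\leq r$ since $\btt\in B_1(r,\bttc)$. By the definition of the maximum localized sparse eigenvalue of $\ti\cL$, the quadratic form is therefore at most $\kappa_+(s+\ti s,r)\|\btt-\bttc\|_2^2$; substituting the first bound $\|\btt-\bttc\|_2\leq C'\lambda\sqrt{s}$ gives the stated inequality.

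The approach is largely mechanical once the Basic Inequality is in hand; the two points requiring care are making the constant $C'$ come out exactly in the quadratic-inequality step and carefully confirming the sparsity and $\ell_1$-locality of the midpoint so that the $\kappa_+(s+\ti s,r)$ eigenvalue bound legitimately applies. Neither presents a genuine obstacle, so I expect the estimate on $C'$ to be the most error-prone bookkeeping step.
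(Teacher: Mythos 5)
Your proposal is correct and follows essentially the same route as the paper: invoke the Basic Inequality (Lemma~\ref{lem:basicineq}), control the resulting quadratic-type inequality in $\|\btt-\bttc\|_2$ to extract the constant $C'=\max\{2\sqrt{C/\kappa_-},10/\kappa_-\}$, and then bound the inner product by the localized sparse eigenvalue $\kappa_+(s+\ti s,r)$ applied to the mean-value Hessian. The only cosmetic difference is that the paper splits into two cases according to which term of $\frac{5\lambda}{2}\|(\btt-\bttc)_S\|_1+C\lambda^2 s$ dominates, whereas you solve the single scalar quadratic inequality by contradiction; both yield the identical constant.
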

\begin{proof}[Proof of Lemma \ref{lem:0822.1}]
	Directly applying Lemma \ref{lem:basicineq}, it follows that
	\$
	\frac{\kappa_-}{2}\|\btt-\bttc\|_2^2&\leq \frac{5\lambda}{2}\|(\btt-\bttc)_{S}\|_1+C\lambda^2s.
	\$

To further bound the right-hand side of the inequality above, we discuss two cases regarding the magnitude of $\|(\btt-\bttc)_{S}\|_1$ with respect to $\lambda s$:
\begin{itemize}
	\item If $5\lambda\|(\btt-\bttc)_{S}\|_1/2\leq C\lambda^2s$, we have
\#\label{0822.1.8}
\frac{\kappa_-}{2}\|\btt-\bttc\|_2^2\leq 2C\lambda^2s, ~\textnormal{and thus}~\|\btt-\bttc\|_2\leq 2\sqrt{\frac{C}{\kappa_-}}\lambda\sqrt{s}.
\#
\item If $5\lambda\|(\btt-\bttc)_{S}\|_1/2> C\lambda^2s$, we have
\$
\frac{\kappa_-}{2}\|\btt-\bttc\|_2^2\leq 5\lambda\|(\btt-\bttc)_S\|_1\leq 5\lambda\sqrt{s}\|\btt-\btt^*\|_2,
\$
which further yields
\#\label{0822.1.9}
\|\bbeta-\bbeta^*\|_2\leq \frac{10}{\kappa_-}\lambda\sqrt{s}.
\#
	\end{itemize}
Combining (\ref{0822.1.8}) and (\ref{0822.1.9}), we obtain	
\$
\|\btt-\bttc\|^2_2&\leq \max\bigg\{2\sqrt{\frac{C}{\kappa_-}}, \frac{10}{\kappa_-}\bigg\}\lambda\sqrt{s}=C'\lambda\sqrt{s}, \$
where $C'=\max\{2\sqrt{C/\kappa_-}, 10/ \kappa_-\}$.

Naturally, we obtain
	\$\label{}
	\big\langle \ti{\cL}(\btt)-\ti{\cL}(\bttc),\btt-\bttc\big\rangle
	\leq \kappa_+ \|\btt-\bttc\|_2^2
	\leq C'^2\kappa_+\lambda^2s.
	\$
	
	\noindent This completes the proof.
\end{proof}

\begin{lemma}\label{lem:3.15}
Assume the same conditions in Theorem \ref{thm:tscomplexity} hold.
The solution sequence $\{\btt^k\}_{k=0}^\infty$ always satisfies that
	\begin{gather*}
	 F(\btt^{k})-F(\bttc)\leq C\lambda^2s,  \\
	 \|\btt^{k}_{S^c}\|_0\leq \ti{s},~\mbox{and}~\|\btt^{k}-\bttc\|_2\leq C'\lambda\sqrt{s}.
	\end{gather*}
	for $k\geq 0$, where  $C=3/(2\kappa_-)$ and $C'= 10/\kappa_-$.
\end{lemma}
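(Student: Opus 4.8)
The plan is to establish all three bounds at once by induction on $k$, with the monotone decrease of the penalized objective $F(\btt)=\ti{\cL}(\btt)+\lambda\|\btt\|_1$ along the LAMM trajectory serving as the engine. The three invariants are coupled: the objective bound feeds the $\ell_2$ bound through Lemma \ref{lem:0822.1}, whereas the sparsity has to be re-established at every step rather than simply inherited.

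First I would settle the base case $k=0$, where $\btt^0=\ti\bbeta^1$ is the first-stage output. The sparsity $\|\btt^0_{S^c}\|_0\le\ti s$ is exactly Proposition \ref{prop:stage1sparsity}, and the error bounds $\|\btt^0-\bttc\|_1\le 36\rho_*^{-1}\lambda s$ and $\|\btt^0-\bttc\|_2\le 18\rho_*^{-1}\lambda\sqrt s$ are Proposition \ref{prop:contraction}. From these, together with the gradient control $\|\nabla\cL(\bttc)\|_\infty\le\lambda/2$ and the folded-concave structure (so that $h=\lambda\|\cdot\|_1-p_\lambda$ is nonnegative and $\lambda$-Lipschitz, and $p'_\lambda$ vanishes on the strong-signal support), I would bound $F(\btt^0)-F(\bttc)$ by $C\lambda^2 s$. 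With this objective bound and the sparsity in hand, Lemma \ref{lem:0822.1} then upgrades the crude first-stage $\ell_2$ error into the sharper $\|\btt^0-\bttc\|_2\le C'\lambda\sqrt s$; this is precisely why the $\ell_2$ radius is restated in terms of $\kappa_-$ rather than the (larger) first-stage factor $\rho_*^{-1}$.

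For the inductive step, assume the three invariants hold at iteration $k-1$. The objective bound propagates for free: Lemma \ref{lem:diff2} gives $F(\btt^k)\le F(\btt^{k-1})$, so $F(\btt^k)-F(\bttc)\le F(\btt^{k-1})-F(\bttc)\le C\lambda^2 s$. The sparsity $\|\btt^k_{S^c}\|_0\le\ti s$ follows from the single-step sparsity bound of Lemma \ref{lem:stepsparsity} applied to the soft-thresholding update $\btt^k=T_{\ti{\cL},\lambda,\phi}(\btt^{k-1})$, using the proximity of $\btt^{k-1}$ to $\bttc$ guaranteed by the induction hypothesis. Finally, now that both the objective bound and the sparsity hold at $k$, a second invocation of Lemma \ref{lem:0822.1} yields $\|\btt^k-\bttc\|_2\le C'\lambda\sqrt s$, closing the induction.

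Throughout I must certify that each iterate stays in the local region $B_1(r,\bttc)$ where the LSE condition and the local convexity of $\ti{\cL}$ are in force, so that Lemmas \ref{lem:diff2}, \ref{lem:0822.1} and \ref{lem:stepsparsity} apply. This is handled by combining the $\ell_1$ cone property of Lemma \ref{lem:akkt1} with the just-derived $\ell_2$ bound: on the cone $\|\btt^k-\bttc\|_1\lesssim\sqrt s\,\|\btt^k-\bttc\|_2\lesssim\lambda s\le r$, since $\lambda\lesssim r/s$. The step I expect to be the main obstacle is the sparsity propagation: the objective and $\ell_2$ bounds ride on the monotonicity of $F$, but $\|\btt^k_{S^c}\|_0\le\ti s$ is not monotone and must be re-derived at each step from the smallness of $\nabla\ti{\cL}$ on $S^c$, which itself rests on the LSE condition and the induction hypothesis. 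Reconciling the base-case objective constant with exactly $C=3/(2\kappa_-)$ while only the $\rho_*$-based first-stage bounds are available is the other point that will require careful bookkeeping.
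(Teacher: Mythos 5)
Your induction step coincides with the paper's: monotone descent of $F$ via Lemma \ref{lem:diff2} propagates the objective bound, Lemma \ref{lem:stepsparsity} re-establishes the sparsity at every iteration, and Lemma \ref{lem:0822.1} then delivers the $\ell_2$ radius; the ordering differences are immaterial. Where you genuinely diverge is the base case. You propose to bound $F(\btt^{0})-F(\bttc)$ from the first-stage error bounds of Proposition \ref{prop:contraction} together with smoothness of $\cL$ and the $\lambda$-Lipschitz property of $p_\lambda$; this does give a bound of order $\lambda^2 s$, but with a constant of order $\rho_*^{-1}+\rho_+\rho_*^{-2}$ rather than the stated $C=3/(2\kappa_-)$, and since $C$ feeds into the hypotheses of Lemmas \ref{lem:basicineq}, \ref{lem:0822.1} and \ref{lem:stepsparsity} (whose output constants, including $C'$ and the sparsity threshold $C_0$, all depend on it), your version proves the lemma only with inflated constants --- exactly the bookkeeping problem you flag. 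The paper avoids this by a different mechanism: the first-stage stopping rule $\omega^1_\lambda(\btt^{0})\leq\varepsilon$ is converted, via the triangle inequality between a subgradient of $\lambda\|\cdot\|_1$ and one of $p_\lambda$ (their difference is bounded by $\lambda$ in sup-norm), into the approximate KKT bound $\omega^2_\lambda(\btt^{0})\leq 3\lambda/2$ for the second-stage objective, and Lemma \ref{0908.1} (convexity of $F$ plus the cone property) then turns this directly into $F(\btt^{0})-F(\bttc)\lesssim\kappa_-^{-1}\lambda^2 s$ with the sharp constant. If you want the lemma as stated, you should adopt that route for the base case. One smaller point: to certify $\btt^{k}\in B_1(r,\bttc)$ for mid-stage iterates you cite Lemma \ref{lem:akkt1}, but that lemma is for approximate stage solutions satisfying $\omega_\lambda(\tbt)\leq\varepsilon$; the correct tool for iterates carrying only the objective bound is the approximate cone inequality of Lemma \ref{lem:basicineq}, which yields $\|\btt^{k}-\bttc\|_1\lesssim\sqrt{s}\,\|\btt^{k}-\bttc\|_2+\lambda s\lesssim\lambda s\leq r$ from the induction hypothesis.
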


\begin{proof}[Proof of Lemma \ref{lem:3.15}]
We prove the theorem by mathematical induction on $k$.\\
	
\noindent	{\bf Base case}: The stopping criterion in the first stage implies that  $\omega^1_{\lambda}(\btt^{0})\leq \varepsilon$. On the other hand, the optimality condition in the second stage can be written as
\$
	\omega^2_{\lambda}(\btt^0)&=\min_{\bxi\in \partial p_{\lambda}(\bbeta^0)} \Big\{\|\nabla\cL(\btt^{0})+\bxi\|_\infty\Big\}
\$
which,  together with the triangle inequality, yields
\$
\omega^2_{\lambda}(\btt^0)
	&\!\leq\! \min_{\bxi_1\in\partial\norm{\bbeta^0}_1,\bxi_2\in\partial p_{\lambda}(\bbeta^0)} \Big\{\|\nabla\cL(\btt^{0})\!+\!\lambda\bxi_1\|_\infty\!+\!\|\lambda\bxi_1-\bxi_2\|_\infty    \Big \}.
\$
Given that $\varepsilon\leq \lambda/2$ and $\!\|\lambda\bxi_1-\bxi_2\|_\infty\leq\lambda$, we obtain
\$
\omega^2_{\lambda}(\btt^{0})\leq 3\lambda/2,
\$
Thus the initialization satisfies that
	\$
	\|(\btt^0)_{\cE_\ell^c}\|_0\leq \ti s,~\omega_{\lambda}(\btt^{0})\leq 3\lambda/2,~\mbox{and}~ \phi\leq \gamma_u\rho_+(2s+2\ti s,r).
	\$
Therefore, using  Lemma \ref{0908.1}, we obtain
	\$
	F(\btt^0)-F(\bttc)\leq C\lambda^2s, ~~\mbox{where}~~C=3/(2\kappa_-).
	\$
	Therefore, directly applying Lemma \ref{lem:0822.1} results
		\$
	\|\btt^0-\bttc\|_2\leq C'\lambda\sqrt{s},
	\$
	where $C'=10/\kappa_-$. \\

\noindent {\bf Induction step}:
\noindent	Suppose that, at the $(k\!-\!1)$-th iteration of the LAMM method in the second stage, we have
	\$
	\|(\btt^{k-1})_{S^c}\|_0\leq \ti s,~\phi\leq \gamma_u\rho_+, ~\mbox{and}~F(\btt^{k-1})-F(\bttc)\leq C\lambda^2s.
	\$
 Then according to Lemma \ref{lem:stepsparsity}, we have that the solution to the  LAMM method at the $k$th iteration is $(s+\ti s)$-sparse:   $\btt^{k}=T_{\phi,\lambda}(\btt^{k-1})$ satisfies
	$
	\|(\btt^{k})_{S^c}\|_0\leq \ti s.
	$
	Thus Lemma \ref{lem:diff2} implies	\$
	F(\btt^{k})\leq F(\btt^{k-1})- \frac{\phi}{2}\|\btt^{k}-\btt^{k-1}\|.
	\$
	which implies that
	\$
	F(\btt^{k})-F(\bttc)\leq F(\btt^{k-1})-F(\bttc)-\frac{\phi}{2}\|\btt^{k}-\btt^{k-1}\|_2^2\leq C\lambda^2s.
	\$
Therefore we have the induction holds at the $k$th iteration:
	\$
	\|(\btt^{k})_{S^{c}}\|_0\leq \ti s,\phi\leq \gamma_u\rho_+(2s+2\ti s), ~\mbox{and}~F(\btt^{k})-F(\bttc)\leq C\lambda^2s.
	\$
Using Lemma \ref{lem:0822.1},  for $C'$ defined as before, we obtain
	\$
	\|\btt^{k}-\bttc\|_2\leq C'\lambda\sqrt{s}.
	\$
We complete induction on $k$.
\end{proof}

\begin{lemma}\label{lem:stepsparsity}
   Let  $\|\nabla\cL(\bttc)\|_\infty+\varepsilon\leq \lambda/2$.  	Let $\btt\in B_1(r,\bttc)$ satisfy
	$
	\|\btt_{S^c}\|_0\leq \ti s$ and $F(\btt)-F(\bttc)\leq C\lambda^2s$. Let $C_0=212\gamma_u\rho_+/\kappa_-+1600\left(\rho_+/\kappa_-\right)^2$. If $\ti s\geq C_0s$,
 then the one-step LAMM algorithm produces a $(s+\ti s)$-sparse solution: 
$\label{}
\|(T_{\ti{\cL},\lambda,\phi}(\btt))_{S^c}\|_0\leq \ti s.
$
\end{lemma}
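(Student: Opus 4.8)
The plan is to characterize the active coordinates of the one-step update and then count them in two groups according to the size of the incoming iterate. Write $\btt^{+}=T_{\ti{\cL},\lambda,\phi}(\btt)=S(\btt-\phi^{-1}\nabla\ti{\cL}(\btt),\lambda/\phi)$ and let $J=\supp(\btt^{+})\cap S^{c}$; the goal is $|J|\le\ti s$. By the definition of the soft-thresholding operator, $j\in J$ forces $|\phi\btt_{j}-[\nabla\ti{\cL}(\btt)]_{j}|>\lambda$. I would split $J=J_{a}\cup J_{b}$ with $J_{a}=\{j\in J:|\btt_{j}|>\lambda/(4\phi)\}$ (large incoming coordinates) and $J_{b}=\{j\in J:|\btt_{j}|\le\lambda/(4\phi)\}$ (small or newly activated coordinates), and bound each cardinality separately.

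For $J_{a}$ I would use an $\ell_{1}$ budget. Lemma \ref{lem:basicineq} applies under the present hypotheses and, after dropping the nonnegative $\frac{\kappa_-}{2}\norm{\btt-\bttc}_2^{2}$ term and inserting the bound $\norm{\btt-\bttc}_2\le C'\lambda\sqrt s$ from Lemma \ref{lem:0822.1} (with $C'=10/\kappa_-$ for the operative $C=3/(2\kappa_-)$), it yields $\norm{\btt_{S^{c}}}_{1}=\norm{(\btt-\bttc)_{S^{c}}}_{1}\le 53\kappa_-^{-1}\lambda s$. Since every $j\in J_{a}$ has $|\btt_{j}|>\lambda/(4\phi)$, a Markov-type count gives $|J_{a}|\le 4\phi\norm{\btt_{S^{c}}}_{1}/\lambda$, and combining with $\phi\le\gamma_u\rho_+$ from Lemma \ref{ls} produces $|J_{a}|\le 212\gamma_u\rho_+ s/\kappa_-$, exactly the first term of $C_{0}$.

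For $J_{b}$ I would argue through the localized smoothness of $\ti{\cL}$. For $j\in J_{b}$ we have $\phi|\btt_{j}|\le\lambda/4$, so the active condition forces $|[\nabla\ti{\cL}(\btt)]_{j}|>3\lambda/4$. Because $j\in S^{c}$ gives $\bttc_{j}=0$ and the concave shift has vanishing gradient there, $[\nabla\ti{\cL}(\bttc)]_{j}=[\nabla\cL(\bttc)]_{j}$, whose magnitude is at most $\lambda/2$ by hypothesis; hence $|[\nabla\ti{\cL}(\btt)-\nabla\ti{\cL}(\bttc)]_{j}|>\lambda/4$ for every $j\in J_{b}$. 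Writing this gradient difference as $\nabla^{2}\ti{\cL}(\bar\btt)(\btt-\bttc)$ by the mean value theorem and restricting to any subset $J_{b}'\subseteq J_{b}$ with $|J_{b}'|=\min(|J_{b}|,\ti s)$, the union of $J_{b}'$ with the $(s+\ti s)$-sparse support of $\btt-\bttc$ has size at most $s+2\ti s\le 2s+2\ti s$, so the LSE bound applies and gives $\norm{[\nabla^{2}\ti{\cL}(\bar\btt)(\btt-\bttc)]_{J_{b}'}}_{2}\le\kappa_+\norm{\btt-\bttc}_{2}\le\rho_+C'\lambda\sqrt s$. Comparing with the lower bound $|J_{b}'|(\lambda/4)^{2}$ yields $|J_{b}'|<1600(\rho_+/\kappa_-)^{2}s$.

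Finally I would resolve the circular sparsity dependence and assemble the bound. If $|J_{b}|\le\ti s$ then $J_{b}'=J_{b}$ and the previous estimate bounds $|J_{b}|$ directly; if instead $|J_{b}|>\ti s$ then $|J_{b}'|=\ti s$ and the same estimate gives $\ti s<1600(\rho_+/\kappa_-)^{2}s\le C_{0}s\le\ti s$, a contradiction, so $|J_{b}|>\ti s$ cannot occur. Adding the two parts, $|J|=|J_{a}|+|J_{b}|<(212\gamma_u\rho_+/\kappa_-+1600(\rho_+/\kappa_-)^{2})s=C_{0}s\le\ti s$, which is the claim. I expect the main obstacle to be exactly this circular use of the localized eigenvalues: the smoothness bound on $J_{b}$ is only valid at sparsity level $2s+2\ti s$, so one cannot apply it to $J_{b}$ directly when $|J_{b}|$ is a priori large, and the subset-plus-contradiction device, powered by the standing assumption $\ti s\ge C_{0}s$, is what closes this gap. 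A secondary point requiring care is that working with $\ti{\cL}$ rather than with $\cL$ and the concave penalty separately is what lets the concavity be absorbed into $\kappa_+\le\rho_+$ and keeps the gradient of the shift from polluting the count on $S^{c}$.
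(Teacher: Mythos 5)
Your proposal is correct and follows essentially the same route as the paper's proof: the paper splits the active coordinates on $S^c$ into three sets $S^1,S^2,S^3$ according to whether $|\btt_j|$, $|\nabla\ti{\cL}(\bttc)_j|$, or the gradient difference exceeds its share of $\lambda/\phi$, bounds $|S^1|$ by the same Markov/$\ell_1$-cone count (constant $212$), shows $S^2=\emptyset$ from $\|\nabla\cL(\bttc)\|_\infty\le\lambda/2$, and bounds $|S^3|$ by the same mean-value/sparse-eigenvalue argument with the same subset-plus-contradiction device (constant $1600$). Your two-set decomposition merely absorbs the empty set $S^2$ into the analysis of the small-coordinate group, which is a cosmetic reorganization rather than a different argument.
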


\begin{proof}[Proof of Lemma \ref{lem:stepsparsity}]
	For simplicity, we write $\bar{\btt}\!=\!\btt-{\phi}^{-1}{\nabla\ti{\cL}(\btt)}$. To show that  $\|\big(S(\bar\btt,{\phi}^{-1}{\blam})\big)_{S^c}\|_0\!\leq\! \ti s$, it suffices to prove that, for any  $j\!\in\! \cE^c$, the  total number of $\beta_j$'s such that $\bar{\beta}_j\!>\!\lambda_j/\phi$ is no more than $\ti s$. 
We first write $\bar\btt$ as
	\$
	\bar\btt&=\btt-\frac{1}{\phi}\nabla\ti{\cL}(\btt)=\btt-\frac{1}{\phi}\nabla\ti{\cL}(\bttc)+\frac{1}{\phi}\nabla\ti{\cL}(\bttc)-\frac{1}{\phi}\nabla\ti{\cL}(\btt).
	\$
	Define   $\ti{S}=\{j\in S^c: (\btt-{\phi}^{-1}{\nabla\ti{\cL}(\btt)})_j=\lambda_j/\phi\}$, and notice that $\{j:(T_{\blam,\phi}(\btt))_j\neq 0\}\subseteq \ti{S}$, thus it suffices to show $\vert\ti{S}\vert\leq \ti s$.
We further  define $S^1$, $S^2$ and $S^3$ as:
	\begin{align}\label{}
	S^1&\equiv \Big\{j\in S^c: |\btt_j|\geq \frac{1}{4}\cdot \frac{\lambda}{\phi}\Big\}, \label{0819.1}\\
	S^2&\equiv \Big\{j\in S^c: |\nabla\ti{\cL}(\bttc)_j/\phi|>\frac{1}{2}\cdot\frac{\lambda}{\phi}\Big\},  \label{0819.2}\\
	S^3&\equiv \Big\{j\in S^c: \Big|\Big(\frac{\nabla\ti{\cL}(\btt)-\nabla\ti{\cL}(\bttc)}{\phi}\Big)_j\Big|>\frac{1}{4}\cdot\frac{\lambda}{\phi}\Big\}.  \label{0819.3}
	\end{align}
 We immediately have $\ti S\subseteq S^1\cup S^2\cup S^3$. It suffices to prove that $|S^1|+|S^2|+|S^3|\leq \ti s$. The assumption that $\|\nabla\cL(\bttc)\|_\infty\!+\!\varepsilon\!\leq\! \lambda/2$ implies $S^2=\emptyset$.
In what follows, we bound $|S^1|$ and $|S^3|$,  respectively. \\

\noindent \textbf{{Bound for $|S^1|$:}}\\
\noindent For $\forall j\in S^c$, we have $\beta_j^\circ=0$. Using Markov inequality, we obtain
	\$
	|S^1|&=\Big|\Big\{j\in S^c: |\btt_j|\geq \frac{1}{4}\cdot \frac{\lambda}{\phi}\Big\}\Big|	\leq \sum_{j\in S^c} \frac{4\phi}{\lambda}|\beta_j-\beta_j^*|\leq\frac{4\phi}{\lambda}\|(\btt-\bttc)_{S^c}\|_1.
	\$

It remains to bound  $\|(\btt-\bttc)_{S^c}\|_1$. 
According to Lemma \ref{lem:basicineq} and Lemma \ref{lem:3.15},
	\$\label{}
	\frac{\lambda}{2}\|(\btt-\btt^*)_{S^c}\|_1\leq \frac{5\lambda}{2}\|(\btt-\bttc)_{S}\|_1+3/2\kappa_-^{-1}\lambda^2s.
	\$

Therefore,  $\btt-\bttc$ falls in the approximate $\ell_1$ cone:
	\$
	\|(\btt-\bttc)_{S^c}\|_1
	&\leq5\|(\btt-\bttc)_{S}\|_1 +3\kappa_-^{-1}\lambda s\leq 5C'\lambda s+3\kappa_-^{-1}\lambda s=53\kappa_-^{-1}\lambda s.
	\$
Thus
	\$\label{}
	|S^1|\leq 212\phi\kappa_-^{-1} s\leq {212\kappa_-^{-1}\gamma_u\rho_+}s,
	\$
	where we use the fact $\phi\leq \gamma_u\rho_+$ in the last inequality.\\
	
	
\noindent	\textbf{{Bound for $|S^3|$:}}\\
 Consider an arbitrary subset $S'\subseteq S^3$ with  size $|S'|=s'\leq \ti{s}$.
	Let us further consider  a $p$-dimensional sign vector $\ub$ such that $\|\mbu\|_\infty=1$ and $\|\mbu\|_0=s'$. 
There exists some $\ub$ such that
	\$
	\frac{1}{4}\lambda s'&\leq \sum_{j\in S^c} \frac{1}{4}\lambda_j |u_j|\leq \mbu^T \big\{\nabla\ti{\cL}(\btt)-\nabla\ti{\cL}(\bttc)\big\}.
	\$
 By the Mean Value theorem,  there exists some $\gamma\in[0,1]$ such that $\nabla\ti{\cL}(\btt)-\nabla\ti{\cL}(\bttc)=\big[\nabla^2\ti{\cL}\big(\gamma\btt+(1-\gamma)\bttc\big)\big](\btt-\bttc)$.  
 Let  $\Hb\equiv\big[\nabla^2\ti{\cL}(\gamma\btt+(1-\gamma)\bttc)\big]$. Writing  $\ub^T(\nabla\ti{\cL}(\btt)-\nabla\ti{\cL}(\bttc))$ as $\langle\Hb^{1/2}\ub,\Hb^{1/2}(\btt-\bttc) \rangle$ and applying the H{\"o}lder inequality, we obtain
\#\label{0108.6}
\lambda s'	/4&\!\leq\! \|\Hb^{1/2}\ub\|_2\|\Hb^{1/2}(\btt\!-\!\bttc)\|_2
	\!\leq\! \sqrt{\kappa_+(s',r) s'}\underbrace{\|\Hb^{1/2}(\btt\!-\!\bttc)\|_2}_{\Rom{1}}.
\#
To bound term \Rom{1},  we apply Lemma \ref{lem:3.15} and obtain that
\$
\Rom{1}= \|\Hb^{1/2}(\btt-\bttc)\|_2\leq 10\kappa_-^{-1}\sqrt{\kappa_+(2s+\ti s, r)}\lambda\sqrt{s}.
 \$
Plugging  the above inequality  into \eqref{0108.6},  we obtain
\$
\lambda s'/4\leq \sqrt{\rho_+(s',r)}\sqrt{s'}\times  10\kappa_-^{-1}\sqrt{\rho_+(2s+2\ti s,r)}\lambda\sqrt s.
\$
Taking squares of both sides yields
\$
s'\leq {1600\kappa_-^{-2}\rho_+(s',r)\rho_+}s\leq {1600\kappa_-^{-2}\rho_+^2}s<\ti s,
\$
where the last inequality is due to Condition 1 with $C_1=1600$. Since $s'=|S'|$ achieves the maximum possible value such that $s'\leq \ti s$ for any subset $S'$ of $S^3$ and the above inequality shows that $s'<\ti s$, we must have $S'= S^3$ and 
\$
\abs{S^3}\leq 1600\kappa_-^{-2}\rho_+^2 s<\ti s.
\$
Finally, combining  bounds for $|S^1|$, $|S^2|$ and $|S^3|$, we obtain
	\$
	\|(T_{\ti{\cL},\blam,\phi}(\btt))_{S^c}\|_0&\leq {212\gamma_u\kappa_-^{-1}\rho_+}s+{1600\kappa_-^{-2}\rho_+^2}s\leq \ti s
	\$
due to Condition \ref{cond:lse}.
	
\end{proof}

\end{section}


\begin{section}{Details and Proof of Theorem \ref{thm:minse}}

In \cite{huang2013oracle}, they discussed the M-estimator in Cox's model with Lasso penalty and proved the restricted strong convexity of Cox's model's loss function within a cone near the true signal $\bbeta^*$. We borrow some of their notations and techniques in this section to reveal more details about Theorem \ref{thm:minse} and prove the theorem in a parallel way.

\begin{subsection}{The constant $C^*$ and $C_*$}

Assume that $\mathbb{P}\{\sup\limits_t\norm{X_i(t)}_{\infty}\leq M\}=1$ for some constant $M$. For simplicity, let \[\bS^{(k)}(t,\bbeta)=\frac{1}{n}\sum\limits_{i=1}^n\bX_i(t)^{\otimes k}Y_i(t)e^{\bbeta^{\top}\bX_i(t)},\quad k=0,1,2.\]
\[\bR_n(t,\bbeta)=\frac{1}{n}\sum\limits_{i=1}^n Y_i(t)e^{\bbeta^{\top}\bX_i(t)},\quad \bar{\bX}_n(t,\bbeta))=\frac{\bS^1(t,\bbeta)}{\bS^{(0)}(t,\bbeta)}.\]
\[\bV_n(t,\bbeta)=\frac{1}{n}\sum\limits_{i=1}^n\frac{Y_i(t)e^{\bbeta^{\top}\bX_i(t)}}{\bS^{(0)}(t,\bbeta)}(\bX_i(t)-\bar{\bX}_n(t,\bbeta))^{\otimes 2} = \frac{\bS^2(t,\bbeta)}{\bS^{(0)}(t,\bbeta)}-\bar{\bX}_n(t,\bbeta)^{\otimes 2}.\]

With these notations, the gradient is written as \[\nabla\cL(\bbeta)=-\frac{1}{n}\sum\limits_{i=1}^n\int_0^{t^*}\left[\bX_i(s)-\bar{\bX}_n(s,\bbeta)\right]\mathrm{d}N_i(s),\] and the Hessian matrix of $\cL(\bbeta)$ is \[\nabla^2\cL(\bbeta)=\frac{1}{n}\int_0^{t^*}\bV_n(s,\bbeta)\mathrm{d}\bar{N}(s)=\frac{1}{n}\int_0^{t^*}\sum\limits_{i=1}^n\left\{\bX_i(s)-\bar{\bX}_n(s;\bbeta)\right\}^{\otimes 2}Y_i(s)\exp(\bbeta^{\top}\bX_i(s))\mathrm{d}\Lambda_0(s).\] for some positive $t^*$.

We write the population version of the Hessian matrix as
\begin{equation}
\bSigma(t^*,\bbeta)=\mathbb{E}\int_0^{t^*}\left\{\bX(s)-\bmu(s,\bbeta)\right\}^{\otimes 2}Y(s)\exp\left(\bbeta^{\top}\bX(s)\right)\mathrm{d}\Lambda_0(s)
\label{eq:truncatedhessian}
\end{equation}
with
\[\bmu(t,\bbeta)=\frac{\mathbb{E}\bX(t)Y(t)\exp(\bbeta^{\top}\bX(t))}{\mathbb{E}Y(t)\exp(\bbeta^{\top}\bX(t))}.\]

Let us define the minimum $s'$-sparse eigenvalue of a matrix.

\begin{dfn}
For any $s'\in\mathbb{Z}^+$,
\begin{enumerate}
\item Define $\pi_-(\bSigma,s')=\inf\limits_{\norm{\bb}_0\leq s'}\frac{(\bb^{\top}\bSigma\bb)^{1/2}}{\norm{\bb}_2}$ as the minimum $s'$-sparse eigenvalue of $\bSigma$.
\item Define $\pi_+(\bSigma,s')=\sup\limits_{\norm{\bb}_0\leq s'}\frac{(\bb^{\top}\bSigma\bb)^{1/2}}{\norm{\bb}_2}$ as the maximum $s'$-sparse eigenvalue of $\bSigma$.
\end{enumerate}
\label{def:minse}
\end{dfn}

The minimum and maximum $s'$-sparse eigenvalues are closely related to LSE in that $\pi_-(\nabla^2\cL(\bbeta^*),s')=\rho_-(s',0)$ and $\pi_+(\nabla^2\cL(\bbeta^*),s')=\rho_+(s',0)$. In Theorem \ref{thm:minse},
\begin{itemize}
\item $C_-(s')=\pi_-(\bSigma(t^*,\bbeta^*),s').$
\item $C_*$ is the smallest eigenvalue of $\bSigma(t^*,\bbeta^*)$.
\item $C_+(s')=\pi_+(\bSigma(t^*,\bbeta^*),s').$
\item $C^*$ is the largest eigenvalue of $\bSigma(t^*,\bbeta^*)$.
\end{itemize}

\end{subsection}

\begin{subsection}{Proof of Theorem \ref{thm:minse}}
The proof closely follows the proof of Theorem 4.1 in \cite{huang2013oracle}. The procedures of proving the probabilistic upper bound and lower bound of LSE are symmetric, hence we only provide the proof for the lower bound.

\begin{proof}
Define \[\hat{\bG}_n(t):=n^{-1}\sum\limits_{i=1}^n\left\{\bX_i-\bar{\bX}_n(t,\bbeta^*)\right\}^{\otimes 2}Y_i(t)\exp(\bbeta^{*\top}\bX_i(t)),\] \[\bG_n(t):=n^{-1}\sum\limits_{i=1}^n\left\{\bX_i-\bmu(t,\bbeta^*)\right\}^{\otimes 2}Y_i(t)\exp(\bbeta^{*\top}\bX_i(t)).\] The definition of $\bar{\bX}_n(t,\bbeta)$ and $\bmu(t,\bbeta)$ could be found in Section \ref{sec:eigen}.

With the notation above, we write the Hessian as $\ddot{\cL}(\bbeta)=\int_0^{t^*}\hat{\bG}_n(s,\bbeta)\mathrm{d}\Lambda_0(s)$ and its population version as $\bSigma(t^*,\bbeta)=\mathbb{E}\int_0^{t^*}\bG_n(s,\bbeta)\mathrm{d}\Lambda_0(s)$.

By the definition of $\hat{\bG}_n(t,\bbeta)$ and $\bG_n(t)$, we have $\bG_n(t,\bbeta)=\hat{\bG}_n(t,\bbeta)+\left\{\bar{\bX}_n(t,\bbeta)-\bmu(t,\bbeta)\right\}^{\otimes2}$. Hence,
\begin{equation}
\ddot{\cL}(\bbeta)=\int_0^{t^*}\bG_n(s,\bbeta)-\left\{\bar{\bX}_n(s,\bbeta)-\bmu(s,\bbeta)\right\}^{\otimes2}\mathrm{d}\Lambda_0(s).
\label{eq:1}
\end{equation}
We first bound the second term on the right hand side of \ref{eq:1}. Define \[R_n(t,\bbeta):=n^{-1}\sum\limits_{i=1}^nY_i(t)\exp(\bX_i^{\top}\bbeta),\]\[\bDelta(t,\bbeta):=R_n(t,\bbeta)\left\{\bar{\bX}_n(t,\bbeta)-\bmu(t,\bbeta)\right\}=n^{-1}\sum\limits_{i=1}^nY_i(t)\exp(\bX_i^{\top}\bbeta)\left\{\bar{\bX}_n(t,\bbeta)-\bmu(t,\bbeta)\right\}.\]
Since $Y_i(t)$ is non-increasing in $t$,
\begin{equation}
0\leq\int_0^{t^*}\left\{\bar{\bX}_n(s,\bbeta)-\bmu(s,\bbeta)\right\}\mathrm{d}\Lambda_0(s)\leq\frac{\int_0^{t^*}\bDelta^{\otimes2}(t,\bbeta)\mathrm{d}\Lambda_0(s)}{R_n^2(t^*,\bbeta)}
\label{eq:2}
\end{equation}
Since $R_n(t^*,\bbeta)$ is the average of i.i.d. variables uniformly bounded by $M$ and $\mathbb{E}R_n(t^*,\bbeta)=r_*$, the Hoeffding inequality gives
\[
\mathbb{P}\left(R_n(t^*,\bbeta)<r_*/2\right)\leq\exp(-nr_*^2/8M^2).
\]
Since $\bDelta(t,\bbeta)$ is an average of i.i.d. mean zero vectors, \[\left(n^2\int_0^{t^*}\bDelta^{\otimes2}(t,\bbeta)\mathrm{d}\Lambda_0(s)\right)_{jk}\] is a degenerate $V-$statistics for each $(j,k)$. Moreover, since the summands of these V-statistics are all bounded by $4M^2\Lambda_0(t^*)$, Lemma \ref{lem:Vstat} yields
\[\max\limits_{1\leq j,k\leq p}\mathbb{P}\left\{\pm\left(\int_0^{t^*}\bDelta^{\otimes2}(t,\bbeta)\mathrm{d}\Lambda_0(s)\right)_{jk}>4M^2\Lambda_0(t^*)t^2\right\}\leq3\exp\left(\frac{-nt^2/2}{1+t/3}\right).\]
Thus, by (\ref{eq:1}), (\ref{eq:2}), the above two probabilistic bounds and Lemma \ref{lem:se},
\begin{equation}
\pi_-(\ddot{\cL}(\bbeta^*),s)\geq\pi_-\left(\int_0^{t^*}\bG_n(t,\bbeta^*)\mathrm{d}\Lambda_0(t),s\right)-4sM^2\Lambda_0(t^*)t^2_{n,p,\varepsilon}/(r_*/2)
\label{eq:3}
\end{equation}
with probability at least $1-\varepsilon-\exp(-nr_*^2/8M^2)$.

Finally, $\int_0^{t^*}\bG_n(s,\bbeta)\mathrm{d}\Lambda_0(s)$ is an average of i.i.d. matrices with mean $\bSigma(t^*,\bbeta)$. The summands of $\left(\int_0^{t^*}\bG_n(s,\bbeta)\mathrm{d}\Lambda_0(s)\right)_{jk}$ are uniformly bounded by $4M^2\Lambda_0(t^*)$, so that the Hoeffding inequality gives
\begin{equation}
\mathbb{P}\left\{\max\limits_{j,k}\abs{\left(\int_0^{t^*}\bG_n(s,\bbeta)\mathrm{d}\Lambda_0(s)-\bSigma(t^*,\bbeta)\right)_{jk}}>4M^2\Lambda_0(t^*)\right\}\leq p(p+1)\exp(-nt^2/2)
\label{eq:4}
\end{equation}

By (\ref{eq:3}), (\ref{eq:4}) with $t=L_n(p(p+1)/\varepsilon)$ and Lemma \ref{lem:se}, we have
\begin{equation}
\pi_-(\ddot{\cL}(\bbeta^*),s)\geq\pi_-(\bSigma(t^*,\bbeta^*),s)-4sM^2\left[(1+\Lambda_0(t^*))L_n(p(p+1)/\varepsilon)+2\Lambda_0(t^*)t^2_{n,p,\varepsilon}/r_*\right]
\label{eq:a7}
\end{equation}
with probability at least $1-2\varepsilon-\exp(-nr_*^2/8M^2)$.

(\ref{eq:a7}) gives a probabilistic lower bound for the minimum $s$-sparse eigenvalue at $\ddot{\cL}(\bbeta^*)$. Now we extend this lower bound to the neighborhood of $\bbeta^*$.
According to Lemma \ref{lem:neighborhood}, we have
\begin{equation}
\begin{split}
&\rho_-(s,r) \\
 \geq&\exp\left(-2\sup\limits_{\norm{\bbeta-\bbeta^*}_1\leq r, \norm{\bbeta}_0\leq s}\max\limits_{t\geq 0}\max\limits_{i,j}\abs{(\bbeta-\bbeta^*)^{\top}(X_i(t)-X_j(t))}\right)\pi_-(\ddot{\cL}(\bbeta^*),s) \\
 \geq&\exp(-4rM)\pi_-(\ddot{\cL}(\bbeta^*),s) \\
\end{split}
\label{eq:a8}
\end{equation}

Combining (\ref{eq:a8}) with (\ref{eq:a7}) completes the proof.
\end{proof}

\begin{lemma}
Let $X_i$ be a sequence of independent stochastic processes and $f_{i,j}$ be functions of $X_i$ and $X_j$ with $\abs{f_{i,j}}\leq 1$. Suppose $f_{i,j}$ are degenerate in the sense of $\mathbb{E}\left[f_{i,j}\vert X_i\right]=\mathbb{E}\left[f_{i,j}\vert X_j\right]=0$ for all $i\neq j$. Let $V_n=\sum\limits_{i=1}^n\sum\limits_{j=1}^nf_{i,j}$. Then \[\mathbb{P}\left\{\pm V_n>(nt)^2\right\}\leq\frac{2\varepsilon_n(t)(1+\varepsilon_n(t))}{(1+\varepsilon_n^2(t))^2}\leq 3\exp\left(-\frac{nt^2/2}{1+t/3}\right),\] where $\varepsilon_n(t)=\exp\left(-\frac{nt^2/2}{1+t/3}\right)$.
\label{lem:Vstat}
\end{lemma}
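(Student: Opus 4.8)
The plan is to reduce the quadratic (degenerate) chaos $V_n$ to a pair of \emph{linear} tail events, each controlled by the scalar Bernstein inequality, and then to show that the probability of their conjunction collapses to the stated closed form. First I would record that $\varepsilon_n(t)$ is exactly the Bernstein tail of a single linear sum: for independent mean-zero variables $\xi_1,\dots,\xi_n$ with $|\xi_i|\le 1$ and $\mathbb{E}\xi_i^2\le 1$ one has $\mathbb{P}(\sum_i\xi_i>nt)\le\exp(-\tfrac{nt^2/2}{1+t/3})=\varepsilon_n(t)$. Since the diagonal terms satisfy $|\sum_i f_{ii}|\le n\ll(nt)^2$, they are absorbed into the threshold and we may treat $V_n$ as the off-diagonal, genuinely degenerate chaos; the two-sided statement follows by applying the one-sided bound to $\pm V_n$, which is the source of the leading factor $2$.

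Next comes decoupling, which is the device that handles the \emph{general} (non-product) kernel. Introduce an independent copy $X_1',\dots,X_n'$ and pass from $V_n$ to the decoupled chaos $\tilde V_n=\sum_{i\ne j}f_{ij}(X_i,X_j')$; because the kernel is degenerate, the standard decoupling comparison bounds the tail of $V_n$ by that of $\tilde V_n$. Now condition on $X'=(X_1',\dots,X_n')$. For each $i$ the inner sum $h_i:=\sum_{j}f_{ij}(X_i,X_j')$ satisfies $\mathbb{E}[h_i\mid X']=\mathbb{E}_{X_i}[h_i]=0$ by the degeneracy hypothesis $\mathbb{E}[f_{ij}\mid X_j]=0$, so conditionally on $X'$ the outer sum $\tilde V_n=\sum_i h_i(X_i)$ is a sum of independent centered summands. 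The boundedness needed is precisely the one available in the application, where $f_{ij}=\int a_i(s)b_j(s)\,d\Lambda_0(s)$: there $h_i=\langle a_i,B'\rangle_{L^2(\Lambda_0)}$ with $B'=\sum_j b_j'$, so that $|h_i|\le\|a_i\|\,\|B'\|$ and $\mathbb{E}[h_i^2\mid X']\le\|B'\|^2\,\mathbb{E}\|a_i\|^2$. Hence the scalar Bernstein inequality applies conditionally on $X'$ and yields for the event $\{\tilde V_n>(nt)^2\}$ a conditional probability of the form $\exp(-c(\|B'\|)\,nt^2)$ whose rate degrades with $\|B'\|$.

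The remaining work is to layer over $\|B'\|$. The quantity $\|B'\|=\|\sum_j b_j'\|_{L^2(\Lambda_0)}$ is itself the norm of a sum of $n$ independent, mean-zero, bounded Hilbert-space increments, so its upper tail is again governed by a Bernstein-type bound with the \emph{same} rate $\varepsilon_n(t)$. Decomposing the range of $\|B'\|$ dyadically and pairing, on each layer, the inner conditional Bernstein factor against the outer tail $\mathbb{P}(\|B'\|\in\text{layer})$, one obtains a sum of products of two exponential factors. I expect the \textbf{main obstacle} to lie precisely here: carrying out this two-exponential bookkeeping so that the geometric-type series in powers of $\varepsilon_n(t)$ sums \emph{exactly} to $\tfrac{2\varepsilon_n(t)(1+\varepsilon_n(t))}{(1+\varepsilon_n^2(t))^2}$, while simultaneously controlling the decoupling constant so that it does not inflate the rate inside the exponent. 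This is the step that must exploit the bilinear, bounded-increment structure of the kernel in full, since generic chaos inequalities would deliver only an unspecified $C\exp(-c\,nt^2)$.

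Finally, the second inequality $\tfrac{2\varepsilon(1+\varepsilon)}{(1+\varepsilon^2)^2}\le 3\varepsilon$ is elementary: writing $\varepsilon=\varepsilon_n(t)\in(0,1]$ it is equivalent to $2(1+\varepsilon)\le 3(1+\varepsilon^2)^2$, i.e. to $3(1+\varepsilon^2)^2-2(1+\varepsilon)=1-2\varepsilon+6\varepsilon^2+3\varepsilon^4\ge 0$; the quadratic $1-2\varepsilon+6\varepsilon^2$ has negative discriminant and is therefore strictly positive, and $3\varepsilon^4\ge 0$, so the bound holds for every admissible $\varepsilon$. This converts the sharp combinatorial constant into the clean exponential bound $3\exp(-\tfrac{nt^2/2}{1+t/3})$ that is actually used in the proof of Theorem \ref{thm:minse}.
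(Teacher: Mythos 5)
First, a point of reference: the paper contains no proof of this lemma to compare against --- the appendix states explicitly that the proof is omitted because it can be found in \cite{huang2013oracle}. Your proposal must therefore stand on its own, and it does not. The decisive gap is the one you flag yourself as the ``main obstacle'': your plan defers exactly the step that constitutes the lemma, namely obtaining the prefactor $\frac{2\varepsilon_n(t)(1+\varepsilon_n(t))}{(1+\varepsilon_n^2(t))^2}$ while keeping the Bernstein exponent $\frac{nt^2/2}{1+t/3}$ undamaged. Any generic chaos argument yields $C\exp(-c\,nt^2)$ with unspecified constants; the content of the lemma lies entirely in the explicit constants, so a proof that postpones the ``two-exponential bookkeeping'' has proved nothing. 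Worse, the route you commit to cannot produce those constants even in principle: the de la Pe\~na--Montgomery-Smith decoupling inequality compares tails in the form $\mathbb{P}(|V_n|>x)\le C_1\,\mathbb{P}(C_2|\tilde V_n|>x)$ with absolute constants $C_1,C_2>1$, so decoupling necessarily multiplies the prefactor by $C_1$ and deflates the exponent by $C_2$; this loss is structural, not a matter of careful bookkeeping. It is also unnecessary in the only case where your later steps work, since for bilinear kernels $f_{ij}=\langle a_i,b_j\rangle$ one has $V_n=\langle\sum_i a_i,\sum_j b_j\rangle$ identically, with no decoupling needed.

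Second, your conditional-Bernstein step quietly strengthens the hypotheses. The lemma assumes only $|f_{ij}|\le 1$ and degeneracy; your bound $|h_i|\le\|a_i\|\,\|B'\|$ invokes the bilinear structure $f_{ij}=\int a_i b_j\,\mathrm{d}\Lambda_0$ of the application, which is not part of the statement. Under the stated hypotheses the best available bounds are $|h_i|\le n$ and $\mathbb{E}[h_i^2\mid X']\le n^2$, and Bernstein at threshold $(nt)^2$ then gives an exponent of order $\frac{nt^4/2}{1+t^2/3}$, which is the wrong rate for $t<1$. So even if your layering summed exactly to the stated expression, you would have proved a special case, not the lemma. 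Two further slips: the diagonal terms cannot simply be ``absorbed into the threshold'' --- the inequality is asserted at threshold $(nt)^2$ exactly, the $f_{ii}$ are not assumed centered (in the paper's application they are nonnegative), and their total can be a nonvanishing fraction of $(nt)^2$ in the regime where the bound is nontrivial, so discarding them costs a constant in the exponent; and the regime restriction $nt^2\gtrsim 1$ you invoke is not a hypothesis. The only step of your proposal that is complete and correct is the last one, $\frac{2\varepsilon(1+\varepsilon)}{(1+\varepsilon^2)^2}\le 3\varepsilon$ via $1-2\varepsilon+6\varepsilon^2+3\varepsilon^4\ge 0$. For the argument that actually establishes the displayed bound, you should consult \cite{huang2013oracle}, as the paper itself directs.
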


\begin{lemma}
Let $\bar{\bSigma}$ and $\bSigma$ be two positive semi-definite matrices with elements $\bar{\Sigma}_{jk}$ and $\Sigma_{jk}$.
\begin{enumerate}
\item $\pi_-(\bar{\bSigma},s')\geq\pi_-(\bSigma,s')-s'\cdot\max\limits_{1\leq j\leq k\leq p}\abs{\bar{\Sigma}_{jk}-\Sigma_{jk}}$.
\item If $\bar{\bSigma}\succeq\bSigma$, then $\pi_-(\bar{\bSigma},s')\geq\pi_-(\bSigma,s')$.
\end{enumerate}
\label{lem:se}
\end{lemma}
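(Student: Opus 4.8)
The plan is to argue directly from the variational characterization of the minimum $s'$-sparse eigenvalue. Throughout I read $\pi_-(\bSigma,s')=\inf_{\norm{\bb}_0\leq s',\,\norm{\bb}_2=1}\bb^\top\bSigma\bb$, the minimum $s'$-sparse eigenvalue of $\bSigma$, which is the reading consistent with the identity $\pi_-(\nabla^2\cL(\bbeta^*),s')=\rho_-(s',0)$ recorded after Definition \ref{def:minse}. Part 2 then follows immediately from the semidefinite ordering: writing $\bD=\bar\bSigma-\bSigma\succeq\mathbf{0}$, every admissible vector $\bb$ (with $\norm{\bb}_0\leq s'$ and $\norm{\bb}_2=1$) satisfies $\bb^\top\bar\bSigma\bb=\bb^\top\bSigma\bb+\bb^\top\bD\bb\geq\bb^\top\bSigma\bb$, since $\bb^\top\bD\bb\geq 0$. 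Taking the infimum over all such $\bb$ preserves the inequality and yields $\pi_-(\bar\bSigma,s')\geq\pi_-(\bSigma,s')$, with no further estimates required.

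For Part 1 I again set $\bD=\bar\bSigma-\bSigma$ and write $\epsilon=\max_{1\leq j\leq k\leq p}\abs{\bar\Sigma_{jk}-\Sigma_{jk}}$; by symmetry of $\bD$ this coincides with $\max_{j,k}\abs{D_{jk}}$, so every entry of $\bD$ is bounded in absolute value by $\epsilon$. The crux is a uniform bound on the perturbation of the quadratic form over the sparse cone. Fix an admissible $\bb$ with support $S=\mathrm{supp}(\bb)$, $\abs{S}\leq s'$. The triangle inequality gives
\[
\abs{\bb^\top\bD\bb}\leq\sum_{j,k\in S}\abs{b_j}\,\abs{b_k}\,\abs{D_{jk}}\leq\epsilon\Big(\sum_{j\in S}\abs{b_j}\Big)^2=\epsilon\,\norm{\bb}_1^2,
\]
and since $\bb$ is $s'$-sparse, Cauchy--Schwarz yields $\norm{\bb}_1^2\leq s'\norm{\bb}_2^2=s'$, whence $\abs{\bb^\top\bD\bb}\leq\epsilon\, s'$. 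Consequently $\bb^\top\bar\bSigma\bb\geq\bb^\top\bSigma\bb-\epsilon\,s'\geq\pi_-(\bSigma,s')-\epsilon\,s'$, and taking the infimum over all admissible $\bb$ gives $\pi_-(\bar\bSigma,s')\geq\pi_-(\bSigma,s')-s'\epsilon$, which is the claim.

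The only substantive step, and the heart of the argument, is the passage from the entrywise control $\max_{j,k}\abs{\bar\Sigma_{jk}-\Sigma_{jk}}$ to control of the full quadratic form restricted to $s'$-sparse directions; the payoff of sparsity is precisely the estimate $\norm{\bb}_1^2\leq s'$ in place of the ambient bound $p$, which is exactly what keeps the correction term linear in $s'$ rather than in $p$ and makes the lemma usable in \eqref{eq:3} and \eqref{eq:a7}. I would take care to invoke the symmetry of $\bD$ so that the range $1\leq j\leq k\leq p$ in the statement genuinely bounds every entry appearing in the double sum. Everything else reduces to the triangle inequality, Cauchy--Schwarz, and the monotonicity of the infimum already used in Part 2; no probabilistic input or further structure of $\bSigma,\bar\bSigma$ beyond positive semidefiniteness is needed.
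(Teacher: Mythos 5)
Your proposal is correct and follows essentially the same route as the paper's own proof: the key step in both is the bound $\abs{\bu^\top\bar\bSigma\bu-\bu^\top\bSigma\bu}\leq\norm{\bu}_1^2\cdot\max_{j,k}\abs{\bar\Sigma_{jk}-\Sigma_{jk}}\leq s'\cdot\max_{j,k}\abs{\bar\Sigma_{jk}-\Sigma_{jk}}$ for unit-norm $s'$-sparse $\bu$, with Part~2 following directly from the definition. Your write-up is merely more explicit (and your reading of $\pi_-$ as the quadratic form rather than its square root is the one consistent with how the paper actually uses the lemma), so there is nothing substantive to add.
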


\proof[Proof of Lemma \ref{lem:se}]
     \textcolor{white}{  }

\begin{enumerate}
\item For $\bu$ satisfies $\norm{\bu}_2=1$ and $\norm{\bu}_0\leq s'$, according to Cauchy-Schwarz inequality, \[\abs{\bu^{\top}\bar{\bSigma}\bu-\bu^{\top}\bSigma\bu}\leq\norm{\bu}_1^2\cdot\max\limits_{j,k}\abs{\bar{\Sigma}_{jk}-\Sigma_{jk}}\leq s'\norm{\bu}_2^2\cdot\max\limits_{j,k}\abs{\bar{\Sigma}_{jk}-\Sigma_{jk}}=s'\cdot\max\limits_{j,k}\abs{\bar{\Sigma}_{jk}-\Sigma_{jk}}.\]
\item The proof follows directly from Definition \ref{def:minse}.
\end{enumerate}

\begin{lemma} For any $\bbeta,\bb\in\mathbb{R}^p$, denote $\eta_{\bb}=\max\limits_{t\geq 0}\max\limits_{i,j}\abs{\bb^{\top}[\bX_i(t)-\bX_j(t)]}$, then
\[e^{-2\eta_{\bb}}\cdot\nabla^2\cL(\bbeta)\preceq\nabla^2\cL(\bbeta+\bb)\preceq e^{2\eta_{\bb}}\cdot\nabla^2\cL(\bbeta).\]
\label{lem:neighborhood}
\end{lemma}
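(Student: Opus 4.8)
The plan is to reduce the two-sided Loewner bound to a pointwise-in-time comparison of the integrand, and then to exploit the fact that the Cox Hessian is, at each time, a \emph{weighted} covariance matrix of the covariates whose weights are renormalized to sum to one. Concretely, I would start from the exact observed-information representation
\[
\nabla^2\cL(\bbeta)=\frac1n\int_0^{t^*}\bV_n(s,\bbeta)\,\mathrm d\bar N(s),\qquad \bar N=\sum_{i=1}^n N_i,
\]
where $\bV_n(s,\bbeta)=\sum_{i=1}^n w_i(s,\bbeta)\{\bX_i(s)-\bar\bX_n(s,\bbeta)\}^{\otimes2}$ with normalized weights $w_i(s,\bbeta)=Y_i(s)e^{\bbeta^\top\bX_i(s)}/\sum_k Y_k(s)e^{\bbeta^\top\bX_k(s)}$. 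The crucial point is that the integrating measure $\frac1n\mathrm d\bar N$ does not depend on $\bbeta$, so it suffices to prove the pointwise bound $e^{-2\eta_{\bb}}\bV_n(s,\bbeta)\preceq\bV_n(s,\bbeta+\bb)\preceq e^{2\eta_{\bb}}\bV_n(s,\bbeta)$ for each fixed $s$; integrating against the nonnegative measure $\frac1n\mathrm d\bar N$ then preserves the Loewner order and yields the lemma. It is essential to use this normalized form rather than the unnormalized $\hat\bG_n=\bS^{(0)}\bV_n$, since the scalar factor $\bS^{(0)}(s,\bbeta)$ is not controlled by $\eta_{\bb}$ alone.

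For the pointwise step I would fix $s$, abbreviate $a_i=Y_i(s)e^{\bbeta^\top\bX_i(s)}\ge0$ and $c_i=e^{\bb^\top\bX_i(s)}>0$, so that $w_i(s,\bbeta)=a_i/\sum_k a_k$ and $w_i(s,\bbeta+\bb)=a_ic_i/\sum_k a_kc_k$. The key device is the pairwise-difference identity for a weighted variance: for any $\bu\in\RR^p$,
\[
\bu^\top\bV_n(s,\bbeta)\bu=\tfrac12\sum_{i,j}w_i(s,\bbeta)w_j(s,\bbeta)\bigl(\bu^\top[\bX_i(s)-\bX_j(s)]\bigr)^2,
\]
and likewise for $\bbeta+\bb$. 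This representation is what makes the argument clean: it removes the moving centering term $\bar\bX_n(s,\bbeta)$ and reduces everything to comparing the products $w_iw_j$.

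The comparison of weight products is the heart of the proof. Writing $\bar c=\sum_k w_k(s,\bbeta)c_k$ for the $w(s,\bbeta)$-weighted average of the $c_k$'s, a direct computation gives
\[
\frac{w_i(s,\bbeta+\bb)\,w_j(s,\bbeta+\bb)}{w_i(s,\bbeta)\,w_j(s,\bbeta)}=\frac{c_i}{\bar c}\cdot\frac{c_j}{\bar c}.
\]
By definition of $\eta_{\bb}$ we have $c_i/c_k=e^{\bb^\top[\bX_i(s)-\bX_k(s)]}\in[e^{-\eta_{\bb}},e^{\eta_{\bb}}]$ for all $i,k$; since $\bar c$ is a convex combination of the $c_k$'s it lies between their minimum and maximum, whence $c_i/\bar c\in[e^{-\eta_{\bb}},e^{\eta_{\bb}}]$ for every $i$. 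Therefore each weight-product ratio lies in $[e^{-2\eta_{\bb}},e^{2\eta_{\bb}}]$, and because every term $(\bu^\top[\bX_i(s)-\bX_j(s)])^2$ is nonnegative, summing the pairwise identity gives $e^{-2\eta_{\bb}}\,\bu^\top\bV_n(s,\bbeta)\bu\le\bu^\top\bV_n(s,\bbeta+\bb)\bu\le e^{2\eta_{\bb}}\,\bu^\top\bV_n(s,\bbeta)\bu$ for all $\bu$, i.e. the desired pointwise Loewner bound. The main obstacle — and the step that dictates the whole strategy — is precisely this control of $c_i/\bar c$: it works only because $\eta_{\bb}$ bounds \emph{pairwise} log-weight differences and the weights are normalized, so that the uncontrolled common scale $e^{\bb^\top\bX_i(s)}$ cancels. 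I expect the remaining pieces (the weighted-variance identity and the integration against $\frac1n\mathrm d\bar N$) to be routine.
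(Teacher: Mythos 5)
Your proof is correct, and it is essentially the argument the paper defers to: the paper omits this proof and cites \cite{huang2013oracle}, where the lemma is established in exactly this way --- writing $\bV_n(s,\bbeta)$ as the pairwise-difference form $\tfrac12\sum_{i,j}w_iw_j(\bX_i-\bX_j)^{\otimes2}$ and bounding the ratio of normalized weight products by $e^{\pm2\eta_{\bb}}$ before integrating against $\tfrac1n\,\mathrm{d}\bar N$. Your observation that the normalization is what cancels the uncontrolled common scale $e^{\bb^\top\bX_i(s)}$ is precisely the right point.
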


Proof of Lemma \ref{lem:Vstat} and Lemma \ref{lem:neighborhood} are omitted here since they could be found in \cite{huang2013oracle}.

\end{subsection}
\end{section}

\begin{section}{Proof of Proposition \ref{prop:contraction}}

\begin{proof}
The proof for the $\ell_2$ bound of the first-stage estimator $\ti{\bbeta}^1$ can be found in Lemma 5.1 in \cite{fan2018lamm}.

Based on the $\ell_2$ bound and Lemma \ref{lem:akkt1}, we have
\[	\|(\tbt-\bttc)_{S^c}\|_1\leq \frac{\lambda/2}{\lambda-\lambda/2}\|{(\tbt-\bttc)}_{S}\|_1=\|{(\tbt-\bttc)}_{S}\|_1.\]
Therefore,
\begin{align*}
\norm{\ti{\bbeta}^1-\bbeta^*}_1=&\norm{(\ti{\bbeta}^1-\bbeta^*)_S}_1+\norm{(\ti{\bbeta}^1-\bbeta^*)_{S^c}}_1\leq2\norm{(\ti{\bbeta}^1-\bbeta^*)_S}_1\\
\leq & 2\sqrt{s}\norm{(\ti{\bbeta}^1-\bbeta^*)_S}_2\leq 2\sqrt{s}\norm{\ti{\bbeta}^1-\bbeta^*}_2\leq 36\rho_*\lambda s
\end{align*}
\end{proof}
\end{section}

\begin{section}{Proof of Proposition \ref{prop:maxgrad}}
\proof
The first probabilistic bound for $\norm{\nabla\cL(\bbeta^*)}_{\infty}$ is proved in Theorem 3.2 in \cite{huang2013oracle}.

Write $\ba_i(s)=\bX_i(s)-\bar{\bX}_n(s,\bbeta^*)$ and $\bA_i=\int_0^{t^*}\ba_{iS} \mathrm{d} N_i(s)$. We have
\begin{equation}
\begin{split}
\norm{\nabla\cL(\bbeta^*)_S}_2=&\sqrt{\frac{1}{n^2}\left(\sum\limits_{i=1}^n\int_0^{t^*}\left[\bX_i(s)-\bar{\bX}_n(s,\bbeta^*)\right]_S \mathrm{d} N_i(s)\right)^\top\left(\sum\limits_{i=1}^n\int_0^{t^*}\left[\bX_i(s)-\bar{\bX}_n(s,\bbeta^*)\right]_S \mathrm{d} N_i(s)\right)} \\
=&\sqrt{\frac{1}{n^2}\left(\sum\limits_{i=1}^n\int_0^{t^*}\ba_{iS} \mathrm{d} N_i(s)\right)^\top\left(\sum\limits_{i=1}^n\int_0^{t^*}\ba_{iS} \mathrm{d} N_i(s)\right)}=\sqrt{\frac{1}{n^2}\left(\sum\limits_{i=1}^n\bA_i\right)^\top\left(\sum\limits_{i=1}^n\bA_i\right)}
\end{split}
\label{eq:D3}
\end{equation}

According to Proof of Lemma 3.3 in \cite{huang2013oracle}, let $t_j$ be the time of the $j$th jump of the process $\sum\limits_{i=1}^n\int_0^{\infty} Y_i(t)\mathrm{d} N_i(t)$ and $t_0=0$. Then, for $j\geq0$, \[\bZ_j=\sum\limits_{i=1}^n\int_0^{t_j}\ba_{i}(s)\mathrm{d}N_i(s)\] is a martingale sequence with difference $\norm{\bZ_j-\bZ_{j-1}}_\infty\leq M$. Thus, $\mathbb{E}A_i=0$ and $A_i\indep A_j$ when $i\neq j$. Given that $\norm{\bA_i}_\infty\leq 2M$, we have

\begin{align}
\mathbb{E}\left[\frac{1}{n}\left(\sum\limits_{i=1}^n\bA_i\right)^\top\left(\sum\limits_{i=1}^n\bA_i\right)\right]= \mathbb{E}\left[\bA_1^\top\bA_1\right]\leq 4M^2s
\label{eq:D1}
\end{align}

It left for us to bound
\[
\frac{1}{n}\left(\sum\limits_{i=1}^n\bA_i\right)^\top\left(\sum\limits_{i=1}^n\bA_i\right)-\mathbb{E}\left[\frac{1}{n}\left(\sum\limits_{i=1}^n\bA_i\right)^\top\left(\sum\limits_{i=1}^n\bA_i\right)\right].
\]

Applying Theorem 3.4 in \cite{wainwright2019high} yields


\[
\mathbb{P}\left(\frac{1}{n}\left(\sum\limits_{i=1}^n\bA_i\right)^T\left(\sum\limits_{i=1}^n\bA_i\right)\geq\mathbb{E}\left[\frac{1}{n}\left(\sum\limits_{i=1}^n\bA\right)^T\left(\sum\limits_{i=1}^n\bA_i\right)\right]+t\right) \leq \exp\left(\frac{-t^2}{(32M^2s)^2}\right)
\]

Let $t=\gamma\mathbb{E}\left[\frac{1}{n}\left(\sum\limits_{i=1}^n\bA\right)^T\left(\sum\limits_{i=1}^n\bA_i\right)\right]$, then

\begin{align}
\mathbb{P}\left(\frac{1}{n}\left(\sum\limits_{i=1}^n\bA_i\right)^T\left(\sum\limits_{i=1}^n\bA_i\right)\geq(\gamma+1)\mathbb{E}\left[\frac{1}{n}\left(\sum\limits_{i=1}^n\bA\right)^T\left(\sum\limits_{i=1}^n\bA_i\right)\right]\right) \leq \exp\left(\frac{-\gamma^2}{64}\right)
\label{eq:D2}
\end{align}

Combining \eqref{eq:D1} and \eqref{eq:D2} and taking $\varepsilon_0=\exp(-\gamma^2/64)$ yields

\[
\mathbb{P}\left(\frac{1}{n}\left(\sum\limits_{i=1}^n\bA_i\right)^T\left(\sum\limits_{i=1}^n\bA_i\right)\geq4(\sqrt{-64\log \varepsilon_0}+1)M^2s\right) \leq \varepsilon_0.\]

This completes the proof together with \eqref{eq:D3}.

\end{section}

\begin{section}{{Proof of Proposition \ref{prop:decomposition2}}}

\begin{proof}
We write the penalized loss function as \[
\tilde{\cL}(\bbeta)+\lambda\norm{\bbeta}_1.\]

For ease of notation, we denote $\hat{\bbeta}=\ti{\bbeta}^2$ as the second-stage estimator in the proof of Proposition \ref{prop:decomposition2}.

We construct $\hat{\bbeta}^*=\bbeta^*+t(\hbbeta-\bbeta^*)$ and let $t$ be the largest $t\in(0,1)$ such that $\norm{\hat{\bbeta^*}-\bbeta^*}_1\leq r$. The way we construct $\hbbeta^*$ suggests that $t=1$ if $\norm{\hat{\bbeta}-\bbeta^*}_1\leq r$ and $t\in(0,1)$ otherwise. The construction ensures that $\hat{\bbeta}^*$ is within the strong convexity cone so that
\begin{equation}
\kappa_-\norm{\hat{\bbeta}^*-\bbeta^*}_2^2\leq \langle\nabla\tilde{\cL}(\hat{\bbeta}^*)-\nabla\tilde{\cL}(\bbeta^*),\hat{\bbeta}^*-\bbeta^*\rangle=:\cD_{\tilde{\cL}}^s(\hat{\bbeta}^*-\bbeta^*)\leq t\cD_{\tilde{\cL}}^s(\hat{\bbeta}-\bbeta^*).
\label{eq:cc}
\end{equation}

The first inequality in (\ref{eq:cc}) holds because $\ti{\bbeta}^1$ is $(\tilde{s}+2s)$-sparse and the sparsity level remains in the second stage according to Lemma \ref{lem:stepsparsity}.
The last inequality is proved in Lemma F.2 in \cite{fan2018lamm}. To bound $\cD_{\tilde{\cL}}^s(\hat{\bbeta}-\bbeta^*)$ from above,  let $h(\bbeta)=p_{\lambda}(\bbeta)-\lambda\norm{\bbeta}_1$.
\begin{align*}
\cD_{\tilde{\cL}}^s(\hbbeta,\bbeta^*)&=\langle\nabla\tilde{\cL}(\hat{\bbeta})+\lambda\hat{\bxi},\hat{\bbeta}-\bbeta^*\rangle-\langle\lambda\hbxi,\hat{\bbeta}-\bbeta^*\rangle-\langle\nabla\tilde{\cL}(\bbeta^*),\hat{\bbeta}-\bbeta^*\rangle \\
&=\langle\nabla\tilde{\cL}(\hat{\bbeta})+\lambda\hat{\bxi},\hat{\bbeta}-\bbeta^*\rangle-\langle\lambda\hat{\bxi},\hat{\bbeta}-\bbeta^*\rangle-\langle\nabla\cL(\bbeta^*)+h'(\bbeta),\hat{\bbeta}-\bbeta^*\rangle
\end{align*}

We write $\cD_{\tilde{\cL}}^s(\hat{\bbeta}^*,\bbeta^*)=\sum\limits_{j=1}^p\cD_{\tilde{\cL}}^s(\hat{\bbeta}^*,\bbeta^*)_j$ by breaking the inner products into the sum of entry-wise products.

\begin{enumerate}
\item When $j\in\{j: \abs{\beta^*_j}>a_1\lambda_j\}:=\cE_2$, $h'(\beta_j)=-\lambda_j\xi_j$ where $\xi_j\in\partial\abs{\beta_j}$.
\begin{equation}
\begin{split}
\cD_{\tilde{\cL}}^s(\hat{\bbeta},\bbeta^*)_j&=\langle\nabla\tilde{\cL}(\hat{\bbeta})_j+\lambda_j\hxi_j,\hat{\beta}_j-\beta^*_j\rangle-\langle\lambda_j\hxi_j,\hat{\beta}_j-\beta^*_j\rangle-\langle\nabla\cL(\bbeta^*)_j+h'(\beta^*_j),\hbeta_j-\beta^*_j\rangle\\
&=\langle\nabla\tilde{\cL}(\hat{\bbeta})_j+\lambda_j\hxi_j,\hat{\beta}_j-\beta^*_j\rangle+\langle -h'(\beta_j^*)-\lambda_j\hxi_j,\hat{\beta}_j-\beta^*_j\rangle-\langle\nabla\cL(\bbeta^*)_j-,\hbeta_j-\beta^*_j\rangle\\
&=\langle\nabla\tilde{\cL}(\hbbeta)_j+\lambda_j\hxi_j,\hbeta_j-\beta^*_j\rangle+\langle\lambda_j\xi_j^*-\lambda_j\hxi_j,\hat{\beta}_j-\beta^*_j\rangle-\langle\nabla\cL(\bbeta^*)_j,\hat{\beta}_j-\beta^*_j\rangle \\
&\leq \langle\nabla\tilde{\cL}(\hbbeta)_j+\lambda_j\hxi_j,\hbeta_j-\beta^*_j\rangle-\langle\nabla\cL(\bbeta^*)_j,\hat{\beta}_j-\beta^*_j\rangle
\end{split}
\label{eq:c2}
\end{equation}
The last inequality is due to $\langle\lambda_j\xi_j^*-\lambda_j\hxi_j,\hat{\beta}_j-\beta^*_j\rangle\leq 0$.

\item When $j\in\cE_1\cap S=\left\{j: 0<\abs{\beta^*_j}\leq a_1\lambda_j\right\}$, $p'_{\lambda}(\beta_j)\in[[0,\lambda\xi_j]]$ where $\xi_j\in\partial\abs{\beta_j}$. Here we use $[[x,y]]$ to denote $[\min(x,y),\max(x,y)]$ for $x,y\in\RR$. It follows that $h'(\beta)_j\in[[0,-\lambda_j\xi_j^*]]$ thus
\begin{align*}
\langle-\lambda_j\hxi_j-h'(\beta_j),\hbeta_j-\beta_j^*\rangle\leq &\max\left(\langle\lambda_j(\xi_j^*-\hxi_j),\hbeta_j-\beta_j^*\rangle,\langle-\lambda_j\hxi_j,\hbeta_j-\beta_j^*\rangle\right)\\
\leq &\max\left(0,\abs{\langle-\lambda_j,\hbeta_j-\beta_j^*\rangle}\right)=\abs{\langle\lambda_j,\hbeta_j-\beta_j^*\rangle}.
\end{align*}
Therefore,
\begin{equation}
\begin{split}
\cD_{\tilde{\cL}}^s(\hat{\bbeta},\bbeta^*)_j&\leq \langle\nabla\tilde{\cL}(\hat{\bbeta})_j+\lambda_j\hxi_j,\hat{\beta}_j-\beta^*_j\rangle +\abs{\langle\lambda_j,\hbeta_j-\beta_j^*\rangle}-\langle\nabla\cL(\bbeta^*)_j,\hat{\beta}_j-\beta^*_j\rangle\\
\end{split}
\label{eq:c3}
\end{equation}

\item When $j\in \cE_1\cap S^c=S^c=\{j: \abs{\beta^*_j}=0\}$, $h'(\bbeta)_j=0$.
\begin{equation}
\begin{split}
\cD_{\tilde{\cL}}^s(\hat{\bbeta},\bbeta^*)_j&=\langle\nabla\tilde{\cL}(\hat{\bbeta})_j+\lambda_j\hxi_j,\hat{\beta}_j-\beta^*_j\rangle-\langle\lambda_j\hxi_j,\hat{\beta}_j-\beta^*_j\rangle-\langle\nabla\cL(\bbeta^*)_j,\hat{\beta}_j-\beta^*_j\rangle\\
\end{split}
\label{eq:c4}
\end{equation}

\end{enumerate}

Sum up across all indices using (\ref{eq:c2}), (\ref{eq:c3}) and (\ref{eq:c4}),

\begin{align*}
\cD_{\tilde{\cL}}^s(\hat{\bbeta},\bbeta^*)=&\sum\limits_{j=1}^p\cD_{\tilde{\cL}}^s(\hat{\bbeta},\bbeta^*)_j\cdot(\mathds{1}_{j\in\cE_2}+\mathds{1}_{j\in\cE_1\cap S}+\mathds{1}_{j\in\cE_1\cap S^c}) \\
\leq & \langle\nabla\tilde{\cL}(\hat{\bbeta})+\lambda\hbxi,\hbbeta-\bbeta^*\rangle -\langle\nabla\cL(\bbeta^*),\hbbeta-\bbeta^*\rangle\\
&+\lambda\norm{(\hbbeta-\bbeta^*)_{\cE_1\cap S}}_1-\langle\lambda\hbxi_{S^c},(\hbbeta-\bbeta^*)_{S^c}\rangle
\end{align*}

We bound the four terms in the above inequality separately.

\begin{equation}
\begin{split}
&\langle\nabla\tilde{\cL}(\hat{\bbeta})+\lambda\hbxi,\hbbeta-\bbeta^*\rangle\\
=&\langle\nabla\tilde{\cL}(\hat{\bbeta})_{S}+\lambda\hbxi_{S},\hat{\bbeta}_{S}-\bbeta^*_{S}\rangle+\langle\nabla\tilde{\cL}(\hat{\bbeta})_{S^c}+(\lambda\hbxi)_{S^c},\hat{\bbeta}_{S^c}-\bbeta^*_{S^c}\rangle \\
\leq&\norm{\bu_{S}}_2\norm{(\hat{\bbeta}^*-\bbeta^*)_{S}}_2+\norm{\bu_{S^c}}_{\infty}\norm{(\hat{\bbeta}^*-\bbeta^*)_{S^c}}_1 \\
\leq& \varepsilon\sqrt{s}\cdot\norm{(\hat{\bbeta}^*-\bbeta^*)_{S}}_2+\varepsilon\cdot\norm{(\hat{\bbeta}^*-\bbeta^*)_{S^c}}_1
\end{split}
\label{eq:c7}
\end{equation}
The above inequality holds by taking $\inf$ over all $\hbxi\in\partial\norm{\hbbeta}_1$.

\begin{equation}
\langle\nabla\cL(\bbeta^*),\hbbeta-\bbeta^*\rangle\geq-\norm{\nabla\cL(\bbeta^*)_{S}}_2\cdot\norm{(\hat{\bbeta}^*-\bbeta^*)_{S}}_2-\norm{\nabla\cL(\bbeta^*)_{S^c}}_{\infty}\cdot\norm{(\hat{\bbeta}^*-\bbeta^*)_{S^c}}_1
\label{eq:c8}
\end{equation}

\begin{equation}
\label{eq:c9}
\lambda\norm{(\hbbeta-\bbeta^*)_{\cE_1\cap S}}_1\leq \lambda\sqrt{\abs{\cE_1\cap S}}\cdot\norm{(\hbbeta-\bbeta^*)_{\cE_1\cap S}}_2\leq \lambda\sqrt{\abs{\cE_1\cap S}}\cdot\norm{(\hbbeta-\bbeta^*)_{S}}_2
\end{equation}

Notice that $\langle\lambda\hbxi_{S^c},(\hat{\bbeta}-\bbeta^*)_{S^c}\rangle\geq 0$, we have
\begin{equation}
\langle\lambda\hbxi_{S^c},(\hbbeta-\bbeta^*)_{S^c}\rangle=\lambda\cdot\norm{(\hat{\bbeta}-\bbeta^*)_{S^c}}_1.
\label{eq:c10}
\end{equation}

Therefore, by (\ref{eq:c7}), (\ref{eq:c8}), (\ref{eq:c9}) and (\ref{eq:c10}),
\begin{equation}
\begin{split}
\cD_{\tilde{\cL}}^s(\hat{\bbeta},\bbeta^*)\leq& \left(\varepsilon\sqrt{s}+\norm{\nabla\cL(\bbeta^*)_{S}}_2+\lambda\sqrt{\abs{\cE_1\cap S}}\right)\cdot\norm{(\hat{\bbeta}-\bbeta^*)_{S}}_2\\
&-\left(\lambda-\norm{\nabla\cL(\bbeta^*)_{S^c}}_{\infty}-\varepsilon\right)\cdot\norm{(\hat{\bbeta}-\bbeta^*)_{S^c}}_1 \\
\leq&  \left(\varepsilon\sqrt{s}+\norm{\nabla\cL(\bbeta^*)_{S}}_2+\lambda\sqrt{\abs{\cE_1\cap S}}\right)\cdot\norm{(\hat{\bbeta}-\bbeta^*)_S}_2\\
\end{split}
\label{eq:c11}
\end{equation}

Combine (\ref{eq:cc}) and (\ref{eq:c11}), we have
\begin{equation}
\norm{\hat{\bbeta}^*-\bbeta^*}_2\leq \kappa_-^{-1}\left(\varepsilon\sqrt{s}+\norm{\nabla\cL(\bbeta^*)_{S}}_2+\lambda\sqrt{\abs{\cE_1\cap S}}\right)
\label{eq:c12}
\end{equation}

Since $\norm{\nabla\cL(\bbeta^*)}_{\infty}+\varepsilon\leq \lambda/2$, we have
\[
\norm{\hat{\bbeta}^*-\bbeta^*}_2\leq\kappa_-^{-1}\left(\lambda/2\cdot\sqrt{s}+\lambda\cdot\sqrt{s}\right)=\frac{3}{2}\lambda\sqrt{s}
\]
Using similar techniques as the proof of Proposition \ref{prop:contraction}, we show that $\norm{\hat{\bbeta}^*-\bbeta^*}_1\lesssim\lambda s<r$.
The definition of $\hbbeta^*$ suggests that $\hbbeta^*=\hbbeta$, since otherwise we have $\norm{\hbbeta^*-\bbeta^*}_1=r$. Therefore,  \eqref{eq:c12} becomes \[\norm{\hat{\bbeta}-\bbeta^*}_2\leq \kappa_-^{-1}\left(\varepsilon\sqrt{s}+\norm{\nabla\cL(\bbeta^*)_{S}}_2+\lambda\sqrt{\abs{\cE_1\cap S}}\right)\]

\end{proof}
\end{section}

\begin{section}{Proof of Theorem \ref{thm:strongoracle}}
\begin{proof}[Proof of Theorem \ref{thm:strongoracle}]
We prove the theorem in three steps: first we show that $\left\{\bbeta^k\right\}_{k=1}^{\infty}$ converges to the global optimum $\hbbeta$ in (\ref{eqloss}); then we prove that $\hbbeta=\widehat\bbeta^0$; finally, we illustrate that for $\bbeta^k$ sufficiently close to $\widehat\bbeta^0$, $\bbeta^{k+1}=\widehat \bbeta^0$.

According to Lemma \ref{lem:diff2} and Lemma \ref{lem:fg}, we know that
\begin{align*}
\norm{\bbeta^k-\bbeta^{k-1}}_2\leq\frac{2}{\phi}\left(F(\bbeta^k)-F(\hbbeta)\right)\leq\frac{2}{\phi}\left(1-\frac{1}{4\gamma_u\kappa}\right)^k\left(F(\ti{\bbeta}^{(2,0)})-F(\hbbeta)\right).
\end{align*}
Therefore $\left\{\bbeta^k\right\}_{k=1}^{\infty}$ converge as $k$ grows and $\left\{F(\bbeta^k)\right\}_{k=1}^{\infty}$ converges to $F(\hbbeta)$.

Then we show $\hbbeta=\widehat\bbeta^0$ with contradiction. Since $\norm{\nabla\cL(\widehat\bbeta^0)}_{\infty}<\lambda/2$ and $\lim\limits_{x\rightarrow 0}\abs{p'_{\lambda}(x)}=\lambda$, we have $\nabla F(\widehat\bbeta^0)=\nabla\cL(\widehat\bbeta^0)+p_{\lambda}(\widehat\bbeta^0)={\bf 0}$, 
which suggests that $\widehat\bbeta^0$ is a local minimizer of (\ref{eqloss}).
Using similar arguments in the proof of Proposition \ref{prop:decomposition2} and Proposition \ref{prop:stage1sparsity}, we derive that
\[\norm{\hbbeta}_0\leq\ti{s}\quad\text{and}\quad\norm{\hbbeta-\bbeta^*}_1\lesssim \lambda s\leq r,\] i.e. the global minimizer of (\ref{eqloss}) $\hbbeta$ is also within the sparsity cone $\cC(s+\ti{s},r)$ around $\bbeta^*$. Suppose that $\widehat\bbeta^0\neq\hbbeta$ and $F(\hbbeta)<F(\widehat\bbeta^0)$. The convexity of $F(\cdot)$
suggests that for any $\alpha\in(0,1)$, $F(\alpha\hbbeta+(1-\alpha)\widehat\bbeta^0)\leq\alpha F(\hbbeta)+(1-\alpha)F(\widehat\bbeta^0)<F(\widehat\bbeta^0)$. Take $\alpha$ sufficiently small, this contradicts with $F(\widehat\bbeta^0)$ being a local minimum. Hence, the global minimizer is unique and it is the same as the oracle estimator.

Finally, we need to show that the convergence in distance leads to strict equivalence. We want to show that there exists a sufficiently small $\delta>0$ such that if $\norm{\bbeta^{k-1}-\widehat\bbeta^0}_2\leq\delta$ then $\beta^k_j=0$ for $j\notin S$. Let $\delta_1=\inf\left\{x: p'_{\lambda}(x)/\lambda<2/3\right\}.$ The continuity of $p'_{\lambda}(\cdot)$ suggests that $\delta_1>0$. Take $\delta=\min\left(\delta_1,\gamma_u^{-1}\rho_+^{-1}\lambda/6\right)$.

Suppose $\norm{\bbeta^{k-1}-\widehat\bbeta^0}_2\leq\delta$ but there exists $j\in S^c$ such that $\bbeta_j^k\neq 0$, we must have
\begin{equation}
\abs{\beta^{k-1}_j-\frac{\nabla\ti{\cL}(\bbeta^{k-1})_j}{\phi}}>\frac{\lambda}{\phi}.
\label{eq:nonzero}
\end{equation}
Meanwhile,
\begin{equation}
\begin{split}
\norm{\nabla\ti{\cL}(\bbeta^{k-1})}_\infty=& \norm{\nabla\cL(\bbeta^{k-1})+p'_{\lambda}(\bbeta^{k-1})-\lambda\bxi^{k-1}}_\infty \\
\leq & \norm{\nabla\cL(\bbeta^{k-1})}_\infty+\norm{p'_{\lambda}(\bbeta^{k-1})-\lambda\bxi^{k-1}}_\infty \\
\leq & \frac{5\lambda}{6}
\end{split}
\label{eq:gradinfty}
\end{equation}
and
\begin{equation}
\abs{\beta^{k-1}_j}<\gamma_u^{-1}\rho_+^{-1}\lambda/6<\lambda/(6\phi).
\label{eq:closeness}
\end{equation}

\eqref{eq:gradinfty} and \eqref{eq:closeness} together contradict \eqref{eq:nonzero}, which means that for all $j\in S^c$ we have $\beta^k_j=0$. This also holds for all the steps after the $k$-th.

Therefore, an exact solution in the second stage would achieve the global minimum in the end.
 \end{proof}

\end{section}

\begin{section}{Proof of Theorem \ref{thm:tscomplexity}}

\begin{proof}[Proof of Theorem \ref{thm:tscomplexity}]
The proof for the first-stage complexity could be found in the proof of Proposition 4.5 in \cite{fan2018lamm}. Hence we only prove the second-stage complexity here. Apply Lemma \ref{lem:stopping2},
	we obtain 	\$
	\omega_{\lambda}(\btt^{k+1}) &\leq (1+\gamma_u)\rho_+\|\btt^{k+1}-\btt^{k}\|_2,
	\$
which, combining with Lemma \ref{lem:diff2}, yields
	\$
	\omega_{\blam}(\btt^{k+1})&\leq (1+\gamma_u)\rho_+\sqrt{\frac{2}{\phi}(F(\btt^{k})-F(\btt^{k+1}))}\\
	&\leq (1+\gamma_u)\rho_+\sqrt{2\rho_-^{-1}(F(\btt^{k})-F(\btt^{k+1}))},
	\$
where we use $\rho_-\leq \phi\leq \gamma_u\phi$ in the last inequality.
Since the sequence $\{F(\btt^{k})\}_{k=0}^\infty$ decrease monotonically, we obtain
	\$
	\omega_{\blam}(\btt^{k+1})&\!\leq\! (1+\gamma_u)\rho_+\sqrt{2\rho_-^{-1}(F(\btt^{k})\!-\!F(\tbt^2))}\\
&\!\leq\! (1+\gamma_u)\rho_+\sqrt{2\rho_-^{-1}\Big(1\!-\!\frac{1}{4\gamma_u\kappa}\Big)^k\left(F(\bbeta^{k})\!-\!F(\tbt^2)\right)}\\
&\!\leq\!(1+\gamma_u)\rho_+\sqrt{3\kappa_-^{-1}\rho_-^{-1}\Big(1-\frac{1}{4\gamma_u\kappa}\Big)^k\lambda^2s} \\
& \leq\sqrt{3}(1+\gamma_u)\kappa\sqrt{\Big(1-\frac{1}{4\gamma_u\kappa}\Big)^k\lambda^2s}
	\$
where $\kappa=\rho_+/\kappa_-$, the second inequality is due to Lemma \ref{lem:fg}, and the last one due to Lemma \ref{lem:3.15}.
Therefore, to ensure that $\btt^{k+1}$ satisfies $\omega_{\lambda}(\btt^{k+1})\leq \varepsilon$, it suffices to choose $k$ such that 
\$
\!\sqrt{3}(1+\gamma_u)\kappa\sqrt{\Big(1-\frac{1}{4\gamma_u\kappa}\Big)^k\lambda^2s}\leq \varepsilon.
\$
Equivalently, we obtain
\$
k\geq 2C'\log\Big(C''\frac{\lambda\sqrt{s}}{\varepsilon}\Big),
\$
where
$
C'=2/\log(4\gamma_u\kappa/\{4\gamma_u\kappa-1)\},~
C''=\sqrt{3}(1+\gamma_u)\kappa.
$
\end{proof}

\end{section}

\end{document}